\def\RSsubtxt{section~}\newref{sub}{name = \RSsubtxt}}
\def\RSthmtxt{theorem~}\newref{thm}{name = \RSthmtxt}}
\def\RSlemtxt{lemma~}\newref{lem}{name = \RSlemtxt}}
\numberwithin{equation}{section}
\numberwithin{figure}{section}
\theoremstyle{plain}
\newtheorem{thm}{\protect\theoremname}[section]
  \theoremstyle{remark}
  \newtheorem{rem}[thm]{\protect\remarkname}
  \theoremstyle{definition}
  \newtheorem{defn}[thm]{\protect\definitionname}
\theoremstyle{plain}
\newtheorem{obs}[thm]{Observation}
  \theoremstyle{plain}
  \newtheorem{fact}[thm]{\protect\factname}
  \theoremstyle{plain}
  \newtheorem{cor}[thm]{\protect\corollaryname}
  \theoremstyle{plain}
  \newtheorem{prop}[thm]{\protect\propositionname}
  \theoremstyle{plain}
  \newtheorem{lem}[thm]{\protect\lemmaname}
  \theoremstyle{remark}
  \newtheorem*{claim*}{\protect\claimname}
  \theoremstyle{remark}
  \newtheorem{claim}[thm]{\protect\claimname}
\newlist{thmlist}{enumerate}{5}
\setlist[thmlist]{label=(\arabic{thmlisti}), ref=\thethm~(\arabic{thmlisti}), noitemsep}
  \providecommand{\claimname}{Claim}
  \providecommand{\corollaryname}{Corollary}
  \providecommand{\definitionname}{Definition}
  \providecommand{\factname}{Fact}
  \providecommand{\lemmaname}{Lemma}
  \providecommand{\propositionname}{Proposition}
  \providecommand{\remarkname}{Remark}
\providecommand{\theoremname}{Theorem}
\begin{document}
\global\long\def\N{\mathbb{N}}
\global\long\def\Z{\mathbb{Z}}
\global\long\def\Q{\mathbb{Q}}
\global\long\def\R{\mathbb{R}}
\global\long\def\C{\mathbb{C}}
 \global\long\def\tg{\prime}
\global\long\def\tgg{\prime\prime}
 \global\long\def\cM{\mathcal{\mathcal{M}}}
\global\long\def\cN{\mathcal{N}}
\global\long\def\cZ{\mathcal{Z}}
\global\long\def\cQ{\mathcal{Q}}

\newcommand{\set}[1]{\left\{ {#1} \right\}}
\newcommand{\abs}[1]{\lvert {#1} \rvert}

\title{A new dp-minimal expansion of the integers}

\author{Eran Alouf and Christian d\textquoteright Elb\'ee}

\date{\today}
\begin{abstract}
We consider the structure $(\mathbb{Z},+,0,|_{p_{1}},\dots,|_{p_{n}})$,
where $x|_{p}y$ means $v_{p}(x)\leq v_{p}(y)$ and $v_p$ is the $p$-adic valuation. We prove that its
theory has quantifier elimination in the language $\{+,-,0,1,(D_{m})_{m\geq1},|_{p_{1}},\dots,|_{p_{n}}\}$ where $D_m(x)\leftrightarrow \exists y ~ my = x$,
and that it has dp-rank $n$. In addition, we prove that a first order
structure with universe $\mathbb{Z}$ which is an expansion of $(\mathbb{Z},+,0)$
and a reduct of $(\mathbb{Z},+,0,|_{p})$ must be interdefinable with
one of them. We also give an alternative proof for Conant's analogous
result about $(\mathbb{Z},+,0,<)$.
\end{abstract}

\maketitle


\section{Introduction}

The study of ``well-behaved'' expansions of $(\Z,+,0)$ is a recent
subject. Until not long ago, no examples of such structures were
studied, other than $(\Z,+,0,<)$. The first examples were given independently
by Palac\'{i}n and Sklinos \cite{Palacin2014} and by Poizat \cite{Poizat_2014}.
Specifically, they both proved, using different methods, that for
any integer $q\geq2$ the structure $(\Z,+,0,\prod_{q})$ is superstable
of $U$-rank $\omega$, where $\prod_{q}=\{q^{n}\,:\, n\in\N\}$.
Palac\'{i}n and Sklinos also showed the same result for other examples,
such as $(\Z,+,0,\mbox{Fac})$, where $\mbox{Fac}=\{n!\,:\, n\in\N\}$.
Conant \cite{Conant_Sparsity_2017} and Lambotte and Point \cite{LambottePoint2017}
independently generalized these results. For a subset $A\subseteq\Z$
with either an upper bound or a lower bound, they give some sparsity
conditions on $A$ which are sufficient for the structure $(\Z,+,0,A)$
to be superstable of $U$-rank $\omega$. Conant also gives sparsity
conditions which are necessary for the structure $(\Z,+,0,A)$ to
be stable.

A different kind of example was given recently by Kaplan and Shelah
in \cite{KaplanShelahPrimes2016}. They proved that for $\mbox{Pr}=\{p\in\Z\,:\,|p|\mbox{ is prime}\}$,
the structure $(\Z,+,0,\mbox{Pr})$ has the independence property
(and even the $n$-independence property for all $n$) hence it is unstable. On the other hand,  assuming Dickson\textquoteright s Conjecture
\footnote{A strong number-theoretic conjecture about primes in arithmetic progressions,
which generalizes Dirichlet\textquoteright s theorem on prime numbers.
}, it is supersimple of $U$-rank $1$.

In contrast to the above, $(\Z,+,0,<)$ remained the only known unstable
dp-minimal expansion of $(\Z,+,0)$. In \cite[Question 5.32]{Aschenbrenner_et_al_II_2013},
Aschenbrenner, Dolich, Haskell, Macpherson, and Starchenko ask $(\star)$
whether every dp-minimal expansion of $(\mathbb{Z},+,0)$ is a reduct
of $(\mathbb{Z},+,0,<)$. In \cite{Aschenbrenner_et_al_I_2015} the
same authors prove that $(\Z,+,0,<)$ has no proper dp-minimal expansions.
This was later strengthened by Dolich and Goodrick, which proved in
\cite{DolichGoodrick2015} that $(\Z,+,0,<)$ has no proper strong
expansions. Together with a result of Conant which we describe below
(Fact~\ref{conant_result_order}), this means that any other unstable dp-minimal
expansion of $(\mathbb{Z},+,0)$, if exists, is not a reduct, nor an expansion of $(\mathbb{Z},+,0,<)$. 

~

In the first part of this paper we introduce a new family of dp-minimal
expansions of $(\mathbb{Z},+,0)$, thus giving a negative answer to
the question $(\star)$ above. More generally, for every $n\in\N\cup\{\omega\}$
we introduce a family of expansions of $(\mathbb{Z},+,0)$ having
dp-rank $n$. For a prime number $p$, let $v_{p}:\mathbb{Z}\to\mathbb{N}\cup\{\infty\}$
be the $p$-adic valuation, namely, $v_{p}(a)=\mbox{sup}\{k\in\N\,:\, p^{k}|a\}$.
Let $\emptyset\neq P\subseteq\mathbb{N}$ be a (possibly infinite)
set of primes, and let $L_{P}$ be the language $\{+,0\}\cup\{|_{p}\,:\, p\in P\}$,
where each $|_{p}$ is a binary relation. We expand $(\mathbb{Z},+,0)$
to an $L_{P}$-structure $\cZ_{P}$ by interpreting $a|_{p}b$ as
$v_{p}(a)\leq v_{p}(b)$ for each $p\in P$. We denote $T_{P}:=Th(\cZ_{P})$.
For convenience, we enumerate $P$ by $P=\set{p_{\alpha}\,:\,\alpha<|P|}$,
and $p$ without a subscript usually denotes some $p\in P$. If $P=\{p\}$
we write $T_{p}$ instead of $T_{\{p\}}$, etc.

We first prove that $T_{P}$ eliminates quantifiers in a natural definitional expansion. Let $L_{P}^{E}=L_{P}\cup\set{-,1\}\cup\{D_{n}\,:\, n\geq1}$
where $-$ and $1$ are interpreted in the obvious way, and for each
$n\geq1$, $D_{n}$ is an unary relation symbol interpreted as $\{na\,:\, a\in\Z\}$. 
\begin{thm}
\label{thm:QE}For every nonempty set $P$ of primes, the theory $T_{P}$
eliminates quantifiers in the language $L_{P}^{E}$.
\end{thm}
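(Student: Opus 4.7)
The plan is to use the standard one-step substructure extension test for QE: given models $M, N \models T_P$ with $N$ sufficiently saturated, a common $L_P^E$-substructure $A$, and $a \in M \setminus A$, produce $b \in N$ realizing the same quantifier-free $L_P^E$-type over $A$ as $a$.

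First I would note that, since $L_P^E$ contains $+, -, 0, 1$, every $L_P^E$-substructure is an abelian subgroup containing $\Z$, so the quantifier-free type of $a$ over $A$ is determined by three blocks of data: the $\Z$-linear equations $ka = c$ for $k \in \Z$, $c \in A$; the Presburger divisibilities $D_n(ka + c)$ and their negations; and the valuation comparisons $(k_{1} a + c_{1}) |_p (k_{2} a + c_{2})$ for $p \in P$. The linear block either forces $ka = c$ for some minimal $k \geq 1$ (so $b$ must be a prescribed $k$-th root in $N$) or places no equational constraint on $a$, and in either case one can make a preliminary choice of $b$ matching this block.

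The real content is the joint realization of the divisibility and valuation blocks. Using the ultrametric identity $v_p(x + y) = \min\set{v_p(x), v_p(y)}$ when $v_p(x) \neq v_p(y)$, the valuation comparisons at a given prime $p$ reduce to prescribing the values $v_p(b - d)$ for finitely many $d \in A$, a ``$p$-adic cut'' condition on $b$. The equivalence $D_{p^j}(t) \iff p^j |_p t$ absorbs the divisibility block at primes $p \in P$ into the valuation block, while divisibility by primes $q \notin P$ contributes pure congruence conditions orthogonal to all $|_p$-data.

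The main obstacle will be the simultaneous realizability of this combined system in $N$. Across distinct primes, independence should follow from the Chinese Remainder Theorem (prime-power residues at distinct primes are jointly free), reducing matters to the single-prime case. For one prime $p$, realizing a consistent $p$-adic cut over $A$ in a saturated $N$ should follow from appropriate density and richness properties of models of $T_P$ --- namely, that the ``valuation scale'' at $p$ is densely ordered and unbounded, and that arbitrary residue classes modulo prime powers are attained. The technical heart of the argument is writing down these properties as consequences of $T_P$ and verifying that they guarantee realizability of every consistent quantifier-free type, which then yields the desired $b \in N$ and completes the QE test.
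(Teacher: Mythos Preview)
Your overall framework is sound and close in spirit to the paper's, but there is a concrete error in the single-prime step. You claim that realizing the $p$-adic cut will follow from the ``valuation scale at $p$ being densely ordered and unbounded.'' This is false: the value set $\Gamma_p$ is (an elementary extension of) $(\mathbb{N},<,0,S,\infty)$, so it is \emph{discrete}, not dense. Arguments that go through density of the value group (as one would use for, say, $\mathbb{Q}_p$ or an algebraically closed valued field with divisible value group) do not apply here, and your sketch gives no replacement mechanism.

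What actually does the work --- and what the paper isolates explicitly as an axiom --- is the combinatorial fact that every ball of radius $\gamma$ decomposes into exactly $p^k$ disjoint sub-balls of radius $\gamma+k$. This gives the pigeonhole argument you need: a formula of the shape $v_p(x-a_0)\geq\gamma_0 \wedge \bigwedge_{i=1}^n v_p(x-a_i)<\gamma_i$ is satisfiable once each $\gamma_i - \gamma_0 \geq n$, because the outer ball splits into $p^n > n$ sub-balls and the $n$ forbidden balls cannot cover them all. The paper packages this into a sequence of normal-form lemmas (reducing to one lower bound per prime, eliminating redundant upper bounds, counting the remaining sub-balls) and a final existence lemma combining CRT across primes with this ball-counting at each prime. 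Your CRT intuition for the cross-prime step is correct, and your absorption of $D_{p^j}$ into the valuation block is exactly what the paper does; but the heart of the argument is this finite combinatorics of balls, not density.

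A minor stylistic difference: the paper uses the ``two models with common substructure, transfer a single existential'' version of the test and works formula-by-formula, rather than your saturated-extension type-realization version. These are equivalent, so this is not a gap, only a matter of bookkeeping.
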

After proving this we were informed that a similar result has been
proved independently by François Guignot \cite{Guignot2012}, and
again by Nathanaël Mariaule \cite[Corollary 2.11]{Mariaule}.

~

Using quantifier elimination, we are able to determine the dp-rank
of $T_{P}$.
\begin{thm}
\label{thm:our_result_dp_rank}For every nonempty set $P$ of primes,
$\mbox{dp-rank}(T_{P})=|P|$.
\end{thm}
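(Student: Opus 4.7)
The equality $\mathrm{dp\text{-}rank}(T_P)=|P|$ breaks into a lower bound and a matching upper bound.

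\textbf{Lower bound.} The plan is to exhibit an ict-pattern of depth $|P|$. For each $\alpha<|P|$ take the formula
\[
\phi_\alpha(x;y)\;:=\;(x\,|_{p_\alpha}\,y)\wedge(y\,|_{p_\alpha}\,x),
\]
which expresses $v_{p_\alpha}(x)=v_{p_\alpha}(y)$, together with the row $b_{\alpha,j}:=p_\alpha^{j}$ for $j<\omega$. For any $\eta:|P|\to\omega$, the condition $\bigwedge_\alpha v_{p_\alpha}(x)=\eta(\alpha)$ is finitely consistent: on each finite subset this is solved by $x=\prod p_\alpha^{\eta(\alpha)}$, so by compactness a realisation exists in a sufficiently saturated model of $T_P$. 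Extracting a mutually indiscernible sub-array by a standard Ramsey argument gives the desired ict-pattern, hence $\mathrm{dp\text{-}rank}(T_P)\geq|P|$.

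\textbf{Upper bound.} The key inputs are \thmref{QE} and the fact that for a single prime $p$, the theory $T_p$ is dp-minimal (the base case $|P|=1$, which must be established separately, either in the paper or via a direct analysis of indiscernible sequences using the quantifier elimination). Suppose, toward a contradiction, that there are mutually indiscernible sequences $I_0,\dots,I_{|P|}$ and a parameter $a$ with no $I_k$ being $a$-indiscernible. For each $\alpha<|P|$, restrict to the reduct whose language is $L_{\mathrm{add}}^E\cup\{|_{p_\alpha}\}$: its theory is a conservative expansion of $T_{p_\alpha}$, hence dp-minimal, so at most one of the $I_k$ can fail to be $a$-indiscernible in this reduct. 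Writing $S_\alpha$ for the set of indices that succeed in the $|_{p_\alpha}$-reduct, we have $|S_\alpha|\geq|P|$, and since at most $|P|$ indices are excluded from a set of size $|P|+1$, the intersection $\bigcap_{\alpha<|P|}S_\alpha$ is nonempty. Pick $k^*$ in this intersection.

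I claim $I_{k^*}$ is $a$-indiscernible in $T_P$, contradicting the assumption. By \thmref{QE}, every formula is a Boolean combination of atomic $L_P^E$-formulas, and each atomic formula lies in $L_{\mathrm{add}}^E$ or in exactly one $L_{\mathrm{add}}^E\cup\{|_{p_\alpha}\}$. Since $I_{k^*}$ is $a$-indiscernible in each of these reducts, the truth value of every atomic formula is constant along increasing tuples from $I_{k^*}$ with parameter $a$; by QE this transfers to every formula, yielding full $a$-indiscernibility.

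\textbf{Main obstacle.} The central difficulty is the dp-minimality of $T_p$ for a single prime, which does not follow from the combinatorial pigeonhole argument and requires its own analysis based on the QE and a study of types over models in the $|_p$-reduct. Once that input is available, the ``pigeonhole + language-decomposition via QE'' step above is essentially formal. A secondary subtlety arises when $|P|=\omega$: the arithmetic ``at most $|P|$ failures among $|P|+1$ sequences'' must be replaced by a finitary version (since $\mathrm{dp\text{-}rank}$ is a supremum over finite ict-patterns), but the same scheme applies after passing to finite subsets of $P$.
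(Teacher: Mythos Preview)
Your approach is essentially the paper's: the lower bound via the ict-pattern with $\phi_\alpha(x,y)\equiv v_{p_\alpha}(x)=v_{p_\alpha}(y)$, and the upper bound by combining dp-minimality of each $T_{p_\alpha}$ with the language decomposition coming from \thmref{QE} (the paper isolates this second step as a general subadditivity lemma, \lemref{subadditivity_of_dp-rank_with_respect_to_unions_of_non-interacting_languages}, whose proof is the same pigeonhole you give). Two small corrections are worth making. First, in the lower bound the Ramsey extraction is unnecessary: the definition of an ict-pattern does not require the rows to be mutually indiscernible, so once you have the $b_{\alpha,j}$ and the realisations $a_\eta$ you are done. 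Second, for $|P|=\omega$ your proposed fix of ``passing to finite subsets of $P$'' does not give what you need (it only bounds the dp-rank of the finite reducts, not of $T_P$ itself); the correct argument, and the one the paper gives, is to start with $|P|^+=\omega_1$ mutually indiscernible sequences and run the very same pigeonhole: each sequence that fails to be $a$-indiscernible is witnessed by an atomic formula lying in some single $L_{p_\alpha}^E$, and since there are only $\omega$ many $\alpha$'s but $\omega_1$ many sequences, some sequence must be $a$-indiscernible in every reduct, hence in $T_P$ by quantifier elimination.
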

In particular, for a single prime $p$ we have that $T_p$ is dp-minimal, i.e. $\mbox{dp-rank}(T_{p})=1$.

~

~

We now move to our second result. We first give some context and history.
\begin{defn}
Let $L_{1}$ and $L_{2}$ be two first-order languages, and let $\cM_{1}$
be an $L_{1}$-structure and $\cM_{2}$ an $L_{2}$-structure, both
with the same underlying universe $M$. Let $A\subseteq M$ be a set
of parameters.
\begin{enumerate}
\item We say that $\cM_{1}$ is an \textbf{$A$}-\textbf{reduct} of $\cM_{2}$,
and $\cM_{2}$ is an \textbf{$A$}-\textbf{expansion} of $\cM_{1}$,
if for every $n\geq1$, every subset of $M^{n}$ which is $L_{1}$-definable
over $\emptyset$ (equivalently, over $A$) is also $L_{2}$-definable
over $A$. When $A=M$ we just say that $\cM_{1}$ is a\textbf{ reduct}
of $\cM_{2}$, and $\cM_{2}$ is an\textbf{ expansion} of $\cM_{1}$.
We will mostly use this with either $A=\emptyset$ or $A=M$.
\item We say that $\cM_{1}$ and $\cM_{2}$ are \textbf{$A$}-\textbf{interdefinable}
if $\cM_{1}$ is an \textbf{$A$}-reduct of $\cM_{2}$ and $\cM_{2}$
is an \textbf{$A$}-reduct of $\cM_{1}$. When $A=M$ we just say
that $\cM_{1}$ and $\cM_{2}$ are \textbf{interdefinable. }
\item Let $A\subseteq B\subseteq M$ be another set of parameters. We say
that $\cM_{1}$ is a\textbf{ $B$}-\textbf{proper $A$}-\textbf{reduct}
of $\cM_{2}$, and $\cM_{2}$ is a\textbf{ $B$}-\textbf{proper $A$}-\textbf{expansion}
of $\cM_{1}$, if $\cM_{1}$ is an \textbf{$A$}-reduct of $\cM_{2}$,
but $\cM_{2}$ is not a \textbf{$B$}-reduct of $\cM_{1}$. When $B=M$
we just say \textbf{proper} instead of \textbf{$B$}-proper. We will
mostly use this with either $B=M$ or $B=\emptyset$.
\end{enumerate}
\end{defn}
~

Let $\cM_{1}$ be an $L_{1}$-structure and $\cM_{2}$ an $L_{2}$-structure,
both with the same underlying universe $M$, and suppose that $\cM_{1}$
is a \textbf{$\emptyset$}-reduct of $\cM_{2}$. Then we can replace
$L_{2}$ by $L_{2}\cup L_{1}$, interpreting each $L_{1}$-symbol
in $\cM_{2}$ as it is interpreted in $\cM_{1}$. As we have not added
new $\emptyset$-definable sets, this new structure is \textbf{$\emptyset$}-interdefinable
with the original $\cM_{2}$. Therefore we may always assume for simplicity
of notation that $L_{1}\subseteq L_{2}$ and $\cM_{1}=\cM_{2}|_{L_{1}}$.

~

\textbf{$A$}-reducts are preserved by elementary extensions and elementary
substructures containing $A$, in the following sense:
\begin{obs}
\label{obs:reducts_preserved_by_elementary_substructures}Let $\cM\prec\cN$
be two $L$-structures with universes $M$ and $N$ respectively.
Let $A\subseteq M$ and let $\cN^{\tg}$ be an \textbf{$A$}-reduct
of $\cN$ with language $L^{\tg}$. Let $\cM^{\tg}$ be the structure
obtained by restricting the relations and functions of $\cN^{\tg}$
to $M$. Then: 
\begin{enumerate}
\item \label{enu:reducts_preserved_by_elementary_substructures__elementary_substructure}$\cM^{\tg}$
is well-defined, it is an \textbf{$A$}-reduct of $\cM$, and $\cM^{\tg}\prec\cN^{\tg}$. 
\item \label{enu:A-properness_preserved}$\cN^{\tg}$ is an \textbf{$A$}-proper
\textbf{$A$}-reduct of $\cN$ if and only if $\cM^{\tg}$ is an \textbf{$A$}-proper
\textbf{$A$}-reduct of $\cM$.
\item \label{enu:properness_preserved}$\cN^{\tg}$ is a proper \textbf{$A$}-reduct
of $\cN$ if and only if $\cM^{\tg}$ is a proper \textbf{$A$}-reduct
of $\cM$.
\end{enumerate}
\end{obs}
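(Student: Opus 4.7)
\emph{Translation lemma.} The basic tool is that since $\cN^{\tg}$ is an $A$-reduct of $\cN$, for each $L^{\tg}$-formula $\psi(\bar x)$ there exist an $L$-formula $\psi^{*}(\bar x,\bar u)$ and a tuple $\bar a_{\psi}\in A$ with $\psi(\cN^{\tg})=\psi^{*}(\cN,\bar a_{\psi})$ as subsets of $N^{|\bar x|}$. Because $\bar a_{\psi}\in A\subseteq M$, the elementarity $\cM\prec\cN$ also gives $\psi(\cN^{\tg})\cap M^{|\bar x|}=\psi^{*}(\cM,\bar a_{\psi})$, so the translation is valid on both the $\cN$ side and the $\cM$ side.

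\emph{Part (1).} Well-definedness of $\cM^{\tg}$ reduces, via the translation lemma applied to the graph of each function symbol of $L^{\tg}$, to transferring a uniqueness formula $\forall\bar x\,\exists!y\,\phi(\bar x,y,\bar a)$ from $\cN$ to $\cM$, which holds by $\cM\prec\cN$ since $\bar a\in A\subseteq M$; this shows $M$ is closed under the functions of $L^{\tg}$. Next I would prove $\cM^{\tg}\prec\cN^{\tg}$ by induction on $L^{\tg}$-formulas. The atomic and Boolean cases are immediate because the interpretations in $\cM^{\tg}$ are literal restrictions of those in $\cN^{\tg}$. The inductive step $\phi=\exists y\,\psi(\bar x,y)$ is where the main work occurs: the translation lemma rewrites the existence of a witness in $\cN^{\tg}$ as the $L$-statement $\exists y\,\psi^{*}(\bar b,y,\bar a_{\psi})$ with all parameters in $M$, which transfers to $\cM$ by $\cM\prec\cN$; the witness $c\in M$ thus obtained satisfies $\cN^{\tg}\models\psi(\bar b,c)$, hence $\cM^{\tg}\models\psi(\bar b,c)$ by the induction hypothesis. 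Finally, that $\cM^{\tg}$ is an $A$-reduct of $\cM$ is immediate: a $\emptyset$-definable set of $\cM^{\tg}$, defined by $\psi\in L^{\tg}$, coincides on $M$ with $\psi^{*}(\cM,\bar a_{\psi})$ by the translation lemma combined with $\cM^{\tg}\prec\cN^{\tg}$.

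\emph{Part (2).} I would prove both directions by the same move. Suppose a set $\phi(\cN,\bar a)$, with $\phi\in L$ and $\bar a\in A$, equals $\psi(\cN^{\tg},\bar b)$ for some $\psi\in L^{\tg}$ and $\bar b\in A$. The translation lemma yields the $L$-identity $\forall\bar x(\phi(\bar x,\bar a)\leftrightarrow\psi^{*}(\bar x,\bar b,\bar a_{\psi}))$ in $\cN$ with all parameters in $A\subseteq M$, which transfers between $\cN$ and $\cM$ by $\cM\prec\cN$. Consequently $\phi(\cN,\bar a)$ is $A$-definable in $\cN^{\tg}$ if and only if $\phi(\cM,\bar a)$ is $A$-definable in $\cM^{\tg}$, which is exactly (2).

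\emph{Part (3).} If $\cM$ is an $M$-reduct of $\cM^{\tg}$, then for each $\phi(\bar x,\bar y)\in L$ there are $\psi(\bar x,\bar y,\bar z)\in L^{\tg}$ and $\bar c\in M$ with $\phi(\cM)=\psi(\cM^{\tg},\bar c)$; the translation lemma plus $\cM\prec\cN$ lifts this identity into $\cN$ with the same parameter $\bar c\in M\subseteq N$, showing that $\cN$ is an $N$-reduct of $\cN^{\tg}$. Conversely, if $\cN$ is an $N$-reduct of $\cN^{\tg}$, the analogous witness $\bar c$ only lies in $N$; the translation lemma produces the $L$-sentence $\exists\bar z\,\forall\bar x\,\forall\bar y\,(\phi(\bar x,\bar y)\leftrightarrow\psi^{*}(\bar x,\bar y,\bar z,\bar a_{\psi}))$ whose only parameters $\bar a_{\psi}$ lie in $A\subseteq M$, and this sentence transfers from $\cN$ down to $\cM$ by elementarity, yielding a witness $\bar c^{\tg}\in M$ that exhibits $\phi(\cM)$ as $M$-definable in $\cM^{\tg}$. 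The main obstacle in the entire observation is the existential step of Part (1); (2) and (3) are variations on the same translation-plus-elementarity theme, with (3) using an extra existential quantifier to bring witnesses down into $M$.
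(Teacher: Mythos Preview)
Your proof is correct. The paper states this result as an observation without supplying a proof, so there is nothing to compare against; your translation-lemma-plus-elementarity approach is the natural way to verify all three parts, and your treatment of the existential step in Part~(1) and the extra existential quantifier in Part~(3) (to pull the witness $\bar c\in N$ down into $M$) is exactly what is needed.
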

\begin{rem}
\obsref{reducts_preserved_by_elementary_substructures} is not necessarily
true if $A\not\subseteq M$. If $\cN^{\tg}$ contains a constant $c\notin M$,
or a $n$-ary function $f$ such that $f(M^{n})\not\subseteq M$,
then $\cM^{\tg}$ is not well-defined. Even when it is well-defined,
the rest is still not necessarily true. For example, let $\cM=(\mathbb{Z},+,0,1,<)$,
and let $\cN=(N,+,0,1,<)$ be a nontrivial elementary extension of
$\cM$. Let $b\in N$ be a positive infinite element, and let $\cN^{\tg}=(N,+,0,1,[0,b])$.
Then $\cM^{\tg}=(\mathbb{Z},+,0,1,\N)\not\prec\cN^{\tg}$ (as $[0,b]$
contains an element $x=b$ such that $x\in[0,b]$ but $x+1\notin[0,b]$).
Also, $\cM^{\tg}$ is interdefinable with $\cM$, but we will see
that $\cN^{\tg}$ is a proper reduct of $\cN$.
\end{rem}

\begin{defn}
Let $\mathcal{F}$ be a family of first-order structures, and let
$\cM\in\mathcal{F}$. We say that $\cM$ is \textbf{$A$}-\textbf{minimal}
in $\mathcal{F}$ if there are no \textbf{$A$}-proper \textbf{$A$}-reducts
of $\cM$ in $\mathcal{F}$. We say that $\cM$ is \textbf{$A$}-\textbf{maximal}
in $\mathcal{F}$ if there are no \textbf{$A$}-proper \textbf{$A$}-expansions
of $\cM$ in $\mathcal{F}$. When $A=M$ we just say that $\cM$ is
\textbf{minimal} or \textbf{maximal}, respectively.
\end{defn}
An example of this phenomenon was given by Pillay and Steinhorn,
who proved in \cite{Pillay_1987} that $(\N,<)$ has no proper $o$-minimal
expansions, i.e., it is a maximal $o$-minimal structure. Another
example was given by Marker, who proved in \cite{Marker_1990} that
if $\cN$ is a \textbf{$\emptyset$}-expansion of $(\C,+,\cdot,0,1)$
and a reduct of $(\C,+,\cdot,0,1,\R)$, then $\cN$ is interdefinable
with either $(\C,+,\cdot,0,1)$ or $(\C,+,\cdot,0,1,\R)$, i.e., $(\C,+,\cdot,0,1,\R)$
is minimal among the proper expansions of $(\C,+,\cdot,0,1)$. A much
more recent example, given by Dolich and Goodrick in \cite{DolichGoodrick2015},
was already mentioned above: $(\Z,+,0,<)$ has no proper strong expansions,
i.e., it is maximal among the strong structures\footnote{For a more general example, by Zorn's Lemma, every stable structure
$\cM$ has an expansion which is maximal among the stable expansions
of $\cM$. And as stability is preserved under non-proper expansions,
this maximal expansion may be chosen to be a \textbf{$\emptyset$}-expansion.
Similarly, for every $n\geq1$, by Zorn's Lemma, every stable structure
$\cM$ of dp-rank $n$ has an expansion which is maximal among the
stable expansions of $\cM$ of dp-rank $n$.}.

A concrete example to an even stronger phenomenon was recently given.
Based on a result by Palac\'{i}n and Sklinos \cite{Palacin2014}, Conant
and Pillay proved in \cite{ConantPillay2016} the following:
\begin{fact}[\cite{ConantPillay2016}~Theorem 1.2]
\label{fact:conant_pillay_stable_expansions_finite_dp_rank}$(\Z,+,0,1)$
has no proper stable expansions of finite dp-rank.
\end{fact}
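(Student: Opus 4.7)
The plan is to argue by contradiction. Suppose $\mathcal{M}$ is a proper stable expansion of $(\mathbb{Z},+,0,1)$ of finite dp-rank. Since in a stable theory dp-rank dominates weight, $\mathcal{M}$ is superstable of finite weight. The goal is then to show that every $\emptyset$-definable set of $\mathcal{M}$ is already $\emptyset$-definable in $(\mathbb{Z},+,0,1)$, contradicting properness.

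The first step is to classify $\mathcal{M}$-definable subgroups. Every subgroup of $\mathbb{Z}$ in the standard model is of the form $m\mathbb{Z}$ and hence reduct-definable; I would lift this to the monster model using superstability, specifically the chain condition on intersections of uniformly definable subgroups. The higher-power case $\mathbb{Z}^n$ reduces to $n=1$ by induction via projections and direct-sum decompositions, using that $\mathbb{Z}^n$ has a well-understood lattice of subgroups already visible in the reduct. The outcome is that connected components of $\mathcal{M}$-definable subgroups coincide with their reduct analogs.

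The second step applies the structure theory of stable groups. For any $\emptyset$-definable set $X\subseteq \mathbb{Z}^n$ in $\mathcal{M}$, each generic type $p$ over a small model has a connected stabilizer $\mathrm{Stab}(p)$ which, by the previous step, is a reduct-definable subgroup. Standard arguments then describe $X$ generically as a finite Boolean combination of cosets of such stabilizers, producing a reduct-definable set $Y$ that agrees with $X$ up to a non-generic ``small'' set $X\triangle Y$.

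The main obstacle, and where finite dp-rank is used beyond mere superstability, is showing that the discrepancy $X\triangle Y$ is actually empty. If it were not, translating $X\triangle Y$ along $\mathbb{Z}$ would produce an infinite family of pairwise-incompatible definable sets from which one extracts an ict-pattern of depth exceeding the dp-rank of $\mathcal{M}$, a contradiction. Carrying out this final extraction cleanly, in the vein of the weight computations of Palac\'{i}n--Sklinos for $(\mathbb{Z},+,0,1,\prod_{q})$, is the technical heart of the argument.
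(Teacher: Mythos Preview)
The paper does not prove this statement at all: it is recorded as a \emph{Fact} with a citation to Conant--Pillay, and is used as a black box in the proofs of \corref{conant_result_order_elementary_extension_version} and \thmref{our_result_full_p_adic_elementary_extension_version}. So there is no ``paper's own proof'' to compare your proposal against.

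That said, your sketch does not match the actual Conant--Pillay argument, and it has a real gap. Conant and Pillay proceed as follows: stable of finite dp-rank implies superstable (Shelah); then the Palac\'{i}n--Sklinos result shows that any superstable expansion of $(\mathbb{Z},+,0)$ is $1$-based; in a $1$-based stable group every definable set is a boolean combination of cosets of $\mathrm{acl}(\emptyset)$-definable subgroups (Hrushovski--Pillay); and finally all subgroups of $\mathbb{Z}^n$ are already Presburger-definable. The crux is $1$-basedness, which you never invoke. Without it, your step~2 does not guarantee that the stabilizer of a generic type has finite index (it could be trivial), so the ``$X$ agrees with a reduct-definable $Y$ up to a non-generic set'' conclusion in step~3 is unjustified. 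Your step~4 is also not convincing as stated: translating a non-generic set along $\mathbb{Z}$ does not by itself produce an ict-pattern of arbitrary depth---a non-generic definable set can still be complicated, and there is no mechanism given for why the translates would witness alternation in the way an ict-pattern requires. The clean route is to use $1$-basedness to eliminate the discrepancy entirely rather than to bound it via dp-rank afterward.
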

In other words, $(\Z,+,0,1)$ is maximal among the stable structures
of finite dp-rank. This theorem is no longer true if we replace $(\Z,+,0,1)$
by an elementarily equivalent structure $(N,+,0,1)$. Let $(N,+,0,1,|_{p})$
be a nontrivial elementary extension of $(\mathbb{Z},+,0,1,|_{p})$,
let $b\in N$ be such that $\gamma:=v_{p}(b)$ is nonstandard, and let
$B=\{a\in N\,:\, b|_{p}a\}=\{a\in N\,:\, v_{p}(a)\geq\gamma\}$. Then
$(N,+,0,1,B)$ is a proper expansion of $(N,+,0,1)$ of dp-rank $1$,
and in \propref{counterexample_stable_dp_minimal_expansion_of_N_equiv_Z}
we show that it is also stable. 

As $(\mathbb{Z},+,0,<)$ is dp-minimal, an immediate consequence of
the above is that there are no stable structures which are both proper
expansions of $(\Z,+,0)$ and proper reducts of $(\mathbb{Z},+,0,<)$.
In \cite{Conant_No_Intermediate_2016} Conant strengthened this result
by proving that there are no structures \emph{at all} which are both
proper expansions of $(\Z,+,0)$ and proper reducts of $(\mathbb{Z},+,0,<)$.
Again, by interdefinability, we may replace $(\Z,+,0)$ by $(\Z,+,0,1)$
and $(\mathbb{Z},+,0,<)$ by $(\mathbb{Z},+,0,1,<)$. So we have:
\begin{fact}[\cite{Conant_No_Intermediate_2016}~Theorem 1.1]
\label{conant_result_order}$(\mathbb{Z},+,0,1,<)$ is minimal
among the proper expansions of $(\mathbb{Z},+,0,1)$.
\end{fact}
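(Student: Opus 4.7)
The plan is to show that any structure $\cN$ strictly between $(\Z,+,0,1)$ and $(\Z,+,0,1,<)$ must define $<$. Concretely, starting from an $\cN$-definable set (with parameters) that is not $(\Z,+,0,1)$-definable, I will produce an $\cN$-definable copy of $\N$ (with parameters), and then recover $<$ via $y<z \iff z-y \in \N\setminus\{0\}$. The first step is a reduction to the one-dimensional case: from a definable $X \subseteq \Z^k$ witnessing non-$(\Z,+,0,1)$-definability, one extracts a line restriction $\{t \in \Z : \bar a + t \bar b \in X\}$, for suitable $\bar a, \bar b \in \Z^k$, which is an $\cN$-definable subset of $\Z$ that is not $(\Z,+,0,1)$-definable.

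Now let $X\subseteq \Z$ be $\cN$-definable with parameters but not $(\Z,+,0,1)$-definable. By Presburger quantifier elimination, $X$ differs by a finite set from one of the form
\[
\{x > 0 : x \bmod n \in R^+\} \cup \{x < 0 : x \bmod n \in R^-\}
\]
for some $n \geq 1$ and $R^+, R^- \subseteq \Z/n\Z$. The $(\Z,+,0,1)$-definable subsets of $\Z$ with parameters are exactly those with $R^+ = R^-$, again up to finite modification, so $R^+ \neq R^-$; fix $a \in R^+ \setminus R^-$. Then $Y := X \cap (a + n\Z)$ is $\cN$-definable, and equals a one-sided ray $\{a + kn : k \geq k_0\}$ in $a + n\Z$ up to a finite symmetric difference.

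To recover $\N$, consider the $\cN$-definable set $Z := \{x \in \Z : Y + nx \subseteq Y\}$, where $Y + nx := \{y + nx : y \in Y\}$ is expressible because $n$ is a fixed natural number. When $Y$ is the ray exactly, a direct check gives $Z = \N$. In general the finitely many ``anomalies'' by which $Y$ differs from the exact ray perturb this equality on only finitely many $x$: for $x \geq 0$ sufficiently large, the finite exceptions of $Y$ land, after translation by $nx$, deep inside $Y$, yielding $Y + nx \subseteq Y$; for $x < 0$ sufficiently negative, the shifted ray escapes to $-\infty$ and cannot be absorbed by the finite exceptions of $Y$ on the negative side, forcing some element of $Y + nx$ to lie outside $Y$. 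Hence $Z \triangle \N$ is a finite set of specific integers, so it is $(\Z,+,0,1)$-definable with parameters, and therefore so is $\N = Z \triangle (Z \triangle \N)$, completing the proof.

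The main obstacle is the reduction to dimension one. The core combinatorial step of extracting $\N$ from $Y$ is the heart of the argument, but the passage from $\Z^k$ to $\Z$ needs a careful use of Presburger cell decomposition to guarantee that some \emph{line restriction} of $X$ witnesses the non-$(\Z,+,0,1)$-definability, rather than merely some fiber: the naive fiber reduction can fail, since for $X = \{(a,x)\in \Z^2 : 0 \leq x \leq a\}$ every fiber is $(\Z,+,0,1)$-definable with parameters yet $X$ itself is not.
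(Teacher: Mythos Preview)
Your one-dimensional argument is essentially correct and pleasant: once you have an $\cN$-definable $X\subseteq\Z$ that is not $(\Z,+,0,1)$-definable, intersecting with a suitable residue class and then forming $Z=\{x:Y+nx\subseteq Y\}$ does recover $\N$ up to a finite set. (One small slip: in your last sentence you conclude that $\N$ is $(\Z,+,0,1)$-definable; you mean $\cN$-definable, since $Z$ is only $\cN$-definable.) You should also note, for completeness, that if $R^+\setminus R^-=\emptyset$ one passes to $-X$ or to the complement before choosing $a$.

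The genuine gap is exactly where you say it is: the reduction from $\Z^k$ to $\Z$ via line restrictions. You assert that some line section $\{t:\bar a+t\bar b\in X\}$ must fail to be $(\Z,+,0,1)$-definable, and you correctly observe that fibers are not enough. But you do not prove this, and it is not a one-line consequence of cell decomposition; this is precisely the step that makes Conant's original combinatorial proof long. As written, your argument is a correct proof in dimension one together with a promissory note for the hard part.

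The paper takes a different route that avoids this combinatorial reduction entirely. It first observes that any intermediate $\cN$ is dp-minimal (as a reduct of Presburger). By \factref{conant_pillay_stable_expansions_finite_dp_rank} (Conant--Pillay), $\cN$ cannot be stable; hence $\cN$ is unstable. Then \factref{shelah_lemma} (stability is witnessed by formulas with $|x|=1$) together with \lemref{reduction_to_single_variable} produces, in a monster model, a one-variable instance $\phi(x,b)$ not definable in the group language. From there the paper manipulates $\phi(x,b)$ into a single infinite interval $[0,a]$ and defines $x\geq 0$ by quantifying over all parameters giving such intervals (Claims~\ref{claim:first_claim_end_proof_order} and~\ref{claim:second_claim_end_proof_order}). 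So the paper's dimension reduction is model-theoretic (stability dichotomy plus Shelah's criterion), trading the cell-decomposition analysis for two black boxes; your approach is more elementary in spirit but, as you acknowledge, leaves the substantive work undone.
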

Again, this is no longer true if we replace $(\mathbb{Z},+,0,1,<)$
by an elementarily equivalent structure. In private communication,
Conant mentioned the following possible counterexample: Let $(N,+,0,1,<)$
be a nontrivial elementary extension of $(\mathbb{Z},+,0,1,<)$, let
$b\in N$ be a positive nonstandard element, and let $B=[0,b]$. Then
$(N,+,0,1,B)$ is a proper expansion of $(N,+,0,1)$, and in \propref{counterexample_unstable_order_intermediate}
we show that it is indeed also a proper reduct of $(N,+,0,1,<)$.
Note that the formula $y-x\in B$ defines the ordering on $B$, so
this structure is unstable. We will see (\remref{any_order_intermediate_with_new_one_dimensional_set_is_unstable})
that every structure which is a proper expansion of $(N,+,0,1)$ and
a reduct of $(N,+,0,1,<)$, and which has a definable one-dimensional
set which is not definable in $(N,+,0,1)$, defines a set of the form $[0,b]$ for a positive nonstandard $b$. Hence a stable intermediate structure between $(N,+,0,1,<)$ and $(N,+,0,1)$, if such exists, cannot contain new definable
sets of dimension one.

Nevertheless, a weaker version of Fact~\ref{conant_result_order} does
hold as well for elementarily equivalent structures. As $(\mathbb{Z},+,0,1,<)$
is a \textbf{$\emptyset$}-expansion of $(\mathbb{Z},+,0,1)$, by
Fact~\ref{conant_result_order} it is obviously minimal among the proper
\textbf{$\emptyset$}-expansions of $(\mathbb{Z},+,0,1)$. In $(\mathbb{Z},+,0,1)$,
every element is $\emptyset$-definable, so a proper \textbf{$\emptyset$}-expansion
of $(\mathbb{Z},+,0,1)$ is the same as a \textbf{$\emptyset$}-proper
\textbf{$\emptyset$}-expansion of $(\mathbb{Z},+,0,1)$. Now if $\cN$
is a \textbf{$\emptyset$}-proper \textbf{$\emptyset$}-reduct of
$(\mathbb{Z},+,0,1,<)$, and a \textbf{$\emptyset$}-proper \textbf{$\emptyset$}-expansion
of $(\mathbb{Z},+,0,1)$, then also in $\cN$ every element is $\emptyset$-definable,
so $\cN$ is a proper reduct of $(\mathbb{Z},+,0,1,<)$. Hence $(\mathbb{Z},+,0,1,<)$
is \textbf{$\emptyset$}-minimal among the \textbf{$\emptyset$}-proper
\textbf{$\emptyset$}-expansions of $(\mathbb{Z},+,0,1)$. By \obsref{reducts_preserved_by_elementary_substructures},
we get:
\begin{cor}
\label{cor:conant_result_order_elementary_extension_version}Let $(N,+,0,1,<)$
be an elementary extension of $(\Z,+,0,1,<)$. Then $(N,+,0,1,<)$
is \textbf{$\emptyset$}-minimal among the \textbf{$\emptyset$}-proper
\textbf{$\emptyset$}-expansions of $(N,+,0,1)$.
\end{cor}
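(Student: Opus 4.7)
The plan is to obtain the corollary in two stages: first establish $\emptyset$-minimality for the standard model $(\Z,+,0,1,<)$ by reducing to Fact~\ref{conant_result_order}, and then transfer this property to the elementary extension $(N,+,0,1,<)$ via \obsref{reducts_preserved_by_elementary_substructures}.

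For the first stage, suppose $\cM_{0}$ is a $\emptyset$-proper $\emptyset$-reduct of $(\Z,+,0,1,<)$ that is also a $\emptyset$-proper $\emptyset$-expansion of $(\Z,+,0,1)$; the goal is to derive a contradiction with Fact~\ref{conant_result_order}. The crucial observation is that in $(\Z,+,0,1)$ every element is $\emptyset$-definable, since each integer is $\pm(1+\cdots+1)$. By the convention introduced just after the definition of $A$-reduct, we may assume that the language of $\cM_{0}$ contains $\{+,0,1\}$, so every element of $\Z$ is $\emptyset$-definable in $\cM_{0}$ as well. Consequently any subset of $\Z^{n}$ which is definable in $\cM_{0}$ with parameters is already $\emptyset$-definable in $\cM_{0}$, so the notions of ``proper'' and ``$\emptyset$-proper'' (as reduct of $(\Z,+,0,1,<)$ or expansion of $(\Z,+,0,1)$) coincide for $\cM_{0}$. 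Thus $\cM_{0}$ would be a proper reduct of $(\Z,+,0,1,<)$ and a proper expansion of $(\Z,+,0,1)$, contradicting Fact~\ref{conant_result_order}.

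For the second stage, I apply \obsref{reducts_preserved_by_elementary_substructures}(\ref{enu:A-properness_preserved}) with $A=\emptyset$, $\cM=(\Z,+,0,1,<)$, and $\cN=(N,+,0,1,<)$. If some structure $\cN^{\tg}$ were a $\emptyset$-proper $\emptyset$-reduct of $(N,+,0,1,<)$ which is also a $\emptyset$-proper $\emptyset$-expansion of $(N,+,0,1)$, then restricting its symbols to $\Z$ would produce, by the observation, a structure $\cM^{\tg}$ which is a $\emptyset$-proper $\emptyset$-reduct of $(\Z,+,0,1,<)$ and a $\emptyset$-proper $\emptyset$-expansion of $(\Z,+,0,1)$, contradicting the first stage.

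There is no real analytic obstacle; the entire argument is bookkeeping around the different notions of properness. The one subtlety worth flagging is why we must pass through the $\emptyset$-version of minimality rather than working directly with ordinary properness: in the elementary extension $(N,+,0,1)$ nonstandard elements are \emph{not} $\emptyset$-definable, so the equivalence between ``proper'' and ``$\emptyset$-proper'' that was used for $\Z$ no longer holds for $N$ (and indeed the example of $(N,+,0,1,[0,b])$ for nonstandard $b$ shows that the stronger statement of Fact~\ref{conant_result_order} genuinely fails over $N$). The role of \obsref{reducts_preserved_by_elementary_substructures}(\ref{enu:A-properness_preserved}) is precisely to carry $\emptyset$-properness, and only $\emptyset$-properness, between $\cM$ and $\cN$.
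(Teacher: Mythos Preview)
Your proof is correct and follows essentially the same approach as the paper's derivation of the corollary in the introduction: reduce $\emptyset$-properness to ordinary properness over $\Z$ using that every integer is $\emptyset$-definable in $(\Z,+,0,1)$, invoke Fact~\ref{conant_result_order}, and then transfer to $N$ via \obsref{reducts_preserved_by_elementary_substructures}. The paper also gives a second, self-contained proof later (in Section~5) that avoids quoting Conant's full Fact~\ref{conant_result_order}: it instead uses \thmref{conant_result_order_unstable_elementary_extension_version} to force any intermediate $\emptyset$-reduct to be stable, restricts to $\Z$ via \obsref{reducts_preserved_by_elementary_substructures}, and then contradicts \factref{conant_pillay_stable_expansions_finite_dp_rank}; but your argument matches the paper's primary derivation.
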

Conant's proof of Fact~\ref{conant_result_order} is very elementary
from a model-theoretic point of view. In particular, it does not use
\factref{conant_pillay_stable_expansions_finite_dp_rank}. On the
other hand, it is somewhat complicated, involving detailed analysis
of definable sets in arbitrary dimension. Conant asked whether this
theorem can be proved using model theoretic methods which incorporate
\factref{conant_pillay_stable_expansions_finite_dp_rank}. Here we
give such a proof. Utilizing a basic property of (un)stability, we
were able to prove minimality among unstable expansions by reducing
the problem to the one-dimensional case (in an elementary extension),
which is much easier.

Using the same reduction to dimension $1$, and additional technical
lemmas, we prove:
\begin{thm}
\label{thm:our_result_unstable_p_adic_elementary_extension_version}Let
$(N,+,0,1,|_{p})$ be an elementary extension of $(\Z,+,0,1,|_{p})$.
Then $(N,+,0,1,|_{p})$ is \textbf{$\emptyset$}-minimal among the
\uline{unstable} \textbf{$\emptyset$}-proper \textbf{$\emptyset$}-expansions
of $(N,+,0,1)$.
\end{thm}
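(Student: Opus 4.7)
The plan is to follow the strategy used for the order analogue (Fact~\ref{conant_result_order} and \corref{conant_result_order_elementary_extension_version}), exploiting unstability to reduce the problem to a one-dimensional classification, and then using \thmref{QE} to show that any genuinely new one-dimensional definable set must encode a $p$-adic ball, from which $|_{p}$ can be recovered. Let $\mathcal{N}'$ be an unstable $\emptyset$-proper $\emptyset$-expansion of $(N,+,0,1)$ which is a $\emptyset$-reduct of $(N,+,0,1,|_{p})$; the task is to prove that $|_{p}$ is $\emptyset$-definable in $\mathcal{N}'$.

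\textbf{Step 1 (reduction to dimension one).} Because $\mathcal{N}'$ is unstable, some $\emptyset$-definable formula $\phi(x,\bar{y})$ has the order property. Since $\mathcal{N}'$ is a reduct of the dp-minimal $(N,+,0,1,|_{p})$ (\thmref{our_result_dp_rank}), $\phi$ has dp-rank $1$, and a standard extraction argument, carried out in a sufficiently saturated elementary extension $(N^{*},+,0,1,|_{p})$ and by specializing $\bar{y}$, yields an $\mathcal{N}'$-definable set $X\subseteq N^{*}$ (over parameters) that is not definable in the Presburger reduct $(N^{*},+,0,1)$.

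\textbf{Step 2 (one-dimensional classification and extraction of balls).} By \thmref{QE}, every $(N^{*},+,0,1,|_{p})$-definable subset of $N^{*}$ is a Boolean combination of Presburger cosets $D_{m}(ax+c)$ and $|_{p}$-atoms $\{x : v_{p}(ax+c)\leq v_{p}(bx+d)\}$ with $a,b,c,d\in N^{*}$. A careful case analysis shows that, up to Presburger adjustment and translation, any genuinely new set must contain (or agree up to a Presburger-definable symmetric difference with) a $p$-adic ball $B_{\gamma,c_{0}}=\{x : v_{p}(x-c_{0})\geq\gamma\}$, with $\gamma$ necessarily nonstandard---otherwise $B_{\gamma,c_{0}}$ would already be a Presburger coset. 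Moreover, the infinite chain furnished by the order property upgrades a single ball to an $\mathcal{N}'$-definable family of concentric balls whose radii are cofinal in $v_{p}(N^{*})$.

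\textbf{Step 3 (recovering $|_{p}$ and transfer).} From such a cofinal family we recover $|_{p}$ via
\[
v_{p}(x)\leq v_{p}(y)\iff\forall\gamma\,\bigl(x\in B_{\gamma,0}\to y\in B_{\gamma,0}\bigr),
\]
the universal quantifier ranging over the indexing parameter. \obsref{reducts_preserved_by_elementary_substructures} then transfers the conclusion from $N^{*}$ back to $N$, showing that $\mathcal{N}'$ is $\emptyset$-interdefinable with $(N,+,0,1,|_{p})$. The main obstacle lies in Step 2: starting from a single unstable one-dimensional $\mathcal{N}'$-definable set, one must both (a) show that its new content cannot be purely Presburger---since a purely Presburger expansion would remain stable---and hence forces a genuine $p$-adic ball, and (b) promote that single ball to a uniformly $\mathcal{N}'$-definable family of balls of varying radius. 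The difficulty is disentangling $|_{p}$-atoms from Presburger congruences within arbitrary Boolean combinations in a nonstandard setting, and it is here that the ``additional technical lemmas'' alluded to just before the theorem statement will do the real work.
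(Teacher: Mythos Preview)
Your outline matches the paper's architecture---reduce to dimension one via \lemref{reduction_to_single_variable}, use QE to isolate a ball of nonstandard radius, promote to a family, and recover $|_{p}$---but the two places where real work happens are treated too loosely, and in one of them your proposed mechanism is not the one the paper uses and does not obviously succeed.

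\textbf{Step 2(a).} Saying ``a careful case analysis shows that \ldots\ any genuinely new set must contain \ldots\ a $p$-adic ball'' understates what is required. After QE the set $\phi(\mathcal{M},b)$ is, modulo a $D_m$-coset, a finite union of \emph{swiss cheeses} of possibly different nonstandard radiuses, not a single ball. The paper spends all of \subref{Cheese-Feng-Shui} (Lemmas~\ref{lem:reduction_single_pseudo_cheese_to_ball}, \ref{lem:From_union_of_cheeses_of_different_radiuses_to_a_single_radius}, \ref{lem:From_union_of_cheeses_with_the_same_radius_to_single_cheese}) showing that by taking finitely many translates and intersections one can pass from such a union to a single ball $B(0,\gamma)$. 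This is not a routine case split: it requires the pseudo-cheese notion precisely because disjoint cheeses need not have disjoint outer balls, and the reduction goes through an inductive argument on the number of outer balls.

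\textbf{Step 2(b).} Your claim that ``the infinite chain furnished by the order property upgrades a single ball to an $\mathcal{N}'$-definable family of concentric balls whose radii are cofinal'' is where your proposal diverges from the paper, and I do not see how to make it work. The order-property witnesses $(a_i),(b_j)$ give you nested sets $\phi(\mathcal{M},b_j)$, but the reduction in Step 2(a) was performed for a \emph{fixed} $b$ using auxiliary parameters $c_1,\dots,c_N$ that depend on $b$; there is no reason the same manipulations applied to $b_j$ yield balls, let alone concentric ones with cofinal radii. The paper instead argues as follows: having obtained one $b$ with $\phi(\mathcal{M},b)$ (after the reduction, call it $\psi$) a subgroup equal to $B(0,\gamma)$, it invokes \lemref{Characterizing_definable_subgroups} to know that \emph{whenever} $\psi(\mathcal{M},c)$ is a subgroup it has the form $n_i\mathcal{M}\cap B(0,\delta)$ for one of finitely many $n_i$. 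Then \claimref{finishing_claim_valuation} shows, via a type-realization argument (all elements of nonstandard valuation have the same $\emptyset$-type modulo $D_m$-conditions) and compactness, that there is $K\in\N$ such that \emph{every} radius $\geq K$ arises from some parameter in $N$. The small radii $<K$ are then handled by the Presburger predicates $D_{p^k}$. So the cofinal family comes not from the order property but from homogeneity of nonstandard valuations together with the subgroup characterization.

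In short, the skeleton is right, but Step 2(a) needs the swiss-cheese lemmas rather than a case analysis, and Step 2(b) needs \lemref{Characterizing_definable_subgroups} plus the type argument of \claimref{finishing_claim_valuation}, not the order-property chain.
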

Combined with \factref{conant_pillay_stable_expansions_finite_dp_rank}
and \thmref{our_result_dp_rank}, we obtain:
\begin{thm}
\label{thm:our_result_full_p_adic_elementary_extension_version}Let
$(N,+,0,1,|_{p})$ be an elementary extension of $(\Z,+,0,1,|_{p})$.
Then $(N,+,0,1,|_{p})$ is \textbf{$\emptyset$}-minimal among the
\textbf{$\emptyset$}-proper \textbf{$\emptyset$}-expansions of $(N,+,0,1)$.
\end{thm}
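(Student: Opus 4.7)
The plan is to dichotomize on the stability of a putative intermediate structure: the unstable branch is handled directly by Theorem \thmref{our_result_unstable_p_adic_elementary_extension_version}, and the stable branch is eliminated by descending to $\Z$ via Observation \obsref{reducts_preserved_by_elementary_substructures} and invoking the Conant--Pillay maximality result, Fact \factref{conant_pillay_stable_expansions_finite_dp_rank}.

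Fix a structure $\cN^{\tg}$ on $N$ which is simultaneously a $\emptyset$-reduct of $(N,+,0,1,|_{p})$ and a $\emptyset$-proper $\emptyset$-expansion of $(N,+,0,1)$; the goal is to show that $\cN^{\tg}$ is $\emptyset$-interdefinable with $(N,+,0,1,|_{p})$. If $\cN^{\tg}$ is unstable, Theorem \thmref{our_result_unstable_p_adic_elementary_extension_version} immediately gives the desired conclusion. So it suffices to derive a contradiction under the assumption that $\cN^{\tg}$ is stable.

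Apply Observation \obsref{reducts_preserved_by_elementary_substructures} to the elementary pair $(\Z,+,0,1,|_{p}) \prec (N,+,0,1,|_{p})$ and to its $\emptyset$-reduct $\cN^{\tg}$: the restriction $\cM^{\tg}$ of the relations and functions of $\cN^{\tg}$ to $\Z$ is a well-defined $\emptyset$-reduct of $(\Z,+,0,1,|_{p})$ and satisfies $\cM^{\tg}\prec\cN^{\tg}$; in particular $\cM^{\tg}$ inherits stability from $\cN^{\tg}$. A second application of Observation \obsref{reducts_preserved_by_elementary_substructures}(\ref{enu:A-properness_preserved}) with $A=\emptyset$, now to the pair $\cM^{\tg}\prec\cN^{\tg}$ and to the $\emptyset$-reduct $(N,+,0,1)$ of $\cN^{\tg}$, shows that $(\Z,+,0,1)$ is a $\emptyset$-proper $\emptyset$-reduct of $\cM^{\tg}$; equivalently, $\cM^{\tg}$ is a $\emptyset$-proper $\emptyset$-expansion of $(\Z,+,0,1)$. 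Since every integer is $\emptyset$-definable in $(\Z,+,0,1)$, this is the same as being a plain proper expansion. Finally, $\cM^{\tg}$ is a reduct of $(\Z,+,0,1,|_{p})$, which has dp-rank $1$ by Theorem \thmref{our_result_dp_rank}; since dp-rank cannot increase under reducts, $\cM^{\tg}$ has dp-rank at most $1$, hence finite. This contradicts Fact \factref{conant_pillay_stable_expansions_finite_dp_rank}, completing the case analysis.

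All of the substantive work sits in Theorems \thmref{our_result_dp_rank} and \thmref{our_result_unstable_p_adic_elementary_extension_version}; here the task is purely a clean assembly. The only point requiring attention is to invoke Observation \obsref{reducts_preserved_by_elementary_substructures}(\ref{enu:A-properness_preserved}) to the reduct inclusion $(N,+,0,1)\subseteq\cN^{\tg}$, rather than to $\cN^{\tg}\subseteq(N,+,0,1,|_{p})$, so that what descends to $\Z$ is $\emptyset$-properness of the \emph{expansion}, and to note that on $\Z$ the distinction between $\emptyset$-properness and plain properness collapses because every integer is $\emptyset$-definable in $(\Z,+,0,1)$.
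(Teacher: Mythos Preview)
The proposal is correct and follows essentially the same route as the paper: dichotomize on stability, handle the unstable case by \thmref{our_result_unstable_p_adic_elementary_extension_version}, and in the stable case descend to $\Z$ via \obsref{reducts_preserved_by_elementary_substructures} to obtain a stable dp-minimal proper expansion of $(\Z,+,0,1)$, contradicting \factref{conant_pillay_stable_expansions_finite_dp_rank}. Your explicit remark about applying \obsref{reducts_preserved_by_elementary_substructures}(\ref{enu:A-properness_preserved}) to the inclusion $(N,+,0,1)\subseteq\cN^{\tg}$ (rather than $\cN^{\tg}\subseteq(N,+,0,1,|_p)$) so that $\emptyset$-properness of the \emph{expansion} descends is a nice clarification that the paper leaves implicit.
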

In particular:
\begin{cor}
\label{cor:our_result_full_p_adic}$(\mathbb{Z},+,0,1,|_{p})$ is
minimal among the proper expansions of $(\mathbb{Z},+,0,1)$.
\end{cor}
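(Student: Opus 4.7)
The plan is to deduce this corollary as an immediate consequence of Theorem \ref{thm:our_result_full_p_adic_elementary_extension_version} by instantiating the elementary extension as $N=\mathbb{Z}$ (every structure is trivially an elementary extension of itself). That theorem will then yield that $(\mathbb{Z},+,0,1,|_{p})$ is $\emptyset$-minimal among the $\emptyset$-proper $\emptyset$-expansions of $(\mathbb{Z},+,0,1)$. The only remaining task is to upgrade ``$\emptyset$-minimality'' to ``minimality'' in the stronger sense ($A=\mathbb{Z}$), which should follow from the elementary fact that every integer is $\emptyset$-definable in $(\mathbb{Z},+,0,1)$.

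For the translation step, I would observe that every $n\in\mathbb{Z}$ is $\emptyset$-definable in $(\mathbb{Z},+,0,1)$ as $\underbrace{1+\cdots+1}_{|n|}$ or its additive inverse. Hence in $(\mathbb{Z},+,0,1,|_{p})$, and in any intermediate structure $\cN$ on the universe $\mathbb{Z}$ that expands $(\mathbb{Z},+,0,1)$, every element of $\mathbb{Z}$ is already $\emptyset$-definable, so ``$\mathbb{Z}$-definable'' coincides with ``$\emptyset$-definable'' in each of these structures. As a consequence, for any such pair of structures, the notion of ``reduct'' coincides with that of ``$\emptyset$-reduct'', and likewise ``proper reduct'' coincides with ``$\emptyset$-proper $\emptyset$-reduct'' and ``proper expansion'' coincides with ``$\emptyset$-proper $\emptyset$-expansion''. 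This is exactly the argument used in the paragraph preceding Corollary \ref{cor:conant_result_order_elementary_extension_version}, run here to pass between the two regimes.

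To assemble the pieces, I would suppose for a contradiction that some $\cN$ is simultaneously a proper expansion of $(\mathbb{Z},+,0,1)$ and a proper reduct of $(\mathbb{Z},+,0,1,|_{p})$. By the translation above, $\cN$ would then be a $\emptyset$-proper $\emptyset$-expansion of $(\mathbb{Z},+,0,1)$ and a $\emptyset$-proper $\emptyset$-reduct of $(\mathbb{Z},+,0,1,|_{p})$, contradicting Theorem \ref{thm:our_result_full_p_adic_elementary_extension_version} applied with $N=\mathbb{Z}$. I do not expect any real obstacle at this final step: the substantive content --- the interaction with Fact \ref{fact:conant_pillay_stable_expansions_finite_dp_rank} handling stable intermediates, the dp-rank bound from Theorem \ref{thm:our_result_dp_rank}, and the unstable analysis behind Theorem \ref{thm:our_result_unstable_p_adic_elementary_extension_version} --- has already been entirely packaged into the invoked theorem, so the corollary amounts to a bookkeeping translation between the ``$\emptyset$'' and full versions of minimality.
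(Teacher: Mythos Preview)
Your approach is essentially the paper's (which says the proof is ``identical to the proof of Fact~\ref{conant_result_order} from \corref{conant_result_order_elementary_extension_version}''), but there is one small gap in your translation step. You assert that in any structure $\cN$ on $\mathbb{Z}$ that \emph{expands} $(\mathbb{Z},+,0,1)$, every element of $\mathbb{Z}$ is $\emptyset$-definable. This is not justified: ``expansion'' in the paper's sense only guarantees that $+$, $0$, $1$ are definable \emph{with parameters} in $\cN$, not over $\emptyset$. For instance $(\mathbb{Z},+)$ is an expansion of $(\mathbb{Z},+,0,1)$ in this sense, yet $1$ is not $\emptyset$-definable there (negation is an automorphism). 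So your equivalence ``$\mathbb{Z}$-definable $=$ $\emptyset$-definable in $\cN$'' is not yet available.

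The paper's fix is exactly the missing move: given a hypothetical intermediate $\cN$, first expand it to $\cN'$ by adding $+$, $0$, $1$ to the language. Since these were already parameter-definable in $\cN$, the class of parameter-definable sets is unchanged, so $\cN'$ remains a proper expansion of $(\mathbb{Z},+,0,1)$ and a proper reduct of $(\mathbb{Z},+,0,1,|_{p})$. Now $\cN'$ is a $\emptyset$-expansion of $(\mathbb{Z},+,0,1)$, so your argument that every integer is $\emptyset$-definable goes through for $\cN'$, and properness upgrades to $\emptyset$-properness as you describe. (Also note: the paragraph you cite, before \corref{conant_result_order_elementary_extension_version}, runs the translation in the \emph{opposite} direction; the relevant template is the paper's proof of Fact~\ref{conant_result_order}.) With this one-line adjustment your proof matches the paper's.
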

Again, \corref{our_result_full_p_adic} fails for elementary extensions, see  \propref{counterexample_unstable_valuation_intermediate}.

~

\section{Axioms and basic sentences of $T_{P}$.}

In this section, we present a set of axioms for a subtheory $T_{P}^{\tg}\subseteq T_{P}$,
and use them to prove a number of (families of) sentences of $T_{P}^{\tg}$.
In section 3 we will use these sentences to prove quantifier elimination for $T_{P}^{\tg}$,
from which it will also follow that in fact $T_{P}^{\tg}=T_{P}$. 

For convenience, we will work with the valuation functions $v_{p}$ instead of the relations $|_{p}$. Let us
define a multi-sorted language $L_{P}^{M}$ for the valuations $v_p$ on $(\Z,+,0)$ for $p\in P$ as follows: let $Z$ be the main sort with a function symbol $+$ and a constant symbol $0$, interpreted as in $(\mathbb{Z},+,0)$. For each $p\in P$ we add a distinct
sort $\Gamma_{p}$ together with the symbols $<_{p}$, $0_{p}$, $S_{p}$ and $\infty_{p}$, interpreted as a distinct copy of $\left(\mbox{\ensuremath{\N\cup}\{\ensuremath{\infty}\}},<,0,S,\infty\right)$ where $S$ is the successor function. Finally, we add a function symbol
$v_{p}:Z\to\Gamma_{p}$, interpreted as the $p$-adic valuation\footnote{It could be interesting to consider the language with just one
sort $(N,<,0,S,\infty)$ for valuation, instead of one for each $p\in P$. Since different valuations are allowed to interact with each other, the resulting structures might be much more complicated.}. When
confusion is possible, we denote by $\mathbf{v}_{p}$ the usual valuation in the metatheory, to distinguish it from the function symbol $v_{p}$.
We omit the subscript $p$ in $<_{p}$, $0_{p}$, $S_{p}$, $\infty_{p}$ and $\Gamma_p$ when
no confusion is possible. 
~

We use the following standard notation. Let $k\in\N$ be a
nonnegative integer.  
\begin{itemize}
\item In the $Z$ sort, $\underline{k}$ denotes $\underbrace{1+1+\dots+1}_{k\mbox{ times}}$
if $k>0$ and $0$ if $k=0$. Also, $\underline{-k}$ denotes $-\underline{k}$.
\item For an element $a$ from $Z$, $ka$ denotes $\underbrace{a+a+\dots+a}_{k\mbox{ times}}$ if $k>0$ and $0$ if $k=0$, $(-k) a$ denotes $-(ka)$, similarly
for a variable $x$ in place of $a$. 
\item For an element $\gamma$ from
$\Gamma_{p}$, $\gamma+\underline{k}$ denotes $\underbrace{S(S(\dots(\gamma)\dots))}_{k\mbox{ times}}$,
similarly for a variable $u$ in place of $\gamma$, and $\underline{k}$ is an abbreviation for $0+\underline{k}$. 
\end{itemize}
~

The group $(\Z,+,0)$ with valuations $v_p$ for $p\in P$ can be seen as an $L_{P}$-structure and an $L_{P}^{M}$-structure which are interdefinable (with imaginaries) so they essentially define the same sets. We will therefore not distinguish
between the $L_{P}$-structure and the $L_{P}^{M}$-structure on $(\Z,+,0)$, except when dealing with dp-rank,
where we always refer to the one-sorted language $L_P$.

For quantifier elimination we define $L_{P}^{M,E}=L_{P}^{M}\cup\{-,1\}\cup\{D_{n}\,:\, n\geq1\}$
as before. In the $L_{P}^{E}$-structure on $\Z$, every atomic formula without
parameters is definable by a quantifier-free formula without parameters and with variables in the $Z$ sort in the $L_{P}^{M,E}$-structure on $\Z$, and vice-versa.
Hence quantifier elimination in $L_{P}^{E}$ follows from quantifier
elimination in $L_{P}^{M,E}$. We will therefore prove quantifier
elimination for the $L_{P}^{M,E}$-structure on $\Z$. 

For $a\in \Z$ and $p\in P$, let $(a_i)_{i\in \N}$ be the $p$-adic representation of $a$, i.e. $a = \sum_{i\in \N} a_ip^{i}$ and each $a_i$ is in $\set{0,\dots,p-1}$. For $\gamma \in \N$, the \emph{prefix of $a$ of length $\gamma$} is the sequence $( a_i)_{i<\gamma}$. The \emph{ball} of radius $\gamma$ and center $a$ is the set of all integers with same prefix of length $\gamma$ as $a$.

\begin{prop}
\label{prop:the_axioms}The following sentences are true in $\cZ_{P}$
and therefore are in $T_{P}$:
\begin{enumerate}
\item \label{enu:axiom_axioms_of_Z}Any axiomatization for $Th(\mathbb{Z},+,-,0,1,\{D_{n}\}_{n\geq1})$
in the $Z$ sort.
\item \label{enu:axiom_axioms_of_N}For each $p$, any axiomatization of
$Th(\mbox{\ensuremath{\N}}\cup\{\infty\},<,0,S,\infty)$ in the sort
$(\Gamma_{p},<_{p},0_{p},S_{p},\infty_{p})$.
\item \label{enu:axiom_v_non_negative_and_infty_on_0}For each $p$ : $\forall x(v_{p}(x)\geq0\wedge(v_{p}(x)=\infty\leftrightarrow x=0))$.
\item \label{enu:axiom_v_sum_geq_min}For each $p$ : $\forall x,y(v_{p}(x+y)\geq\mbox{min}(v_{p}(x),v_{p}(y)))$.
\item \label{enu:axiom_v_different_vs_sum_eq_min}For each $p$ : $\forall x,y(v_{p}(x)\neq v_{p}(y)\rightarrow v_{p}(x+y)=\mbox{min}(v_{p}(x),v_{p}(y)))$.
\item \label{enu:axiom_v_multiplicative}For each $p$ and $0\neq n\in\Z$
: $\forall x(v_{p}(nx)=v_{p}(x)+\underline{\mathbf{v}_{p}(n)})$.
\item \label{enu:axiom_v_p_eq_1}For each $p$ : $v_{p}(\underline{p})=1$.
\item \label{enu:axiom_exactly_p^k_possible_extensions}For each $p$ and
$k\in\N$ : Every ball in $v_{p}$ of radius $\gamma$ consists of
exactly $p^{k}$ disjoint balls of radius $\gamma+k$.
\end{enumerate}
\end{prop}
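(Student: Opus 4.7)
The plan is to verify each of the eight clauses individually; none is genuinely difficult, and the proof amounts to a compilation of standard facts about the $p$-adic valuation together with the definition of the sorted structure.

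Items (1) and (2) are tautological: by construction, the $Z$-sort of $\cZ_{P}$ \emph{is} the structure $(\Z,+,-,0,1,\{D_{n}\}_{n\geq 1})$, and each $\Gamma_{p}$-sort \emph{is} a copy of $(\N\cup\{\infty\},<,0,S,\infty)$. So any axiomatization of their respective theories holds in $\cZ_{P}$ by definition.

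For items (3)--(7), I would unpack the definition $\mathbf{v}_{p}(a)=\sup\{k\in\N:p^{k}\mid a\}$ (with $\mathbf{v}_{p}(0)=\infty$) and use unique factorization in $\Z$. Item (3) is immediate since no nonzero integer is divisible by every power of $p$. For (6), writing each nonzero integer uniquely as $p^{\mathbf{v}_{p}(a)}u$ with $\gcd(u,p)=1$ and collecting powers of $p$ in the product yields $\mathbf{v}_{p}(nx)=\mathbf{v}_{p}(n)+\mathbf{v}_{p}(x)$; (7) is the special case. For (5), if $\mathbf{v}_{p}(x)<\mathbf{v}_{p}(y)$ then $x+y = p^{\mathbf{v}_{p}(x)}\bigl(u + p^{\mathbf{v}_{p}(y)-\mathbf{v}_{p}(x)}w\bigr)$ with $u$ coprime to $p$, so the parenthesized factor is coprime to $p$ and the valuation is exactly $\mathbf{v}_{p}(x)$. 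Item (4) then follows, since in the remaining case $\mathbf{v}_{p}(x)=\mathbf{v}_{p}(y)$, factoring out the common power of $p$ shows the valuation of the sum is at least $\mathbf{v}_{p}(x)$, though leading-digit cancellation may push it strictly higher.

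The main combinatorial content is (8). Using $p$-adic expansions, the ball of radius $\gamma\in\N$ around $a$ is precisely the coset $a + p^{\gamma}\Z$, i.e., the set of integers agreeing with $a$ in the first $\gamma$ digits, equivalently $\{b:\mathbf{v}_{p}(b-a)\geq\gamma\}$. A sub-ball of radius $\gamma+k$ is then uniquely determined by prescribing the next $k$ digits $(b_{\gamma},\dots,b_{\gamma+k-1})\in\{0,\dots,p-1\}^{k}$, which yields $p^{k}$ mutually disjoint sub-balls whose union is the original ball. (For the boundary case $\gamma=\infty$ the ball is the singleton $\{a\}$, and the clause is read as a schema over $\gamma\in\N$, or one adds the harmless convention that a singleton is its own unique sub-ball.) None of the steps poses a real obstacle; the role of this proposition is purely preparatory, collecting the axioms for the subtheory $T_{P}^{\tg}$ that will carry the quantifier-elimination argument in the next section, where (8) in particular will do the heavy lifting by ensuring that any finite consistent system of valuative congruences can be realized.
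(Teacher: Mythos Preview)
Your proof is correct and follows essentially the same approach as the paper: items (1)--(7) are dismissed as obvious or unpacked from the definition of $v_p$, and item (8) is verified by counting the $p^{k}$ choices for the $p$-adic digits in positions $\gamma,\dots,\gamma+k-1$. You supply more detail than the paper (which simply declares (1)--(7) ``obvious''), but the strategy is identical.
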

\begin{proof}
(\ref{enu:axiom_axioms_of_Z})-(\ref{enu:axiom_v_p_eq_1}) are obvious.
For (\ref{enu:axiom_exactly_p^k_possible_extensions}), let $a\in\mathbb{Z}$
and $\gamma\in\mathbb{N}$. The ball in $v_{p}$ of radius $\gamma$
around $a$ is the set of integers such that, in $p$-adic representation,
their prefix of length $\gamma$ is the same as the prefix of $a$
of length $\gamma$. There are $p$ possibilities for each digit,
so $p^{k}$ possibilities for the $k$ digits with indices $\gamma,\dots,\gamma+k-1$,
which exactly correspond to the balls of radius $\gamma+k$ contained
in the original ball. 
\end{proof}
Let $T_{P}^{\tg}$ be the theory implied by the axioms (\ref{enu:axiom_axioms_of_Z})-(\ref{enu:axiom_exactly_p^k_possible_extensions}).
All of the following propositions are first order, and we prove them
using only $T_{P}^{\tg}$. Let $\mathcal M$ be some fixed model of $T_P'$, with $\mathcal Z$ the $Z$-sort and $\Gamma_p$ the $\Gamma_p$-sort.

\begin{lem}
\label{lem:immidiate_basic_properties}For each $p$:
\begin{enumerate}
\item \label{enu:immidiate_v_diff_geq_min}$\forall x,y(v_{p}(x-y)\geq\mbox{min}(v_{p}(x),v_{p}(y)))$.
\item \label{enu:immidiate_v_p_translated_by_y_is_surjective}$\forall u\forall y\exists x(v_{p}(x-y)=u)$.
In particular, $v_{p}$ is surjective.
\item \label{enu:immidiate_v_p_of_constants}For each $n\neq0$, $v_{p}(\underline{n})=\underline{\mathbf{v}_{p}(n)}$.
\item \label{enu:immidiate_v_geq_k_equiv_D_p^k}For each $k\geq1$ : $\forall x(v_{p}(x)\geq\underline{k}\leftrightarrow D_{p^{k}}(x))$.
\end{enumerate}
\end{lem}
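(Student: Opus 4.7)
The plan is to deduce each item from the axioms of $T_P'$, using earlier items of the lemma along the way.

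For (1), apply axiom (6) with $n = -1$ to compute $v_p(-y) = v_p(y) + \underline{\mathbf{v}_p(-1)} = v_p(y)$, and then invoke axiom (4) on $x + (-y)$. For (2), the case $u = \infty$ is immediate from axiom (3) by taking $x = y$. For $u < \infty$, use axiom (8) with $k = 1$: it splits the ball of radius $u$ around $y$ into $p \geq 2$ sub-balls of radius $S(u)$, only one of which contains $y$. Picking $x$ from another sub-ball gives $u \leq v_p(x - y) < S(u)$, and discreteness of $\Gamma_p$ (inherited from axiom (2)) forces $v_p(x - y) = u$. Surjectivity of $v_p$ is the special case $y = 0$.

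For (3), first isolate $v_p(1)$: axiom (6) with $n = p$, $x = 1$ gives $v_p(\underline{p}) = v_p(1) + \underline{1}$, while axiom (7) gives $v_p(\underline{p}) = \underline{1}$, so $v_p(1) = 0$. The general statement for $n > 0$ then follows by applying axiom (6) once more with $x = 1$, and the case $n < 0$ reduces to the positive case using $v_p(-z) = v_p(z)$ from (1).

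The only nontrivial part is (4). For $(\Leftarrow)$: if $x = p^k y$, axiom (6) gives $v_p(x) = v_p(y) + \underline{k} \geq \underline{k}$ by axiom (3). For $(\Rightarrow)$, exploit that axiom (1) imports all first-order consequences of Presburger arithmetic with the $D_n$ predicates, in particular that $D_{p^k}$ is a subgroup of $(\mathcal{Z}, +)$ of index $p^k$ with coset representatives $\underline{0}, \dots, \underline{p^k - 1}$. Given $x$ with $v_p(x) \geq \underline{k}$, pick $i \in \{0, \dots, p^k - 1\}$ with $x - \underline{i} \in D_{p^k}$; then $(\Leftarrow)$ gives $v_p(x - \underline{i}) \geq \underline{k}$. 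If $i \neq 0$, then by (1), $v_p(\underline{i}) \geq \min(v_p(x), v_p(x - \underline{i})) \geq \underline{k}$, whereas (3) gives $v_p(\underline{i}) = \underline{\mathbf{v}_p(i)} < \underline{k}$ since $0 < i < p^k$; contradiction. Hence $i = 0$ and $x \in D_{p^k}$. The main obstacle is precisely this last step, since it is the only place where the valuation axioms and the Presburger structure on $\mathcal{Z}$ must be combined.
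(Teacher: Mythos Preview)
Your proof is correct. The paper only spells out item (2), leaving (1), (3), (4) as easy exercises, and your arguments for those are exactly the intended ones. For (2), your approach is a minor variant of the paper's: the paper applies axiom (8) to obtain two points $x_1,x_2$ in the ball of radius $u$ with $v_p(x_1-x_2)<u+\underline{1}$, then invokes item (1) to conclude that one of $v_p(x_i-y)$ equals $u$; you instead pick $x$ directly from a sub-ball of radius $S(u)$ not containing $y$, which immediately gives $u\leq v_p(x-y)<S(u)$ without needing item (1). Your handling of the case $u=\infty$ separately is also a small improvement, since axiom (8) is only stated for finite radii.
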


\begin{proof}
We only prove item (\ref{enu:immidiate_v_p_translated_by_y_is_surjective}), the others are easy to check. By Axiom
(\ref{enu:axiom_exactly_p^k_possible_extensions}) with $k=1$, there
are $x_{1},x_{2}$ such that $v_{p}(x_{1}-y)\geq u$, $v_{p}(x_{2}-y)\geq u$,
and $v_{p}(x_{1}-x_{2})<u+\underline{1}$. Hence by (\ref{enu:immidiate_v_diff_geq_min})
above, $u+\underline{1}>v_{p}(x_{1}-x_{2})=v_{p}((x_{1}-y)-(x_{2}-y))\geq\mbox{min}(v_{p}(x_{1}-y),v_{p}(x_{2}-y))\geq u$.
So either $v_{p}(x_{1}-y)=u$ or $v_{p}(x_{2}-y)=u$.
\end{proof}

The following lemmas are left as an exercice.

\begin{lem}
\label{lem:reduction_to_a_single_D_m_with_m_coprime_to_all_p_k}~
\begin{enumerate}
\item \label{enu:reduction to a single D_m}Let $n_{1},\dots,n_{l}\in\N$,
and let $N\in\N$ be such that $n_{i}|N$ for all $1\leq i\leq l$. Let $b_1,\dots, b_n$ be element of $\mathcal Z$.
Then every boolean combination of formulas of the form $D_{n_{i}}(k_{i}x-b_{i})$
is equivalent to a disjunction (possibly empty, i.e. a contradiction)
of formulas of the form $D_{N}(x-\underline{r}_{j})$, where for each
$j$, $r_{j}\in\{0,1,\dots,N-1\}$. 
\item \label{enu:reduction to D_m with m coprime to all p_k}Let $m\in\N$
and let $m^{\tg},k\in\N$ be such that $m=p^{k}\cdot m^{\tg}$ and
$\mbox{gcd}(m^{\tg},p)=1$. Let $r\in\Z$, and let $r_{1}=r\mbox{ mod }m'$,
$r_{2}=r\mbox{ mod }p^{k}$. Then the formula $D_{m}(x-\underline{r})$
is equivalent to $D_{m'}(x-\underline{r}_{1})\wedge(v_{p}(x-\underline{r}_{2})\geq k)$.
\end{enumerate}
\end{lem}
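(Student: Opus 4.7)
Both parts amount to bookkeeping in the Presburger-style fragment of $T_P^{\tg}$ captured by \propref{the_axioms}(\ref{enu:axiom_axioms_of_Z}), combined with the equivalence $D_{p^k}(y)\leftrightarrow v_p(y)\geq\underline{k}$ from \lemref{immidiate_basic_properties}(\ref{enu:immidiate_v_geq_k_equiv_D_p^k}). Throughout, we work inside the fixed model $\cM$ of $T_P^{\tg}$ with $Z$-sort $\cZ$.

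For part (1), the plan is first to observe that $D_{n_i}(k_i x - b_i)$ depends only on the congruence class of $x$ modulo $N$: if $x\equiv x'\pmod{N}$ then $k_ix\equiv k_ix'\pmod{N}$, and since $n_i\mid N$ this descends to $k_ix\equiv k_ix'\pmod{n_i}$, so $D_{n_i}(k_ix-b_i)\leftrightarrow D_{n_i}(k_ix'-b_i)$. The Presburger axioms guarantee that $D_N(\cZ)$ has index exactly $N$ in $\cZ$, with coset representatives $\underline{0},\underline{1},\ldots,\underline{N-1}$. Hence the set defined by $D_{n_i}(k_ix-b_i)$ is a (possibly empty) union of cosets of $D_N(\cZ)$, which can be rewritten as $\bigvee_{r\in R_i}D_N(x-\underline{r})$ for some $R_i\subseteq\{0,\ldots,N-1\}$ depending on $b_i$. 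Since distinct cosets are disjoint, conjunction, disjunction, and negation of such formulas translate into intersection, union, and complement of index sets in $\{0,\ldots,N-1\}$, so any boolean combination remains of the stated form, with the empty disjunction providing a contradiction.

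For part (2), the approach is the Chinese Remainder Theorem, which is provable in Presburger arithmetic: since $\gcd(m^{\tg},p^k)=1$ and $m=m^{\tg}p^k$, we have $D_m(y)\leftrightarrow D_{m^{\tg}}(y)\wedge D_{p^k}(y)$ for every $y\in\cZ$. Setting $y=x-\underline{r}$ and using that $D_{m^{\tg}}$ (resp.\ $D_{p^k}$) is invariant under replacing the constant by a congruent one modulo $m^{\tg}$ (resp.\ $p^k$), we replace $\underline{r}$ by $\underline{r_1}$ in the first conjunct and by $\underline{r_2}$ in the second. Finally, \lemref{immidiate_basic_properties}(\ref{enu:immidiate_v_geq_k_equiv_D_p^k}) rewrites $D_{p^k}(x-\underline{r_2})$ as $v_p(x-\underline{r_2})\geq\underline{k}$, yielding the claimed equivalence.

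No step presents a genuine obstacle; both parts are routine manipulations on top of the Presburger axiom schema. The only subtlety in (1) is that the parameters $b_i$ may be arbitrary (possibly non-standard) elements of $\cZ$, so the index sets $R_i$ are not explicit integers but are determined by the residues of the $b_i$ modulo $N$; the coset-based argument handles this uniformly without needing to name the residues.
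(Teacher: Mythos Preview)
Your argument is correct, and since the paper leaves this lemma as an exercise, there is no proof to compare against; what you have written is precisely the intended routine verification via cosets modulo $N$ and the Chinese Remainder Theorem combined with \lemref{immidiate_basic_properties}(\ref{enu:immidiate_v_geq_k_equiv_D_p^k}).
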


\begin{lem}
\label{lem:reduction_to_inequalities_with_only_a_single_use_of_the_valuation}~
For $a_1$ and $a_2$ in $\mathcal Z$.
\begin{enumerate}
\item For every $k\geq1$, the formula $v_{p}(x-a_{1})<v_{p}(x-a_{2})+\underline{k}$
is equivalent to 
\[
v_{p}(x-a_{2})<v_{p}(a_{2}-a_{1})\vee v_{p}(x-a_{2})>v_{p}(a_{2}-a_{1})\vee v_{p}(x-a_{1})<v_{p}(a_{2}-a_{1})+\underline{k}\mbox{.}
\]

\item For every $k\geq0$, the formula $v_{p}(x-a_{1})+\underline{k}<v_{p}(x-a_{2})$
is equivalent to $v_{p}(x-a_{2})>v_{p}(a_{2}-a_{1})+\underline{k}$.
\end{enumerate}
\end{lem}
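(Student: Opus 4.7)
The two parts are both exercises in applying the ultrametric inequality (axioms~(\ref{enu:axiom_v_sum_geq_min}) and (\ref{enu:axiom_v_different_vs_sum_eq_min}) of \propref{the_axioms}) to the identity
\[
(x-a_1)=(x-a_2)+(a_2-a_1).
\]
The overall strategy is: whenever $v_p(x-a_2)\neq v_p(a_2-a_1)$, the value of $v_p(x-a_1)$ is pinned down by axiom~(\ref{enu:axiom_v_different_vs_sum_eq_min}), so inequalities between $v_p(x-a_1)$ and $v_p(x-a_2)$ reduce to inequalities involving $v_p(a_2-a_1)$ and one of $v_p(x-a_1)$, $v_p(x-a_2)$; the only genuinely one-sided case is when $v_p(x-a_2)=v_p(a_2-a_1)$, and there the ``single use of the valuation'' form becomes visible directly.

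For part~(2), the plan is to show each direction separately. If $v_p(x-a_1)+\underline{k}<v_p(x-a_2)$, then in particular $v_p(x-a_1)\neq v_p(x-a_2)$, so applying axiom~(\ref{enu:axiom_v_different_vs_sum_eq_min}) to $a_2-a_1=(x-a_1)-(x-a_2)$ gives $v_p(a_2-a_1)=v_p(x-a_1)$, whence $v_p(x-a_2)>v_p(a_2-a_1)+\underline{k}$. Conversely, if $v_p(x-a_2)>v_p(a_2-a_1)+\underline{k}\geq v_p(a_2-a_1)$, then applying axiom~(\ref{enu:axiom_v_different_vs_sum_eq_min}) to $x-a_1=(x-a_2)+(a_2-a_1)$ yields $v_p(x-a_1)=v_p(a_2-a_1)$, so $v_p(x-a_1)+\underline{k}=v_p(a_2-a_1)+\underline{k}<v_p(x-a_2)$. (Here one uses that $<$ in $\Gamma_p$ behaves as the natural order with successor, by axiom~(\ref{enu:axiom_axioms_of_N}), so $\gamma+\underline{k}\geq\gamma$ always and $\gamma<\delta$ is compatible with adding $\underline{k}$.)

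For part~(1), my plan is a three-way case split on the comparison between $v_p(x-a_2)$ and $v_p(a_2-a_1)$. In Case~A ($v_p(x-a_2)<v_p(a_2-a_1)$), axiom~(\ref{enu:axiom_v_different_vs_sum_eq_min}) forces $v_p(x-a_1)=v_p(x-a_2)$, hence $v_p(x-a_1)<v_p(x-a_2)+\underline{k}$ using $k\geq1$; in Case~B ($v_p(x-a_2)>v_p(a_2-a_1)$), the same axiom gives $v_p(x-a_1)=v_p(a_2-a_1)<v_p(x-a_2)\leq v_p(x-a_2)+\underline{k}$; in Case~C ($v_p(x-a_2)=v_p(a_2-a_1)$), the formula $v_p(x-a_1)<v_p(x-a_2)+\underline{k}$ becomes literally $v_p(x-a_1)<v_p(a_2-a_1)+\underline{k}$. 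This covers the ``backward'' direction, since the first two disjuncts of the right-hand side correspond exactly to Cases~A and~B (where the inequality holds unconditionally), and the third disjunct coincides with the inequality in Case~C. For the ``forward'' direction, the only case in which none of the first two disjuncts holds is precisely Case~C, where the equivalence above makes the hypothesis the same as the third disjunct.

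The main obstacle, such as it is, is careful bookkeeping: ensuring that in Case~B one has strict inequality $v_p(x-a_2)<v_p(x-a_2)+\underline{k}$, which uses $k\geq1$ and the fact that $\Gamma_p$-elements (including $\infty$) satisfy $\gamma<\gamma+\underline{1}$ when $\gamma\neq\infty$, and handling the edge case $v_p(x-a_2)=\infty$ (i.e.\ $x=a_2$) separately via axiom~(\ref{enu:axiom_v_non_negative_and_infty_on_0}), where the right-hand side of each part reduces correctly. Once these are dispatched, both equivalences follow purely from the ultrametric axioms.
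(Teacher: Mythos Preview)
Your argument is correct: the three-way case split on $v_p(x-a_2)$ versus $v_p(a_2-a_1)$ via the ultrametric axioms is exactly the intended method, and the paper in fact leaves this lemma as an exercise rather than giving a proof. One small slip: in your final paragraph you attribute the need for $k\geq 1$ and the inequality $v_p(x-a_2)<v_p(x-a_2)+\underline{k}$ to Case~B, but it is Case~A where this is required (since there $v_p(x-a_1)=v_p(x-a_2)$); in Case~B the strict inequality $v_p(a_2-a_1)<v_p(x-a_2)$ is already available from the hypothesis.
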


\begin{lem}
\label{lem:elimination of irrelevant upper bounds}~
For a fixed $p\in P$, $a_0,a_1$ in $\mathcal Z$ and $\gamma_0,\gamma_1\in \Gamma_p$.
\begin{enumerate}
\item Every formula of the form $v_{p}(x-a_{0})\geq\gamma_{0}\wedge v_{p}(x-a_{1})<\gamma_{1}$
where $\gamma_{0}\geq\gamma_{1}$, is either inconsistent (if $v_{p}(a_{0}-a_{1})\geq\gamma_{1}$)
or equivalent to $v_{p}(x-a_{0})\geq\gamma_{0}$ (if $v_{p}(a_{0}-a_{1})<\gamma_{1}$).\\

\item Every formula of the form $v_{p}(x-a_{0})\geq\gamma_{0}\wedge v_{p}(x-a_{1})<\gamma_{1}$
where $\gamma_{0}<\gamma_{1}$ and $v_{p}(a_{0}-a_{1})<\gamma_{0}$
is equivalent to just $v_{p}(x-a_{0})\geq\gamma_{0}$.
\end{enumerate}
\end{lem}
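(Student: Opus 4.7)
The plan is to exploit the ultrametric axioms (\ref{enu:axiom_v_sum_geq_min}) and (\ref{enu:axiom_v_different_vs_sum_eq_min}) of \propref{the_axioms} by writing $x-a_1 = (x-a_0)+(a_0-a_1)$ and comparing the valuations of the two summands. In each case one of the summands has a strictly smaller valuation, so axiom (\ref{enu:axiom_v_different_vs_sum_eq_min}) pins down $v_p(x-a_1)$ exactly, which immediately reveals whether the conjunction is inconsistent, redundant, or forced by $v_p(x-a_0)\geq \gamma_0$ alone.

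For part (1), suppose first $v_p(a_0-a_1)\geq \gamma_1$ and assume toward a contradiction that both $v_p(x-a_0)\geq \gamma_0\geq \gamma_1$ and $v_p(x-a_1)<\gamma_1$ hold. Then $v_p(x-a_0)>v_p(x-a_1)$, so applying axiom (\ref{enu:axiom_v_different_vs_sum_eq_min}) (or \lemref{immidiate_basic_properties}(\ref{enu:immidiate_v_diff_geq_min}) combined with (\ref{enu:axiom_v_different_vs_sum_eq_min})) to $a_0-a_1=(x-a_1)-(x-a_0)$ gives $v_p(a_0-a_1)=v_p(x-a_1)<\gamma_1$, contradicting $v_p(a_0-a_1)\geq \gamma_1$. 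Now suppose instead $v_p(a_0-a_1)<\gamma_1$. The forward implication is trivial. For the backward one, assume $v_p(x-a_0)\geq \gamma_0\geq \gamma_1>v_p(a_0-a_1)$; since the two summands in $x-a_1=(x-a_0)+(a_0-a_1)$ have distinct valuations, axiom (\ref{enu:axiom_v_different_vs_sum_eq_min}) gives $v_p(x-a_1)=v_p(a_0-a_1)<\gamma_1$, as required.

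Part (2) is handled by the same argument: the forward implication is trivial, and for the backward one we again write $x-a_1=(x-a_0)+(a_0-a_1)$. Under the hypotheses $v_p(x-a_0)\geq \gamma_0>v_p(a_0-a_1)$, axiom (\ref{enu:axiom_v_different_vs_sum_eq_min}) yields $v_p(x-a_1)=v_p(a_0-a_1)<\gamma_0<\gamma_1$.

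There is really no significant obstacle here; the only care needed is to check that the valuations of $x-a_0$ and $a_0-a_1$ are \emph{strictly} different in each case so that the equality form (\ref{enu:axiom_v_different_vs_sum_eq_min}), rather than just the inequality (\ref{enu:axiom_v_sum_geq_min}), can be invoked. The hypotheses $\gamma_0\geq \gamma_1>v_p(a_0-a_1)$ in (1) and $\gamma_0>v_p(a_0-a_1)$ in (2) are precisely what guarantees this strict separation.
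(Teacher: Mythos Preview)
Your proof is correct. The paper does not actually give a proof of this lemma: it is grouped among several results that are explicitly ``left as an exercice'' after \lemref{immidiate_basic_properties}. Your argument via the decomposition $x-a_1=(x-a_0)+(a_0-a_1)$ and axiom (\ref{enu:axiom_v_different_vs_sum_eq_min}) is exactly the intended routine verification.
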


\begin{lem}
\label{lem:reduction to a single lower bound}Every two balls in $\Gamma_{p}$
are either disjoint, or one is contained in the other. More generally, for $(a_i)_i\in \mathcal Z$, $(\gamma_i)_i\in \Gamma_p$,
every conjunction of formulas of the form $v_{p}(x-a_{i})\geq\gamma_{i}$
is either inconsistent, or equivalent to a single formula $v_{p}(x-a_{i_{0}})\geq\gamma_{i_{0}}$,
where $\gamma_{i_{0}}=max\{\gamma_{i}\}$.\end{lem}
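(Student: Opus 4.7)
My plan is to derive both assertions from the ultrametric inequality of Axiom~(\ref{enu:axiom_v_sum_geq_min}), together with the fact that in every model of $T_P'$ the order on $\Gamma_p$ is total (part of the axiomatization of $\mathrm{Th}(\mathbb{N} \cup \{\infty\}, <, 0, S, \infty)$ imposed by Axiom~(\ref{enu:axiom_axioms_of_N})); the only other ingredient is item~(\ref{enu:immidiate_v_diff_geq_min}) of \lemref{immidiate_basic_properties}, which extends the ultrametric inequality to differences.

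For the first assertion, I would suppose two balls $B_i = \{x : v_p(x - a_i) \geq \gamma_i\}$ (for $i=1,2$) share a point $c$, and by totality assume $\gamma_1 \leq \gamma_2$. Writing $a_2 - a_1 = (c - a_1) - (c - a_2)$ and applying item~(\ref{enu:immidiate_v_diff_geq_min}) of \lemref{immidiate_basic_properties} yields $v_p(a_2 - a_1) \geq \min(\gamma_1,\gamma_2) = \gamma_1$. Then for any $x \in B_2$, rewriting $x - a_1 = (x - a_2) + (a_2 - a_1)$ and applying Axiom~(\ref{enu:axiom_v_sum_geq_min}) gives $v_p(x - a_1) \geq \gamma_1$, i.e.\ $x \in B_1$; hence $B_2 \subseteq B_1$.

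For the general statement I would proceed as follows. Assume the (finite) conjunction $\bigwedge_i v_p(x - a_i) \geq \gamma_i$ is consistent and fix some $x_0$ realising it. Every pair of balls $B_i, B_j$ then contains $x_0$, so by the first part the family is pairwise comparable under inclusion. Since the index set is finite and $(\Gamma_p, <)$ is totally ordered, some $\gamma_{i_0}$ is maximal; and between two comparable balls the one with the larger $\gamma$-parameter is automatically the smaller (inner) ball, so $B_{i_0} \subseteq B_i$ for every $i$. The conjunction is therefore equivalent to $v_p(x - a_{i_0}) \geq \gamma_{i_0}$.

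I do not expect any serious obstacle here: the core argument is the classical ultrametric calculation, and the only non-trivial model-theoretic input is the totality of $<$ on $\Gamma_p$, which is baked into $T_P'$. The one point that deserves a moment's care is the ``larger $\gamma$ means smaller ball'' step in the last paragraph, but this is exactly the content of the first assertion combined with $\gamma_{i_0} \geq \gamma_i$.
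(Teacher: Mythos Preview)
Your proof is correct and is exactly the standard ultrametric argument one would expect; the paper itself leaves this lemma as an exercise, so there is no proof to compare against, but your write-up fills the gap cleanly.
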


\begin{defn}
\label{def:the order of containment of prefixes}For $a,b\in Z$,
$\gamma,\delta\in\Gamma_{p}$, define $(a,\gamma)\leq_{p}(b,\delta)$
if $\gamma\leq\delta$ and $v_{p}(a-b)\geq\gamma$. \\
Define $(a,\gamma)\sim_{p}(b,\delta)$ if $(a,\gamma)\leq_{p}(b,\delta)$
and $(a,\gamma)\geq_{p}(b,\delta)$.
\end{defn}

$(a,\gamma)\leq_{p}(b,\delta)$ means that $\gamma\leq \delta$ and, in $p$-adic representation,
the prefix of $a$ of length $\gamma$ is contained in the prefix
of $b$ of length $\delta$. This is equivalent to saying that the
ball of radius $\gamma$ around $a$ (namely, $\{x\,:\, v_{p}(x-a)\geq\gamma\}$)
contains the ball of radius $\delta$ around $b$. 

Note that $\leq_{p}$ and $\sim_{p}$ are defined by quantifier-free
formulas, and so do not depend on the model containing the elements
under consideration.

~

\begin{lem}\label{lem:elimination of redundant upper bounds}
The parameters $a_i$ are in $\mathcal Z$ and $\gamma_i$ are in $\Gamma_p$ for some $p\in P$.
\begin{enumerate}
\item Every formula of
the form $v_{p}(x-a_{0})\geq\gamma_{0}\wedge\bigwedge_{m=1}^{n}v_{p}(x-a_{m})<\gamma_{m}$
is equivalent to the formula $v_{p}(x-a_{0})\geq\gamma_{0}\wedge\bigwedge_{m\in C}v_{p}(x-a_{m})<\gamma_{m}$,
for every $C\subseteq\{1,\dots,n\}$ such that $\{(a_{m},\gamma_{m})\,:\, m\in C\}$
contains at least one element from each $\sim_{p}$-equivalence class
of $\leq_{p}$-minimal elements among $\set{(a_m,\gamma_m) : 1\leq m\leq n}$ (i.e. representatives for all the
maximal balls). In particular, this is true for $C$ consisting of
one element from each such class, i.e. for $C$ an antichain.
\item Assume that $(a_0,\gamma_0),\dots,(a_n,\gamma_n)$ are such that  for all $1\leq m\leq n$ we have $\gamma_{m}>\gamma_{0}$,
$v_{p}(a_{m}-a_{0})\geq\gamma_{0}$, and $k_{m}:=\gamma_{m}-\gamma_{0}$
is a standard integer. Assume further that $\{(a_{m},\gamma_{m})\,:\,1\leq m\leq n\}$
is an antichain with respect to $\leq_{p}$. Then every formula of the form $v_{p}(x-a_{0})\geq\gamma_{0}\wedge\bigwedge_{m=1}^{n}v_{p}(x-a_{m})<\gamma_{m}$ is equivalent to a formula
of the form $\bigvee_{i=1}^{l}v_{p}(x-b_{i})\geq\gamma_{N}$ with
$N$ such that $\gamma_N = \mathrm{max}\set{\gamma_{m}\,:\,1\leq m\leq n}$,
where for all $i$, $v_{p}(b_{i}-a_{0})\geq\gamma_{0}$ and for $i\neq j$,
$v_{p}(b_{i}-b_{j})<\gamma_{N}$, and where\\
$l=p^{k_{N}}-\sum_{m}p^{k_{N}-k_{m}}\geq0$ (it may be that $l=0$,
i.e. a contradiction). In particular, $l$ does not depend on the
model $\mathcal M$ of $T_P'$ containing the $a_{i}$'s and $\gamma_{i}$'s. 
\end{enumerate}
\end{lem}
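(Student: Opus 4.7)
The plan is to translate the syntactic claim into geometric language and reduce both parts to properties of balls in the ultrametric induced by $v_{p}$. Recall that $v_{p}(x-a)\geq\gamma$ says $x$ lies in the ball $B(a,\gamma):=\{y:v_{p}(y-a)\geq\gamma\}$, so each conjunct $v_{p}(x-a_{m})<\gamma_{m}$ says $x\notin B(a_{m},\gamma_{m})$. The relation $(a,\gamma)\leq_{p}(b,\delta)$ is equivalent to the containment $B(a,\gamma)\supseteq B(b,\delta)$, as follows from the definition using Axiom (\ref{enu:axiom_v_sum_geq_min}), and $(a,\gamma)\sim_{p}(b,\delta)$ just means $B(a,\gamma)=B(b,\delta)$.

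For part (1), the implication from the full formula to the restricted one is trivial since $C\subseteq\{1,\dots,n\}$. For the converse, I would show that for every $m\in\{1,\dots,n\}$ there is some $m'\in C$ with $B(a_{m'},\gamma_{m'})\supseteq B(a_{m},\gamma_{m})$, which suffices because $x\notin B(a_{m'},\gamma_{m'})$ then immediately forces $x\notin B(a_{m},\gamma_{m})$. To find such an $m'$, pick any $\leq_{p}$-minimal $(a_{m_{0}},\gamma_{m_{0}})$ with $(a_{m_{0}},\gamma_{m_{0}})\leq_{p}(a_{m},\gamma_{m})$; by hypothesis on $C$, some $m'\in C$ satisfies $(a_{m'},\gamma_{m'})\sim_{p}(a_{m_{0}},\gamma_{m_{0}})$, so $B(a_{m'},\gamma_{m'})=B(a_{m_{0}},\gamma_{m_{0}})\supseteq B(a_{m},\gamma_{m})$, as required.

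For part (2), the strategy is to decompose everything into balls of the common radius $\gamma_{N}$ and count. By Axiom (\ref{enu:axiom_exactly_p^k_possible_extensions}) applied with $k=k_{N}$, the ball $B_{0}:=B(a_{0},\gamma_{0})$ decomposes into exactly $p^{k_{N}}$ disjoint balls of radius $\gamma_{N}=\gamma_{0}+k_{N}$. Each $B_{m}:=B(a_{m},\gamma_{m})$ is contained in $B_{0}$ (by the hypothesis $v_{p}(a_{m}-a_{0})\geq\gamma_{0}$), and by the same axiom applied with $k=k_{N}-k_{m}$ it decomposes into $p^{k_{N}-k_{m}}$ disjoint balls of radius $\gamma_{N}$. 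The antichain hypothesis combined with \lemref{reduction to a single lower bound} forces the $B_{m}$ to be pairwise disjoint (balls are either nested or disjoint, and nesting has been excluded). Hence the set defined by the original formula, namely $B_{0}\setminus\bigcup_{m}B_{m}$, is a disjoint union of exactly $l:=p^{k_{N}}-\sum_{m}p^{k_{N}-k_{m}}$ balls of radius $\gamma_{N}$, a nonnegative quantity since it counts the leftover balls; choosing a center $b_{i}$ inside each such ball yields the equivalent disjunction $\bigvee_{i=1}^{l}v_{p}(x-b_{i})\geq\gamma_{N}$, and the stated conditions on the $b_{i}$ just record that they live in $B_{0}$ and in distinct balls of radius $\gamma_{N}$.

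The main obstacle, and the reason for the standardness assumption on the $k_{m}$, is that the counting argument in part (2) needs $p^{k_{N}}$ and $p^{k_{N}-k_{m}}$ to be genuine standard integers: only then does Axiom (\ref{enu:axiom_exactly_p^k_possible_extensions}) provide a literal finite decomposition whose cardinality agrees with the algebraic expression for $l$. Once this is in place, the independence of $l$ from the ambient model $\mathcal{M}$ is automatic, since $l$ is computed from $p$ and the standard integers $k_{1},\dots,k_{n}$ alone.
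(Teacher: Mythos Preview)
Your proof is correct and follows essentially the same approach as the paper. For part (1) both arguments pick a $\leq_p$-minimal element below each $(a_m,\gamma_m)$ and use the representative from $C$; for part (2) both decompose $B(a_0,\gamma_0)$ and each $B(a_m,\gamma_m)$ into sub-balls of radius $\gamma_N$ via Axiom~(\ref{enu:axiom_exactly_p^k_possible_extensions}), use the antichain hypothesis to get disjointness, and count what remains. The only difference is presentational: the paper tracks explicit centers $b_i$ and $c_{m,i}$ and verifies injectivity and disjointness of the index sets $F_m$ element by element, while you phrase the same argument purely in terms of balls.
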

\begin{proof}
We prove \textit{(1)}. Let $C$ be such. For each $1\leq m\leq n$ there is an $m^{\prime}$
such that $(a_{m^{\prime}},\gamma_{m^{\prime}})\leq(a_{m},\gamma_{m})$
and $(a_{m^{\prime}},\gamma_{m^{\prime}})$ is minimal among the $(a_{i},\gamma_{i})$'s.
So $\forall x(v_{p}(x-a_{m^{\prime}})<\gamma_{m^{\prime}}\rightarrow v_{p}(x-a_{m})<\gamma_{m})$.
As $\{(a_{i},\gamma_{i})\,:\, i\in C\}$ contains one element from
each $\sim$-equivalence class of $\leq$-minimal elements, we may
assume $m^{\prime}\in C$. \\
We prove \textit{(2)}. Assume without loss of generality that $\gamma_{1}\leq\gamma_{2}\leq\dots\leq\gamma_{n}$.
Let $b_{0},\dots,b_{p^{k_{n}}-1}$ be the $x_{0},\dots,x_{p^{k}-1}$ from
Axiom \ref{enu:axiom_exactly_p^k_possible_extensions} for $k_{n}$,
$\gamma_{0}$, $a_{0}$. Then $v_{p}(x-a_{0})\geq\gamma_{0}$ is equivalent
to $\bigvee_{i=0}^{p^{k_{n}}-1}(v_{p}(x-b_{i})\geq\gamma_{n})$. For
every $m\geq1$, let $c_{m,0},\dots,c_{m,p^{k_{n}-k_{m}}-1}$ be the
$x_{0},\dots,x_{p^{k}-1}$ from Axiom \ref{enu:axiom_exactly_p^k_possible_extensions}
for $k_{n}-k_{m}$, $\gamma_{m}$, $a_{m}$. Then $v_{p}(x-a_{m})\geq\gamma_{m}$
is equivalent to $\bigvee_{i=0}^{p^{k_{n}-k_{m}}-1}(v_{p}(x-c_{m,i})\geq\gamma_{n})$.
For every $m$, $v_{p}(a_{0}-a_{m})\geq\gamma_{0}$, so for every
$0\leq i\leq p^{k_{n}-k_{m}}-1$, $v_{p}(c_{m,i}-a_{0})\geq\gamma_{0}$.
Hence by the choice of $\{b_{j}\}_{j}$, there is a unique $s_{m,i}<p^{k_n}$
such that $v_{p}(c_{m,i}-b_{s_{m,i}})\geq\gamma_{n}$. So $v_{p}(x-a_{m})\geq\gamma_{m}$
is equivalent to $\bigvee_{i=0}^{p^{k_{n}-k_{m}}-1}(v_{p}(x-b_{s_{m,i}})\geq\gamma_{n})$. 

By the choice of $\{c_{m,i}\}_{i}$, $\bigwedge_{i\neq j}(v_{p}(c_{m,i}-c_{m,j})<\gamma_{n})$,
so also $\bigwedge_{i\neq j}(v_{p}(b_{s_{m,i}}-b_{s_{m,j}})<\gamma_{n})$.
In particular, $i\mapsto s_{m,i}$ is injective for a fixed $m$, hence $F_{m}:=\{s_{m,i}\,:\,0\leq i\leq p^{k_{n}-k_{m}}-1\}$
is of size $p^{k_{n}-k_{m}}$. 

The sets $\{F_{m}\}_{m=1}^{n}$ must be mutually disjoint. Otherwise,
there are $m_{1}<m_{2}$ and $i,j$ such that $s_{m_{1},i}=s_{m_{2},j}$.
Since $v_{p}(c_{m_{1},i}-b_{s_{m_{1},i}})\geq\gamma_{n}$ and $v_{p}(c_{m_{2},j}-b_{s_{m_{2},j}})\geq\gamma_{n}$
we get $v_{p}(c_{m_{1},i}-c_{m_{2},j})\geq\gamma_{n}\geq\gamma_{m_{1}}$.
Since $v_{p}(c_{m_{1},i}-a_{m_{1}})\geq\gamma_{m_{1}}$ and $v_{p}(c_{m_{2},j}-a_{m_{2}})\geq\gamma_{m_{2}}\geq\gamma_{m_{1}}$,
we get $v_{p}(a_{m_{1}}-a_{m_{2}})\geq\gamma_{m_{1}}$, a contradiction
to the antichain assumption. 

Let $F:=\bigcup_{m=1}^{n}F_{m}$. By the above, $\mid F\mid=\sum_{m}p^{k_{n}-k_{m}}$
and 
\[
\forall x(\,(v_{p}(x-a_{0})\geq\gamma_{0}\wedge\bigwedge_{m=1}^{n}v_{p}(x-a_{m})<\gamma_{m})\leftrightarrow({\displaystyle \bigvee_{i\notin F}}v_{p}(x-b_{i})\geq\gamma_{n})\,)\,\,\,)\mbox{.}
\]
\end{proof}

\begin{lem}\label{lem:Preservation of a solution}
For all elements $a_i, a_{i,j}$ in $\mathcal Z$ and $\gamma_i$ in $\Gamma_p$ for some $p\in P$, we have the following.
\begin{enumerate}
\item If b is a solution to $v_{p}(x-a_{0})\geq\gamma_{0}\wedge\bigwedge_{i=1}^{n}v_{p}(x-a_{i})<\gamma_{i}$
and $v_{p}(b'-b)\geq\gamma:=max\{\gamma_{0},\dots,\gamma_{n}\}$ then
$b'$ is also a solution.
\item Every
formula of the form $v_{p}(x-a_{0})\geq\gamma_{0}\wedge\bigwedge_{m=1}^{n}v_{p}(x-a_{m})<\gamma_{m}$
where for each $1\leq m\leq n$, $\gamma_{m}\geq\gamma_{0}+\underline{n}$,
has a solution. 
\item If $p_{1},\dots,p_{l}\in P$
are different primes not dividing $m$ and $\gamma_{i}\in \Gamma_{p_i}$, then every formula of the
form\\
$(\bigwedge_{k=1}^{l}v_{p_{k}}(x-a_{k})\geq\gamma_{k})\wedge D_{m}(x-r)$
has an infinite number of solutions.
\item If
$p_{1},\dots,p_{l}\in P$ are different primes not dividing $m$ and $\gamma_{k,j}\in \Gamma_{p_k}$, then
every formula of the form\\
$$\bigwedge_{k=1}^{l}\left(v_{p_{k}}(x-a_{k,0})\geq\gamma_{k,0}\wedge\bigwedge_{i=1}^{n_{k}}v_{p_{k}}(x-a_{k,i})<\gamma_{k,i}\right)\wedge D_{m}(x-r)$$
where for each $1\leq k\leq l$ and $1\leq i\leq n_{k}$, $\gamma_{k,i}\geq\gamma_{k,0}+\underline{n}_{k}$,
has an infinite number of solutions. In particular, this holds if each
$\gamma_{k,i}-\gamma_{k,0}$ is a nonstandard integer.
\end{enumerate}
\end{lem}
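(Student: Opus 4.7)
For part (1), the argument is immediate from the ultrametric axioms (4) and (5) of \propref{the_axioms}: since $v_{p}(b'-b)\geq\gamma\geq\gamma_{0}$ and $v_{p}(b-a_{0})\geq\gamma_{0}$, axiom (4) gives $v_{p}(b'-a_{0})\geq\gamma_{0}$; and for each $i$, $v_{p}(b-a_{i})<\gamma_{i}\leq\gamma\leq v_{p}(b'-b)$, so axiom (5) yields $v_{p}(b'-a_{i})=v_{p}(b-a_{i})<\gamma_{i}$. For part (2), the plan is to first reduce to an antichain of upper bounds via \lemref{elimination of redundant upper bounds}(1), and then to discard any $a_{m}$ with $v_{p}(a_{m}-a_{0})<\gamma_{0}$, since the corresponding upper-bound constraint becomes automatic on the ball $\{x:v_{p}(x-a_{0})\geq\gamma_{0}\}$ by axiom (5). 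Inside this main ball, axiom (8) provides $p^{n}$ disjoint sub-balls of radius $\gamma_{0}+\underline{n}$, and each surviving forbidden ball $\{x:v_{p}(x-a_{m})\geq\gamma_{m}\}$ lies in exactly one of these sub-balls because $\gamma_{m}\geq\gamma_{0}+\underline{n}$. Hence at most $n$ sub-balls are forbidden, and $p^{n}>n$ for $p\geq 2$ and $n\geq 1$ leaves at least one admissible sub-ball, producing a solution.

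For part (3), I would separate the argument into existence and infinitude. Existence proceeds by induction on $l$: the base case $l=0$ is trivial (take $x=r$), and in the inductive step, starting from a solution $x_{0}$ of the first $l-1$ lower-bound conditions together with $D_{m}(x-r)$, I would look for a correction $y\in Z$ satisfying $m\mid y$, $v_{p_{k}}(y)\geq\gamma_{k}$ for $k<l$, and $v_{p_{l}}(x_{0}+y-a_{l})\geq\gamma_{l}$, so that $x_{0}+y$ solves the full $l$-formula. Existence of such $y$ is a CRT-style claim exploiting that $p_{l}$ is coprime both to $m$ and to every $p_{k}$ with $k<l$; the tools are \lemref{immidiate_basic_properties}(2) (surjectivity of $v_{p_{l}}$) and \lemref{immidiate_basic_properties}(4) (which expresses a standard lower bound on a valuation as a Presburger divisibility), together with the Presburger axioms on $Z$. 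Once one solution $x_{0}$ has been produced, every element of $x_{0}+H$ is again a solution, where $H:=\{y\in Z:m\mid y\wedge\forall k\,(v_{p_{k}}(y)\geq\gamma_{k})\}$ is a subgroup of $Z$. Since $Z$ is torsion-free, infinitude of $H$ follows from the existence of a single nonzero element $h\in H$, which is the same existence statement applied with $r=0$ and all $a_{k}=0$.

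Part (4) follows by combining (2) and (3). For each $k=1,\ldots,l$ separately, the argument in (2) exhibits an element $b_{k}$ such that $v_{p_{k}}(x-b_{k})\geq\gamma_{k,0}+\underline{n}_{k}$ implies both the lower bound $v_{p_{k}}(x-a_{k,0})\geq\gamma_{k,0}$ and all the upper bounds $v_{p_{k}}(x-a_{k,i})<\gamma_{k,i}$ for the $k$-th prime. Substituting these stronger single lower bounds, the formula of (4) is implied by $\bigwedge_{k=1}^{l}v_{p_{k}}(x-b_{k})\geq\gamma_{k,0}+\underline{n}_{k}\wedge D_{m}(x-r)$, which has infinitely many solutions by (3). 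The principal obstacle throughout is the existence step of (3): because the $\gamma_{k}$ may be nonstandard, one cannot directly rewrite $v_{p_{k}}(x-a_{k})\geq\gamma_{k}$ as a single Presburger divisibility condition $D_{p_{k}^{?}}$, so the construction of the correction $y$ has to be carried out using \lemref{immidiate_basic_properties}(2) together with the density supplied by axiom (8), with care at each step to preserve all previously imposed conditions via the ultrametric axioms (4) and (5).
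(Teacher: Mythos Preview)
Your treatments of (1), (2) and (4) are correct and closely match the paper's. Two small remarks. First, in (2) the antichain reduction via \lemref{elimination of redundant upper bounds}(1) is unnecessary: once you observe that each forbidden ball $\{x:v_p(x-a_m)\ge\gamma_m\}$ sits inside a unique sub-ball of radius $\gamma_0+\underline{n}$, at most $n$ sub-balls are contaminated and $p^n>n$ finishes it. The paper runs the same pigeonhole but phrases it as: if every representative $b_i$ failed some upper bound, two would lie in the same forbidden ball, forcing $v_p(b_i-b_j)\ge\gamma_0+\underline{n}$. Second, your version of (2) actually yields a whole clean sub-ball, which lets you pass to (4) with radius $\gamma_{k,0}+\underline{n}_k$. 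The paper's (2) only produces a single solution $b_k$, so in (4) it must instead invoke part (1) with the larger radius $\gamma_k=\max_i\gamma_{k,i}$; both routes are fine.

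The real gap is in (3). Your induction on $l$ is circular as written: the correction $y$ you seek must satisfy $D_m(y)$, $v_{p_k}(y)\ge\gamma_k$ for $k<l$, \emph{and} $v_{p_l}(y-c)\ge\gamma_l$. That is an instance of (3) with all $l$ primes, not $l-1$, so the inductive hypothesis does not apply. You cannot fold the $p_l$-condition into the modulus $m$ via \lemref{immidiate_basic_properties}(4) unless $\gamma_l$ is standard, exactly the obstacle you flag at the end. The essential content of (3) is that the subgroup $B_{p}(0,\gamma)$ surjects onto $Z/mZ$ whenever $\gcd(m,p)=1$, even for nonstandard $\gamma$; equivalently, $B_p(0,\gamma)\not\subseteq qZ$ for any prime $q\neq p$. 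This needs an argument combining Axiom~(8) with the Presburger structure (e.g.\ analysing the cyclic quotients $B_p(0,\gamma)/B_p(0,\gamma+k)\cong\Z/p^k\Z$ and their interaction with reduction mod $m$), and your sketch does not supply it. The paper, for its part, also leaves (3) as an exercise, so there is no paper proof to compare against here --- but your outline for (3) does not yet constitute a proof.
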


\begin{proof}
The proofs of \textit{(1)} and \textit{(3)} are left as an easy exercice. We prove \textit{(2)}. By Axiom \ref{enu:axiom_exactly_p^k_possible_extensions} for $k=n$, there are $b_{0},\dots,b_{p^{n}-1}$ such that
for all $i$, $v_{p}(b_{i}-a_{0})\geq\gamma_{0}$, and for all $i\neq j$,
$v_{p}(b_{i}-b_{j})<\gamma_{0}+\underline{n}$. Then some $b_{i}$
must satisfy $\bigwedge_{m=1}^{n}v_{p}(x-a_{m})<\gamma_{m}$, otherwise,
since $p^{n}>n$, by the Pigeonhole Principle there are $i\neq j$
and $m$ such that $v_{p}(b_{i}-a_{m})\geq\gamma_{m}$ and $v_{p}(b_{j}-a_{m})\geq\gamma_{m}$,
and therefore also $v_{p}(b_{i}-b_{j})\geq\gamma_{m}\geq\gamma_{0}+\underline{n}$,
a contradiction. \\
We prove \textit{(4)}. For each $1\leq k\leq l$, by \textit{(2)}
the formula $v_{p_{k}}(x-a_{k,0})\geq\gamma_{k,0}\wedge(\bigwedge_{i=1}^{n_{k}}v_{p_{k}}(x-a_{k,i})<\gamma_{k,i})$
has a solution $b_{k}$. Let $\gamma_{k}:=\mathrm{max}\{\gamma_{k,0},\dots,\gamma_{k,n_{k}}\}$.
By \textit{(3)} the formula
$(\bigwedge_{k=1}^{l}v_{p_{k}}(x-b_{k})\geq\gamma_{k})\wedge D_{m}(x-r)$
has an infinite number of solutions $\{b_{j}^{\prime}\}_{j\geq1}$.
By \textit{(1)}, every $b_{j}^{\prime}$
is a solution to
$$\bigwedge_{k=1}^{l}\left(v_{p_{k}}(x-a_{k,0})\geq\gamma_{k,0}\wedge\bigwedge_{i=1}^{n_{k}}v_{p_{k}}(x-a_{k,i})<\gamma_{k,i}\right)\wedge D_{m}(x-r)$$
\end{proof}
~\\

\section{Quantifier elimination}

\begin{proof}[Proof of \thmref{QE}]
As mentioned previously, we will in fact prove quantifier elimination for $T_{P}^{\tg}\subseteq T_{P}$.
It is enough to prove that for all models $\mathcal{M}_{1}$ and $\mathcal{M}_{2}$
of $T_{P}^{\tg}$, with a common substructure $A$, and for all formulas
$\phi(x)$ in a single variable $x$ over $A$ which are a conjunction
of atomic or negated atomic formulas, we have $\mathcal{M}_{1}\vDash\exists x\phi(x)\Rightarrow\mathcal{M}_{2}\vDash\exists x\phi(x)$.
Let $\mathcal{M}_{1}$,~$\mathcal{M}_{2}$, $A$ and $\phi(x)$ be
such, and let $b\in\mathcal{M}_{1}$ be such that $\mathcal{M}_{1}\vDash\phi(b)$. 

~

As $v_p$ is surjective for all $p\in P$, we may assume that $x$ is of the $Z$ sort. Since $\phi$
contains only finitely many symbols from $L_{P}$, we may assume for
simplicity of notation that $P$ is finite. So $\phi(x)$ is equivalent%
\footnote{The negation of a formula of form (5) is
$v_{p_{\alpha}}(n_{i,1}x-a_{i,1})\geq v_{p_{\alpha}}(n_{i,2}x-a_{i,2})+\underline{k}$,
which is equivalent to $v_{p_{\alpha}}(n_{i,2}x-a_{i,2})+\underline{k-1}<v_{p_{\alpha}}(n_{i,1}x-a_{i,1})$
if $k>0$, which is of form (6), and to $v_{p_{\alpha}}(n_{i,2}x-a_{i,2})<v_{p_{\alpha}}(n_{i,1}x-a_{i,1})+1$
if $k=0$, which is of form (5). Similarly for the negation of a formula
of form (6). Also, (7) and (8) are in essence special cases of (5) or (6), but
they are required because in $A$ the valuation may be not surjective.} to a conjunction of formulas of the forms:
\begin{enumerate}
\item $n_{i}x=a_{i}$, for some $n_{i}\neq0$.
\item $n_{i}x\neq a_{i}$, for some $n_{i}\neq0$.
\item $D_{m_{i}}(n_{i}x-a_{i})$, for some $n_{i}\neq0$.
\item $\neg D_{m_{i}}(n_{i}x-a_{i})$, for some $n_{i}\neq0$.
\item $v_{p_{\alpha}}(n_{i,1}x-a_{i,1})<v_{p_{\alpha}}(n_{i,2}x-a_{i,2})+\underline{k}_{i}$, for some $p_\alpha\in P$, $n_{i,1}\neq0$ or $n_{i,2}\neq0$, and $k_i\in \N$.
\item $v_{p_{\alpha}}(n_{i,1}x-a_{i,1})+\underline{k}_{i}<v_{p_{\alpha}}(n_{i,2}x-a_{i,2})$, for some $p_\alpha\in P$, $n_{i,1}\neq0$ or $n_{i,2}\neq0$, and $k_i\in \N$.
\item $v_{p_{\alpha}}(n_{i}x-a_{i})\geq\gamma_{i}$, for some $p_\alpha\in P$ and $n_{i}\neq0$.
\item $v_{p_{\alpha}}(n_{i}x-a_{i})<\gamma_{i}$, for some $p_\alpha\in P$ and $n_{i}\neq0$.
\end{enumerate}
~

By multiplicativity of the valuations we may assume that for all formulas
of forms (5) or (6), either $n_{i,1}=n_{i,2}$, $n_{i,1}=0$ or $n_{i,2}=0$.
Therefore, by Lemma~\ref{lem:reduction_to_inequalities_with_only_a_single_use_of_the_valuation},
we may assume that every formula of form (5) or (6) is equivalent
to a formula of form (7) or (8).

~

By Lemma~\ref{lem:reduction_to_a_single_D_m_with_m_coprime_to_all_p_k},
the conjunction of all the formulas of the forms (3) or (4) is equivalent
to a formula of the form
$$\bigvee_{j}\left(D_{m_{j}}(x-r_{j})\wedge\bigwedge_{\alpha<|P|}v_{p_{\alpha}}(x-s_{j,\alpha})\geq\underline{k}_{j,\alpha}\right)$$
where for all $j$ and $\alpha$, $\mathrm{gcd}(m_{j},p_{\alpha})=1$. As $\mathcal{M}_{1}\vDash\phi(b)$, this disjunction is not empty. Let $D_{m}(x-r)\wedge\bigwedge_{\alpha<|P|}v_{p_{\alpha}}(x-s_{\alpha})\geq\underline{k}_{\alpha}$
be one of the disjuncts which are satisfied by $b$. It is enough
to find $b'\in\mathcal{M}_{2}$ which satisfies this disjunct, along
with all the formulas of other forms. Note that $v_{p_{\alpha}}(x-s_{\alpha})\geq\underline{k}_{\alpha}$
is of form (7), so altogether we want to find $b'\in\mathcal{M}_{2}$
which satisfies a conjunction of formulas of the forms:
\begin{enumerate}
\item $n_{i}x=a_{i}$, $n_{i}\neq0$.
\item $n_{i}x\neq a_{i}$, $n_{i}\neq0$.
\item $D_{m}(x-r)$, where for all $\alpha<|P|$, $\mbox{gcd}(m,p_{\alpha})=1$ (only a single such formula).
\item $v_{p_{\alpha}}(n_{i}x-a_{i})\geq\gamma_{i}$, $\alpha<|P|$, $n_{i}\neq0$.
\item $v_{p_{\alpha}}(n_{i}x-a_{i})<\gamma_{i}$, $\alpha<|P|$, $n_{i}\neq0$.
\end{enumerate}
~

By a standard argument, we may assume that the conjunction does not contain formulas of form (1).
For each formula of form (2), there is at most one element which does
not satisfy it. So it is enough to prove that there are infinitely
many elements in $\mathcal{M}_{2}$ which satisfy all the formulas
of forms (3), (4) or (5). 

Let $n:=\prod_{i}n_{i}$. By multiplicativity of the valuations, the
conjunction of formulas of forms (3), (4) or (5) is equivalent to the conjunction of: 
\begin{enumerate}
\item $v_{p_{\alpha}}(nx-\frac{n}{n_{i}}a_{i})\geq\gamma_{i}+\mathbf{v}_{p_{\alpha}}(\frac{n}{n_{i}})$.
\item $v_{p_{\alpha}}(nx-\frac{n}{n_{i}}a_{i})<\gamma_{i}+\mathbf{v}_{p_{\alpha}}(\frac{n}{n_{i}})$.
\item $D_{nm}(nx-nr)$.
\end{enumerate}
By substituting $y=nx$, it is equivalent to satisfy:
\begin{enumerate}
\item $v_{p_{\alpha}}(y-\frac{n}{n_{i}}a_{i})\geq\gamma_{i}+\mathbf{v}_{p_{\alpha}}(\frac{n}{n_{i}})$.
\item $v_{p_{\alpha}}(y-\frac{n}{n_{i}}a_{i})<\gamma_{i}+\mathbf{v}_{p_{\alpha}}(\frac{n}{n_{i}})$.
\item $D_{nm}(y-nr)$.
\item $D_{n}(y)$.
\end{enumerate}
Notice that formula (4) is already implied by formula (3). Again by
Lemma~\ref{lem:reduction_to_a_single_D_m_with_m_coprime_to_all_p_k}, we
may exchange $D_{nm}(y-nr)$ by a formula $D_{m'}(y-r')$, where for
all $\alpha<|P|$, $\mbox{gcd}(m^{\tg},p_{\alpha})=1$. Also, by Lemma~\ref{lem:reduction to a single lower bound}
we may assume that for each $\alpha<|P|$, there is only one formula
of form (1). Altogether, it is enough to prove that in $\mathcal{M}_{2}$
there are infinitely many elements which satisfy the conjunction of
the following formulas: 
\begin{enumerate}
\item \label{enu:before elimination of irrelevant upper bounds}$v_{p_{\alpha}}(x-a_{\alpha,0})\geq\gamma_{\alpha,0}$
for all $\alpha<|P|$.
\item $v_{p_{\alpha}}(x-a_{\alpha,i})<\gamma_{\alpha,i}$ for all $\alpha<|P|$,
$1\leq i\leq n_{\alpha}$.\hspace*{\fill}$\circledast$
\item $D_{m}(x-r)$, where for all $\alpha<|P|$, $\mbox{gcd}(m,p_{\alpha})=1$
(only a single such formula).
\end{enumerate}
~

By Lemma~\ref{lem:elimination of irrelevant upper bounds} (and since this
formula is consistent in $\mathcal{M}_{1}$) we may assume that for
all $\alpha<|P|$, $1\leq i\leq n_{\alpha}$ we have $\gamma_{\alpha,0}<\gamma_{\alpha,i}$
and $v_{p_{\alpha}}(a_{\alpha,0}-a_{\alpha,i})\geq\gamma_{\alpha,0}$.
By Lemma~\ref{lem:elimination of redundant upper bounds} \textit{(1)}, we may assume
that for each $\alpha<|P|$, the set 
$$\{(a_{\alpha,i},\gamma_{\alpha,i})\,:\,1\leq i\leq n_{\alpha}\,,\,\gamma_{\alpha,i}-\gamma_{\alpha,0}\mbox{ is a standard integer}\}$$
is an antichain with respect to $\leq_{p_{\alpha}}$ (Definition~\ref{def:the order of containment of prefixes}). 

For each $\alpha<|P|$, let $S_{\alpha}=\{0\leq i\leq n_{\alpha}\,:\,\gamma_{\alpha,i}-\gamma_{\alpha,0}\mbox{ is a standard integer}\}$
and $\gamma_{\alpha,0}^{\prime}=max\{\gamma_{\alpha,i}\,:\, i\in S_{\alpha}\}$.
For $s=1,2$ and for each $\alpha<|P|$, by  Lemma~\ref{lem:elimination of redundant upper bounds} \textit{(2)}
the conjunction $v_{p_{\alpha}}(x-a_{\alpha,0})\geq\gamma_{\alpha,0}\wedge\bigwedge_{i\in S_{\alpha}}v_{p_{\alpha}}(x-a_{\alpha,i})<\gamma_{\alpha,i}$
is equivalent in $\mathcal{M}_{s}$ to a formula of the form $\bigvee_{i=1}^{l_{\alpha}}v_{p_{\alpha}}(x-a_{\alpha,0,i}^{s})\geq\gamma_{\alpha,0}^{\prime}$,
where for all $i$, $a_{\alpha,0,i}^{s}\in\mathcal{M}_{s}$ and $l_{\alpha}$ does not depend on $s$. Note that $a_{\alpha,0,i}^{s}$ may not be in $A$. Furthermore, by Lemma~\ref{lem:elimination of redundant upper bounds} \textit{(2)}, $v_{p_{\alpha}}(a_{\alpha,0,i}^{s}-a_{\alpha,0})\geq\gamma_{\alpha,0}$
and for $i\neq j$, $v_{p_{\alpha}}(a_{\alpha,0,i}^{s}-a_{\alpha,0,j}^{s})<\gamma_{\alpha,0}^{\prime}$. 

Together, the conjunction of the formulas in \hyperref[enu:before elimination of irrelevant upper bounds]{$\circledast$}
is equivalent in $\mathcal{M}_{s}$ to the disjunction $\psi_{s}=\bigvee_{k=1}^{l}\psi_{s,k}$,
where for each $k$, $\psi_{s,k}$ is the conjunction of the following
formulas: 
\begin{enumerate}
\item $v_{p_{\alpha}}(x-a_{\alpha,0,k}^{s})\geq\gamma_{\alpha,0}^{\prime}$
, for all $\alpha<|P|$.
\item $v_{p_{\alpha}}(x-a_{\alpha,i})<\gamma_{\alpha,i}$, for all $\alpha<|P|$,
$i\notin S_{\alpha}$ (so $\gamma_{\alpha,0}<\gamma_{\alpha,i}$ and
$\gamma_{\alpha,i}-\gamma_{\alpha,0}$ is not a standard integer).
\item $D_{m}(x-r)$, where for all $\alpha<|P|$, $\mathrm{gcd}(m,p_{\alpha})=1$
(only a single such formula).
\end{enumerate}
Furthermore, $l=\prod_{\alpha<|P|}l_{\alpha}$ does not depend on $s$.

Since $\psi_{1}$ is consistent in $\mathcal{M}_{1}$ (satisfied by
$nb$), the disjunction for $s=1$ is not empty, i.e., $l\geq1$.
And since $l$ does not depend on $s$, the disjunction for $s=2$
is also not empty. Consider one such disjunct, $\psi_{2,k}$. By Lemma~\ref{lem:Preservation of a solution} \textit{(4)},
it has an infinite number of solutions. This completes the proof.
\end{proof}

\begin{cor}
$T_{P}^{\tg}$ is a complete theory. Hence $T_{P}^{\tg}=T_{P}$.
\end{cor}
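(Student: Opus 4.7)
The plan is to derive completeness of $T_{P}^{\tg}$ from the quantifier elimination just established, via the standard common-substructure argument, and then deduce $T_{P}^{\tg}=T_{P}$ from the fact that $\cZ_{P}$ is a model of $T_{P}^{\tg}$.

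First I would exhibit an $L_{P}^{M,E}$-structure $A_{0}$ that embeds canonically into every model of $T_{P}^{\tg}$. The natural candidate is the substructure generated by the empty set: in the $Z$-sort, the closure of $\{0,1\}$ under $+$ and $-$ is a copy of $\Z$ with the induced $D_{m}$ relations; in each $\Gamma_{p}$-sort, the closure of $\{0_{p},\infty_{p}\}$ under $S_{p}$ is a copy of $\N\cup\{\infty\}$ with the induced order $<_{p}$. The remaining piece of structure to check is $v_{p}$ on $A_{0}$, which is pinned down by Lemma~\ref{lem:immidiate_basic_properties}(\ref{enu:immidiate_v_p_of_constants}) together with Axiom~(\ref{enu:axiom_v_non_negative_and_infty_on_0}) of \propref{the_axioms}: these force $v_{p}(\underline{n})=\underline{\mathbf{v}_{p}(n)}$ for $n\neq 0$ and $v_{p}(0)=\infty_{p}$. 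Hence $A_{0}$ is well-defined up to isomorphism and occurs as an $L_{P}^{M,E}$-substructure of every model of $T_{P}^{\tg}$.

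Next I would invoke \thmref{QE}, which was established for $T_{P}^{\tg}$ itself. Given any two models $\mathcal{M}_{1},\mathcal{M}_{2}\models T_{P}^{\tg}$ and any $L_{P}^{M,E}$-sentence $\sigma$, QE produces a quantifier-free sentence $\sigma'$ with $T_{P}^{\tg}\vdash\sigma\leftrightarrow\sigma'$; since quantifier-free sentences are absolute between $A_{0}$ and each $\mathcal{M}_{i}$, we get $\mathcal{M}_{1}\models\sigma$ iff $A_{0}\models\sigma'$ iff $\mathcal{M}_{2}\models\sigma$, so $T_{P}^{\tg}$ is complete. Then, since $\cZ_{P}\models T_{P}^{\tg}$ by \propref{the_axioms} and $T_{P}=Th(\cZ_{P})\supseteq T_{P}^{\tg}$, completeness of $T_{P}^{\tg}$ immediately forces $T_{P}^{\tg}=T_{P}$. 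I do not expect any serious obstacle; the only step requiring care is the verification that every atomic relation on $A_{0}$ is uniquely determined by the axioms, which is a routine check using Axioms~(\ref{enu:axiom_axioms_of_Z})--(\ref{enu:axiom_v_p_eq_1}) of \propref{the_axioms}.
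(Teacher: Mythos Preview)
Your proposal is correct and is essentially the same argument as the paper's, just packaged differently: the paper observes directly that, given QE, it suffices to check that $T_{P}^{\tg}$ decides every atomic sentence (equalities of closed terms, $\underline{k}_{1}<_{p}\underline{k}_{2}$, $D_{m}(\underline{n})$, and $v_{p}(\underline{n}_{1})<v_{p}(\underline{n}_{2})$), which is immediate from the axioms. Your common-substructure formulation is the standard equivalent rephrasing --- verifying that the prime substructure $A_{0}$ is uniquely determined is exactly the same computation as verifying that all atomic sentences are decided.
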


\begin{proof}
By quantifier elimination, it is enough to show that $T_{P}^{\tg}$ decides every atomic sentence. These are just the sentences equivalent to one of the forms: $\underline{n}_{1}=\underline{n}_{2}$ in any sort, $\underline{k}_{1}<_{p}\underline{k}_{2}$
in $\Gamma_{p}$, $D_{m}(\underline{n})$ in the $Z$ sort and $v_{p}(\underline{n}_{1})<v_{p}(\underline{n}_{2})$ in the $Z$ sort,
all of which are clearly decided by $T_{P}^{\tg}$. \end{proof}

\begin{rem}\label{rem:QE_one_variable_simpler_form}
Suppose $\mathcal M\models T_P$ and $\phi(x)$ is a consistent formula in a single variable with parameters from $\mathcal M$. Then by quantifier elimination and Lemmas \ref{lem:reduction_to_a_single_D_m_with_m_coprime_to_all_p_k}
and \ref{lem:reduction_to_inequalities_with_only_a_single_use_of_the_valuation}, $\phi(x)$ is equivalent to a disjunction of formulas, which are either of the form $x = a$ or of the form
$$D_m(x-r)\wedge \bigwedge_j nx\neq a_j \wedge \bigwedge_{p\in F}\left( v_p(n_px - a_{p,0})\geq \gamma_{p,0} \wedge \bigwedge_{i=1}^{l_p} v_p(n_px - a_{p,i})< \gamma_{p,i}\right),$$
where $F\subseteq P$ is finite and $\mathrm{gcd}(m,p) = 1$ for all $p\in F$. Moreover, one may assume $\mathrm{gcd}(n_p,p)=1$ for each $p\in F$.
\end{rem}

For $p$ a single prime number and $\mathcal{M}\models T_p$, the following lemma says that the definable subgroups of $(\mathcal M,+)$
are only those of the form $m\mathcal M\cap \set{a\in \mathcal M : v(a)\geq \gamma}$, for $m\in \Z$ and $\gamma\in \Gamma$ and for each such
defining formula, there are only finitely many possible $m$'s when
varying the parameters of the formula. 
\begin{lem}
\label{lem:Characterizing_definable_subgroups}
For a single prime $p$, let $\phi(x,y)$ be
any $L_p^M$-formula, and let $\theta(y)$ be the formula for ``$(\phi(x,y)\,,\,+)$
is a subgroup''. Then there are $n_{1},\dots,n_{k}\geq1$, having $\mbox{gcd}(n_{i},p)=1$
for each $i$, such that the following sentence is true in $T_p$:
\[
\forall y\left(\theta(y)\to\bigvee_{i=1}^{k}\exists w\forall x(\phi(x,y)\leftrightarrow (D_{n_{i}}(x)\wedge(v_{p}(x)\geq v_{p}(w)))\right)\mbox{.}
\]
\end{lem}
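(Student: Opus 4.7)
The plan is to combine Theorem~\ref{thm:QE} with Remark~\ref{rem:QE_one_variable_simpler_form} to put the formula $\phi(x,y)$ in a uniform quantifier-free disjunctive normal form, and then to use the hypothesis $\theta(y)$ to pin down the possible shapes of $G := \phi(\mathcal{M},y)$. By quantifier elimination, for each $y$ the set $\phi(\mathcal{M},y)$ is a finite union of singletons and cells of the form
\[
D_{m}(x-r)\wedge\bigwedge_{j} n\,x\neq a_{j}\wedge v_{p}(n_{p}x-a_{0})\geq\gamma_{0}\wedge\bigwedge_{i=1}^{l}v_{p}(n_{p}x-a_{i})<\gamma_{i},
\]
with $\gcd(m,p)=\gcd(n_{p},p)=1$. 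Crucially, the moduli $m$ and $n_p$ that can appear, the number of cells, and the number $l$ of upper-bound conjuncts per cell, are all bounded by the syntactic complexity of $\phi$, independently of $y$. Let $M$ be the least common multiple of all moduli $m$ arising in this normal form (across all $y$), and take $\{n_{1},\ldots,n_{k}\}$ to be the list of positive divisors of $M$ that are coprime to $p$; this list is finite and determined by $\phi$ alone.

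Now fix $y$ with $\theta(y)$, so $G := \phi(\mathcal{M},y)$ is a subgroup. Since $0 \in G$, at least one cell $C_{0}$ contains $0$ (if $G = \{0\}$ the conclusion is immediate with $w = 0$). Substituting $x = 0$ into its defining formula gives $r = 0$, $v_{p}(a_{0}) \geq \gamma_{0}$, and $v_{p}(a_{i}) < \gamma_{i}$ for all $i \geq 1$. Since $\gcd(n_{p},p)=1$ gives $v_{p}(n_{p}x)=v_{p}(x)$, the main inequality $v_{p}(n_{p}x-a_{0})\geq\gamma_{0}$ reduces, via the ultrametric inequality, to $v_{p}(x)\geq\gamma_{0}$. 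Setting $H:=m\mathcal{M}\cap\{x:v_{p}(x)\geq\gamma_{0}\}$ and $B_{i}:=\{x:v_{p}(n_{p}x-a_{i})\geq\gamma_{i}\}$ gives $C_{0}=H\setminus\bigcup_{i}B_{i}$. Moreover, for $\gamma^{*}:=\max_{i}\gamma_{i}$, the strong triangle inequality forces $H_0 := H \cap \{x : v_{p}(x) \geq \gamma^{*}\}$ to be disjoint from every $B_{i}$, so $H_{0}\subseteq C_{0}\subseteq G$.

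The final step is to argue that $G$ itself must have the form $n\mathcal{M}\cap\{x : v_p(x) \geq v_p(w)\}$ for some $n \in \{n_1,\ldots,n_k\}$ and some $w$. The idea is that the group structure forces $G$ to sit between $H_0$ and a subgroup of the correct form: $G$ is contained in a finite union of cosets of $m\mathcal{M}$ (by the cell decomposition), and the image of $G$ modulo $H_0$ is a subgroup of the cyclic $p$-group $H/H_0 \cong \mathbb{Z}/p^{\gamma^{*}-\gamma_{0}}\mathbb{Z}$, whose subgroups are all of the form $p^{j}(H/H_{0})$. This produces the desired $j$ and hence $\gamma := \gamma_0 + j$, realized as $v_{p}(w)$ by surjectivity of $v_{p}$ (\lemref{immidiate_basic_properties}(\ref{enu:immidiate_v_p_translated_by_y_is_surjective})), and pins down $n$ as a divisor of $M$. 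The main obstacle will be handling the case where $\gamma^{*}-\gamma_{0}$ is nonstandard, so that \lemref{elimination of redundant upper bounds}(2) does not apply to flatten $C_0$ into a finite union of balls; and coordinating across cells with possibly different moduli $m_k$, which should require a Neumann-style argument on the covering of $G$ by the finitely many subgroups $G \cap m_k\mathcal{M}$ to identify a single modulus $n$ dividing the relevant $m_k$'s.
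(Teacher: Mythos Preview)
Your setup via quantifier elimination is fine, and your candidate list $\{n_1,\dots,n_k\}$ (divisors of the lcm $M$ of the moduli, coprime to $p$) is correct. The gap is in the final paragraph, and it stems from working in an arbitrary model $\mathcal{M}$ rather than in $\mathbb{Z}$. Once the $n_i$'s are fixed syntactically from $\phi$, the displayed sentence is first-order without parameters, so by completeness of $T_p$ it suffices to check it in $\mathbb{Z}$. This is exactly what the paper does, and it dissolves both obstacles you flag: in $\mathbb{Z}$ every $\gamma^*-\gamma_0$ is a standard integer, and every subgroup of $(\mathbb{Z},+)$ is $n\mathbb{Z}$ for some $n$. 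The paper then finishes quickly: choose an \emph{infinite} disjunct $\phi_{i_0}(\mathbb{Z},b)$; its valuation constraints make it a finite union of cosets of $m_{i_0}p^{\delta}\mathbb{Z}$ minus a finite set; since this sits inside $n\mathbb{Z}$, one gets $n\mid m_{i_0}p^{\delta}$, and writing $n=n'p^{\gamma}$ with $\gcd(n',p)=1$ yields $n'\mid m_{i_0}$, bounding $n'$ uniformly.

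As written, your argument in a general model has genuine holes beyond the acknowledged ones. You assert that the image of $G$ modulo $H_0$ is a subgroup of the ``cyclic $p$-group $H/H_0$'', but this presupposes $G\subseteq H$, which you have not shown and which is false in general: the other cells in the decomposition of $G$ may lie in cosets of $m_k\mathcal{M}$ for $m_k\neq m$. And when $\gamma^*-\gamma_0$ is nonstandard, $H/H_0$ is not a finite cyclic group in any usable sense, so the subgroup classification you invoke does not apply. A Neumann-type argument might eventually rescue the modulus coordination, but you would still need a separate argument to pin down the valuation threshold; the detour through $\mathbb{Z}$ avoids all of this.
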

\begin{proof}
It is enough to work in $\Z$.
By quantifier elimination (and \lemref{reduction_to_a_single_D_m_with_m_coprime_to_all_p_k}
(\ref{enu:reduction to D_m with m coprime to all p_k})), $\phi(x,y)$
is equivalent to a formula of the form $\bigvee_{i}\bigwedge_{j}\phi_{i,j}(x,y)$,
where for each $i,j$, $\phi_{i,j}(x,y)$ is one of the following:
\begin{enumerate}
\item $t_{i,j}(x,y)=0$, where $t_{i,j}(x,y)$ is a $\{+,-,1\}$-term, i.e.,
of the form $k_{i,j}x+l_{i,j}y+\underline{r}_{i,j}$ for $k_{i,j},l_{i,j},r_{i,j}\in\Z$.
\item $t_{i,j}(x,y)\neq0$, where $t_{i,j}(x,y)$ is a $\{+,-,1\}$-term.
\item $v(t_{i,j}(x,y))\geq v(s_{i,j}(x,y))$, where $t_{i,j}(x,y)$, $s_{i,j}(x,y)$
are $\{+,-,1\}$-terms. ($v(t_{i,j}(x,y))<v(s_{i,j}(x,y))$ is equivalent
to $v(p\cdot t_{i,j}(x,y))\leq v(s_{i,j}(x,y))$, which is of the
same form).
\item $D_{m_{i,j}}(t_{i,j}(x,y))$, where $t_{i,j}(x,y)$ is a $\{+,-,1\}$-term
and $\mbox{gcd}(m_{i,j},p)=1$. 
\end{enumerate}
For each $i$, let $J_{i}=\{j\,:\,\phi_{i,j}(x,y)\mbox{ is of the form }D_{m_{i,j}}(t_{i,j}(x,y))\}$,
and let $m_{i}=\prod_{j\in J_{i}}m_{i,j}$. As in the proof of \lemref{reduction_to_a_single_D_m_with_m_coprime_to_all_p_k}
(\ref{enu:reduction to a single D_m}), the satisfaction of the formula
$D_{m_{i,j}}(t_{i,j}(x,y))$ depends only on the reminders of $x$
and $y$ mod $m_{i,j}$, which are determined by the reminders of
$x$ and $y$ mod $m_{i}$. So there is a set $R_{i}\subseteq\{0,1,\dots,m_{i}-1\}^{2}$
such that $\bigwedge_{j\in J_{i}}\phi_{i,j}(x,y)$ is equivalent to
$\bigvee_{(r,s)\in R_{i}}(D_{m_{i}}(x-\underline{r})\wedge D_{m_{i}}(y-\underline{s}))$.
Therefore, $\phi(x,y)$ is equivalent to a formula of the form $\bigvee_{i}(D_{m_{i}}(x-\underline{r}_{i})\wedge D_{m_{i}}(y-\underline{s}_{i})\wedge\bigwedge_{j}\phi_{i,j}(x,y))$,
where $\mbox{gcd}(m_{i},p)=1$ and for each $i,j$, $\phi_{i,j}(x,y)$
is one of the following:
\begin{enumerate}
\item $t_{i,j}(x,y)=0$, where $t_{i,j}(x,y)$ is a $\{+,-,1\}$-term.
\item $t_{i,j}(x,y)\neq0$, where $t_{i,j}(x,y)$ is a $\{+,-,1\}$-term.
\item $v(t_{i,j}(x,y))\geq v(s_{i,j}(x,y))$, where $t_{i,j}(x,y)$, $s_{i,j}(x,y)$
are $\{+,-,1\}$-terms. 
\end{enumerate}
For each $i$, let $\phi_{i}(x,y)$ be the $i$'th disjunct, i.e.,
the formula $D_{m_{i}}(x-\underline{r}_{i})\wedge D_{m_{i}}(y-\underline{s}_{i})\wedge\bigwedge_{j}\phi_{i,j}(x,y)$.

Let $b\in\Z$ be such that $\phi(\Z,b)$ is a subgroup. If $\phi(\Z,b)$
is finite, it must be $\{0\}$. To account for this case, we may take
$n_{1}=1$, and for $w=0$ we have that $\phi(x,b)$ is equivalent
to $D_{n_{1}}(x)\wedge(v_{p}(x)\geq v_{p}(0))$. If $\phi(\Z,b)$
is infinite, then $\phi(\Z,b)=n\Z$ for some $n\geq1$. Moreover,
there must be an $i_{0}$ such that $\phi_{i_{0}}(\Z,b)$ is infinite.
So $D_{m_{i_{0}}}(b-\underline{s}_{i_{0}})$ holds, hence $\phi_{i_{0}}(x,b)$ is equivalent to just $D_{m_{i_{0}}}(x-\underline{r}_{i_{0}})\wedge\bigwedge_{j}\phi_{i_{0},j}(x,b)$.
As $\phi(\Z,b)$ is infinite, it is clear that no formula $\phi_{i_{0},j}(x,y)$ is of the form (1), hence $\phi_{i_{0}}(x,b)$ is equivalent to $D_{m_{i_{0}}}(x-\underline{r}_{i_{0}})\wedge\bigwedge_{j}\phi_{i_{0},j}(x,b)$,
where for each $j$, $\phi_{i_{0},j}(x,b)$ is one of the following: 
\begin{enumerate}
\item $k_{i_{0},j}x\neq c_{i_{0},j}$.
\item $v(k_{i_{0},j}^{\tg}x-c_{i_{0},j}^{\tg})\geq v(k_{i_{0},j}^{\tg\tg}x-c_{i_{0},j}^{\tg\tg})$.
\end{enumerate}

Applying \lemref{reduction_to_inequalities_with_only_a_single_use_of_the_valuation} to formulas as in (2), we may assume that $\phi_{i_0}(x,b)$ is equivalent to $D_{m_{i_{0}}}(x-\underline{r}_{i_{0}})\wedge\bigwedge_{j}\phi_{i_{0},j}(x,b)$,
where for each $j$, $\phi_{i_{0},j}(x,b)$ is one of the following: 
\begin{enumerate}
\item $k_{i_{0},j}x\neq c_{i_{0},j}$.
\item $v(k_{i_{0},j}x-c_{i_{0},j})\geq\gamma_{i_{0},j}$.
\item $v(k_{i_{0},j}x-c_{i_{0},j})<\gamma_{i_{0},j}$.
\end{enumerate}
The formula $v(k_{i_{0},j}x-c_{i_{0},j})\geq\gamma_{i_{0},j}$ defines
a coset of $p^{\gamma_{i_{0},j}}\Z$,
and the formula $v(k_{i_{0},j}x-c_{i_{0},j})<\gamma_{i_{0},j}$ defines
a finite union of cosets of $p^{\gamma_{i_{0},j}}\Z$.
Let $J=\{j\,:\,\phi_{i_{0},j}(x,b)\mbox{ is of form 2 or 3}\}$, and
let $\delta=\mbox{max}\{\gamma_{i_{0},j}\,:\, j\in J\}$. Then for
every $j\in J$, every coset of $p^{\gamma_{i_{0},j}}\Z$ is a finite
union of cosets of $p^{\delta}\Z$. So $\bigcap_{j\in J}\phi_{i_{0},j}(\Z,b)$
is a finite intersection of finite unions of cosets of $p^{\delta}\Z$,
and hence is itself just a finite union of cosets of $p^{\delta}\Z$
(since every two cosets are either equal or disjoint). Therefore,
$\phi_{i_{0}}(\Z,b)$ is a set of the form $U\backslash F$, where
$F$ is a finite set (the set of points excluded by the inequalities
$k_{i_{0},j}x\neq c_{i_{0},j}$), and $U$ is a finite union of the
form $\bigcup_{j=1}^{N}((m_{i_{0}}\Z+r_{i_{0}})\cap(p^{\delta}\mathbb{Z}+c_{j}))$.
For each $j$, $(m_{i_{0}}\Z+r_{i_{0}})\cap(p^{\delta}\mathbb{Z}+c_{j})$
is a coset of $m_{i_{0}}p^{\delta}\mathbb{Z}$ (it is not empty, since
$\mbox{gcd}(m_{i_{0}},p)=1$), so $U$ is of the form $\bigcup_{j=1}^{N}(m_{i_{0}}p^{\delta}\mathbb{Z}+d_{j})$.
As $\phi_{i_{0}}(\Z,b)$ is infinite, this union is not empty.

Now, $(m_{i_{0}}p^{\delta}\mathbb{Z}+d_{1})\backslash F\subseteq U\backslash F=\phi_{i_{0}}(\Z,b)\subseteq\phi(\Z,b)=n\Z$,
so $n$ divides $m_{i_{0}}p^{\delta}$ since $F$ is finite. Write $n=n^{\tg}p^{\gamma}$
with $\mbox{gcd}(n^{\tg},p)=1$. Then $n^{\tg}|m_{i_{0}}$, and in
particular, $n^{\tg}\leq m_{i_{0}}$. So $\phi(x,b)$ is equivalent
to $D_{n}(x)$, which is equivalent to $D_{n^{\tg}}(x)\wedge v(x)\geq\gamma$,
and $n^{\tg}\leq m_{i_{0}}$. Recall that $i_{0}$ depends on $b$,
but there are only finitely many $i$'s, so $m=\mbox{max}\{m_{i}\}$
exists, and hence, for any $b$ such that $\phi(x,b)$ is a subgroup,
there is an $n^{\tg}\leq m$ with $\mbox{gcd}(n^{\tg},p)=1$, and
there is a $\gamma$ such that $\phi(x,b)$ is equivalent to $D_{n^{\tg}}(x)\wedge v(x)\geq\gamma$,
and we are done.
\end{proof}
~

\section{dp-rank of $T_{P}$}

Quantifier elimination now enables us to determine the dp-rank of
$T_{P}$. We first review two equivalent definitions of dp-rank. More
details about dp-rank can be found, e.g. in \cite{Simon_2015}. We work in a monster model $\mathbb M$ of some complete $L$-theory $T$, for some langage $L$.
\begin{defn}
Let $\phi(x,b)$ be an $L$-formula, with parameters $b$ from $\mathbb M$, and let $\kappa$ be a
(finite or infinite) cardinal. We say $\mbox{dp-rank}(\phi(x,b))<\kappa$
if for every family $(I_{t}\,:\, t<\kappa)$ of mutually indiscernible
sequences over $b$ and $a\models \phi(x,b)$, there is $t<\kappa$ such that
$I_{t}$ is indiscernible over $ab$.

~

We say that $\mbox{dp-rank}(\phi(x,b))=\kappa$ if $\mbox{dp-rank}(\phi(x,b))<\kappa^{+}$
but not $\mbox{dp-rank}(\phi(x,b))<\kappa$. We say that $\mbox{dp-rank}(\phi(x,b))\leq \kappa$ if $\mbox{dp-rank}(\phi(x,b))<\kappa$ or $\mbox{dp-rank}(\phi(x,b))=\kappa$. Note that if $\kappa$ is a
limit cardinal, it may happen that $\mbox{dp-rank}(\phi(x,b))<\kappa$ but
$\mbox{dp-rank}(\phi(x,b))\geq\lambda$ for all $\lambda<\kappa$.

For a theory $T$ we denote $\mbox{dp-rank}(T)=\mbox{dp-rank}(x=x)$
where $|x|=1$. If $\mbox{dp-rank}(T)=1$ we say that $T$ is \emph{dp-minimal}.
\end{defn}

\begin{defn}
Let $\kappa$ be
a cardinal. An \emph{ict-pattern of length $\kappa$} consists
of:
\begin{itemize}
\item a collection of formulas $(\phi_{\alpha}(x;y_\alpha)\,:\,\alpha<\kappa)$, with $\abs{x} = 1$,
\item an array $(b_{i}^{\alpha}\,:\, i<\omega,\,\alpha<\kappa)$ of tuples,
with $|b_{i}^{\alpha}|=|y_{\alpha}|$
\end{itemize}
such that for every $\eta:\kappa\to\omega$ there exists an element $a_{\eta}\in \mathbb M$
such that 
\[
\models\phi_{\alpha}(a_\eta;b_i^\alpha)\iff\eta(\alpha)=i.
\]

We define $\kappa_{ict}$ as the minimal $\kappa$ such that
there does not exist an ict-pattern of length $\kappa$.\end{defn}
\begin{fact}[{\cite[Proposition 4.22]{Simon_2015}}]
For any cardinal $\kappa$,
we have $\mbox{dp-rank}(T)<\kappa$ if and only if $\kappa_{ict}\leq\kappa$. \end{fact}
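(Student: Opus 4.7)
The plan is to prove the equivalence by establishing, in both directions, a translation between an ict-pattern of length $\kappa$ and the witness for $\mbox{dp-rank}(T)\geq\kappa$, namely a family of $\kappa$ mutually indiscernible sequences together with a single element breaking the indiscernibility of each. Both directions proceed by extraction and compactness, with the converse being the technical core.

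For the direction $\kappa_{ict}>\kappa\Rightarrow\mbox{dp-rank}(T)\geq\kappa$, I would start with an ict-pattern $(\phi_\alpha(x;y_\alpha),(b_i^\alpha))$ of length $\kappa$ and apply the standard Ramsey--plus--compactness extraction to first replace each row $(b_i^\alpha)_{i<\omega}$ by an indiscernible sequence and then refine the family to be mutually indiscernible, while preserving the realisability of $a_\eta$ for every $\eta$. Specialising to the constantly zero function $\eta\equiv 0$ and letting $a:=a_\eta$, one then has $\phi_\alpha(a,b_0^\alpha)\wedge\neg\phi_\alpha(a,b_i^\alpha)$ for every $\alpha<\kappa$ and every $i>0$, so no row is indiscernible over $a$, which is precisely the failure of $\mbox{dp-rank}(T)<\kappa$.

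For the converse direction $\mbox{dp-rank}(T)\geq\kappa\Rightarrow\kappa_{ict}>\kappa$, I would begin with mutually indiscernible sequences $(I_t)_{t<\kappa}$ over a parameter set $B$ together with an element $a$ such that no $I_t$ is indiscernible over $aB$. For each $t$, a pigeonhole and further extraction inside $I_t$ produces a formula $\phi_t(x;\bar{y}_t)$ (after absorbing the $B$-parameters into $\bar{y}_t$) and a subsequence $(\bar{c}_i^t)_{i<\omega}$ of blocks of $I_t$ along which $\phi_t(a,-)$ assumes, say, the truth pattern ``true exactly at position $0$''. Extending each row indiscernibly (by compactness) if needed, these will serve as the rows of the desired ict-pattern.

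The main obstacle is to realise every $\eta:\kappa\to\omega$ by a single element $a_\eta$. To this end I would consider, for each $\eta$, the type
\[
p_\eta(x)=\bigcup_{t<\kappa}\left(\set{\phi_t(x,\bar{c}^t_{\eta(t)})}\cup\set{\neg\phi_t(x,\bar{c}^t_i):i\neq\eta(t)}\right)
\]
and verify that every finite subset of $p_\eta$ is consistent. Consistency splits into two ingredients: within a single row $t$, the freedom to shift the alternating pattern along the indiscernible extension of $I_t$ produces a witness for that row's finite constraints; across different rows, mutual indiscernibility of the $I_t$ guarantees that the constraints coming from distinct rows are independent and can be combined into a single realisation. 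Compactness then yields $a_\eta$ for every $\eta$, producing an ict-pattern of length $\kappa$. The delicate point is ensuring the extraction of $\phi_t$ and $(\bar{c}^t_i)_i$ is uniform enough to support this argument, which reduces to a careful pigeonhole inside each row together with a preservation-of-mutual-indiscernibility check during the extraction.
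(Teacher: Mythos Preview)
The paper does not prove this statement; it is cited without proof from Simon's book, so there is nothing in the paper to compare your argument against. Your outline is the standard argument that appears there, and the strategy in both directions is correct.

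Two points in your sketch deserve more precision than you give them. In the forward direction, extracting mutually indiscernible rows from an array with $\kappa$ rows is not a direct Ramsey argument when $\kappa$ is uncountable; one instead writes the existence of such an array as a type and checks finite consistency (where Ramsey does apply), noting that the ict-pattern condition is itself a consistency statement and so survives passage to an array realising the same EM-type. In the converse direction, the extraction of the ``true exactly at position $0$'' pattern from each row is the heart of the matter and is more than a pigeonhole: after stretching $I_t$ via compactness one locates a cut in the index set across which the witnessing formula changes truth value, and builds the sequence of blocks around that cut. For your final shifting argument to go through, this block sequence should be indexed by $\mathbb{Z}$ rather than $\omega$, so that for any finite $S_t\subseteq\omega$ the order-preserving shift $i\mapsto i-\eta(t)$ lands inside the index set and $a$ already witnesses the required negative instances; your remark about ``extending each row indiscernibly'' is exactly where this is needed, though it would help to say so explicitly.
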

\begin{prop}
\label{prop:dp_minimality_of_one_valuation}For any prime $p$, $T_{p}$
is dp-minimal (in the one-sorted language).\end{prop}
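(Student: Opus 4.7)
The plan is to use the ict-pattern characterization of dp-rank: it suffices to show that there is no ict-pattern of length $2$ in a single variable $x$ for $T_p$. Suppose for contradiction that we have formulas $\phi_1(x;y_1),\phi_2(x;y_2)$ with $|x|=1$ and an array $(b_i^\alpha : i<\omega,\,\alpha<2)$ witnessing such a pattern, with realizations $a_\eta$ for each $\eta:2\to\omega$. By the standard extraction procedure, I may assume that the two rows $(b_i^1)_{i<\omega}$ and $(b_j^2)_{j<\omega}$ are each indiscernible and mutually indiscernible. By Theorem~\ref{thm:QE} together with Remark~\ref{rem:QE_one_variable_simpler_form}, the set defined by $\phi_\alpha(x;b_i^\alpha)$ is a finite disjunction of cells of the form
\[
D_m(x-r)\wedge\bigwedge_k (nx\neq c_k)\wedge v_p(nx-a_0)\geq\gamma_0\wedge\bigwedge_{j=1}^{\ell} v_p(nx-a_j)<\gamma_j,
\]
with $\gcd(m,p)=\gcd(n,p)=1$. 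By pigeonhole on which disjunct is selected by each $a_\eta$, followed by a further extraction, I reduce to the case where $\phi_\alpha(x;b_i^\alpha)$ defines a single such cell for every $i$ and $\alpha$.

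The heart of the argument is a case analysis driven by the ultrametric dichotomy: any two balls in the $Z$-sort are either disjoint or nested. Indiscernibility of $(b_i^\alpha)_i$ forces the main ball $B_i^\alpha:=\{x : v_p(n^\alpha x - a_{i,0}^\alpha)\geq\gamma_{i,0}^\alpha\}$ to be, as $i$ varies, either constant, pairwise disjoint across $i$, or a strictly monotone chain under inclusion; the same trichotomy controls the sub-balls removed. For the coset $r_i^\alpha+m^\alpha\Z$, since $\gcd(m^\alpha,p)=1$ any two such cosets are equal or disjoint, so indiscernibility forces them to be either constant or pairwise disjoint in $i$. Mutual indiscernibility then imposes a single uniform containment/disjointness type between a generic $B_i^1$ and a generic $B_j^2$. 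Combined with the counting/antichain analysis of Lemma~\ref{lem:elimination of redundant upper bounds}\,(2), which quantifies exactly how many sub-balls sit inside a given cell, this should rule out, case by case, the simultaneous existence of witnesses $a_\eta$ for all four $\eta\in\{0,1\}^2$.

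The main obstacle I expect is the case where both $(B_i^1)_i$ and $(B_j^2)_j$ vary non-trivially in a strictly nested manner, since the interleaving between the two chains is a priori an independent combinatorial degree of freedom. The strategy in that case is to exploit the fact that the cell defining $\phi_1(x;b_i^1)$ depends only on $b_i^1$, not on $b_j^2$: the relative position of $B_i^1$ with respect to any $B_j^2$ is thus dictated by $i$ alone (up to mutual indiscernibility), and together with the symmetric statement for $\phi_2$ this should collapse one of the two chains to a constant, contradicting the ict-independence. The congruence factor of each cell contributes harmlessly, since the pure reduct $(\Z,+,\{D_m\}_{m\geq 1})$ is already dp-minimal (indeed stable). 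Putting these pieces together yields that no ict-pattern of length $2$ exists, hence $T_p$ is dp-minimal.
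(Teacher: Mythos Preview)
Your approach is genuinely different from the paper's. The paper does not attempt any ultrametric case analysis on cells. Instead it strips the predicates $D_n$ off to obtain a sublanguage $L^-$, observes that $(\Z,+,-,0,1,|_p)$ sits as an $L^-$-substructure inside $\mathbb{Q}_p$, and uses the known dp-minimality of $(\mathbb{Q}_p,+,\cdot,0,1,|_p)$ as a black box. The one nontrivial point is the $D_n$-part: the paper applies Ramsey to the array of witnesses $a_{i,j}$ so that they all share the same residue modulo $N!$ (for $N$ large enough). Once residues are fixed, every $D_n$-atom in the DNF has constant truth value on the witnesses, so the $L^-$-parts alone already form an ict-pattern; being quantifier-free, this pattern transfers verbatim to $\mathbb{Q}_p$, a contradiction.

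Your sketch has a real gap in the handling of holes. Even if mutual indiscernibility forces the relative position of the outer balls $B_i^1$ versus $B_j^2$ to be constant in $(i,j)$, this does \emph{not} ``collapse one chain to a constant'': the outer balls can remain a strictly nested chain while, say, every $B_i^1$ sits inside every $B_j^2$. The ict-condition for $i\neq\eta(\alpha)$ is not ``$a_\eta\notin B_i^\alpha$'' but ``$a_\eta$ lies outside the cell'', and the cell is an outer ball \emph{minus holes}. Nothing in your outline excludes the possibility that $a_\eta$ avoids $\phi_\alpha(x;b_i^\alpha)$ by landing in one of its holes rather than outside $B_i^\alpha$; the holes are themselves families of balls subject to the same trichotomy, so the case analysis recurses and you have not indicated why it terminates. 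Your justification for dismissing the congruence part (``the pure reduct is dp-minimal'') is also insufficient: sub-additivity over overlapping languages only gives dp-rank $\leq 2$. What actually makes the congruence part inert is that indiscernibility forces $r_i^\alpha\bmod m^\alpha$ to be constant in $i$ (or, in the paper's version, Ramsey on the witnesses); you gesture at this with the ``constant or pairwise disjoint'' dichotomy but do not use it.
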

\begin{proof}
Denote $L=L_{p}^{E}$ and $T=T_{p}$. Let $L^{-}$ contain the
symbols of $L$, except for the divisibility relations $\{D_{n}\}_{n\geq1}$.
Let $\mathcal{Z}^{-}$ be the reduct of $\mathcal{Z}_{p}$ to $L^{-}$.
Let $\mathbb{Q}_{p}^{-}$ be $\mathbb{Q}_{p}$ as an $L^{-}$-structure.
It is a reduct of the structure $(\mathbb{Q}_{p},+,-,\cdot,0,1,|_{p})$,
which is dp-minimal (see \cite[Theorem 6.6]{Dolich2009}), and therefore
is also dp-minimal. Note that $\mathcal{Z}^{-}$ is a substructure
of $\mathbb{Q}_{p}^{-}$.

Let $L^{\prime}=L\cup\{Z\}$. Interpret $Z$ in $\mathbb{Q}_{p}$
as $\mathbb{Z}$, and interpret each $D_{n}$ such that $D_{n}\cap\mathbb{Z}$
is the usual divisibility relation and $D_{n}\cap(\mathbb{Q}_{p}\backslash\mathbb{Z})=\emptyset$,
thus making it an $L^{\prime}$-structure $\mathbb{Q}_{p}^{\prime}$.
Let $\mathcal{M}$ be an $\omega_{1}$-saturated model of $Th(\mathbb{Q}_{p}^{\prime})$,
and let $A=Z(\mathcal{M})$ be the interpretation of $Z$ in it. Then
$A$ is an $\omega_{1}$-saturated model of $T$.

Suppose that $T$ is not dp-minimal. Then there are formulas $\phi(x,y)$,
$\psi(x,z)$ in $L$ with $|x|=1$, and elements $(b_{i}\,:\, i<\omega)$,
$(c_{j}\,:\, j<\omega)$, $(a_{i,j}\,:\, i,j<\omega)$ in $A$ such
that $\phi(a_{i,j},b_{i^{\prime}})$ if and only if $i=i^{\prime}$ and $\psi(a_{i,j},c_{j^{\prime}})$
iff $j=j^{\prime}$. By \thmref{QE} we may assume that $\phi$,~$\psi$
are quantifier-free and in disjunctive normal form. Let $N$ be the
largest $n$ such that $D_{n}$ appears in $\phi$ or $\psi$. Color
each pair $(i,j)$ such that $i>j$ by $a_{i,j}\mbox{ mod }N!$. By
Ramsey Theorem, we may assume that all the elements $a_{i,j}$ with
$i>j$ have the same residue modulo $N!$, and so modulo all $n\leq N$. 

Write $\phi$ as $\bigvee_{k}\bigwedge_{l}(\phi_{k,l}^{\prime}\wedge\phi_{k,l}^{\prime\prime})$
and $\psi$ as $\bigvee_{k}\bigwedge_{l}(\psi_{k,l}^{\prime}\wedge\psi_{k,l}^{\prime\prime})$,
where $\phi_{k,l}^{\prime}$, $\psi_{k,l}^{\prime}$ are atomic or negated atomic $L^{-}$-formulas
and $\phi_{k,l}^{\prime\prime}$, $\psi_{k,l}^{\prime\prime}$ are
atomic or negated atomic formulas containing no relations other than $\{D_{n}\}_{n\geq1}$.
For each $k$, denote by $\phi_{k}$, $\psi_{k}$ the formulas $\bigwedge_{l}(\phi_{k,l}^{\prime}\wedge\phi_{k,l}^{\prime\prime})$
and $\bigwedge_{l}(\psi_{k,l}^{\prime}\wedge\psi_{k,l}^{\prime\prime})$
respectively.

For every $i>j$ we have $\phi(a_{i,j},b_{i})$, so there is a $k_{i,j}$
such that $\phi_{k_{i,j}}(a_{i,j},b_{i})$. Again by Ramsey Theorem,
we may assume that all the $k_{i,j}$'s are equal to some $k_{0}$,
so for every $i>j$ we have $\phi_{k_{0}}(a_{i,j},b_{i})$. For every
$i^{\prime}\neq i$ we have $\neg\phi(a_{i^{\prime},j},b_{i})$, so
in particular $\neg\phi_{k_{0}}(a_{i^{\prime},j},b_{i})$. Similarly,
we may assume that for some $k_{1}$, for every $i>j$ we have $\psi_{k_{1}}(a_{i,j^{\prime}},c_{j})$
iff $j=j^{\prime}$. 

Let $\phi_{k}^{\prime}$, $\psi_{k}^{\prime}$ be the formulas obtained
from $\phi_{k}$, $\psi_{k}$ respectively, by deleting all the formulas
$\phi_{k,l}^{\prime\prime}$, $\psi_{k,l}^{\prime\prime}$. So $\phi_{k}^{\prime}$,
$\psi_{k}^{\prime}$ are $L^{-}$-formulas. 

For every $m\in\mathbb{N}$, let $I_{m}=\{m+1,\dots,2m\}$, $J_{m}=\{1,\dots,m\}$.
For every $(i,j)\in I_{m}\times J_{m}$, we have $\phi_{k_{0}}(a_{i,j},b_{i})$
and therefore also $\phi_{k_{0}}^{\prime}(a_{i,j},b_{i})$. Let $i\neq i^{\prime}\in I_{m}$,
and suppose for a contradiction that $\phi_{k_{0}}^{\prime}(a_{i^{\prime},j},b_{i})$,
i.e. $\bigwedge_{l}(\phi_{k_{0},l}^{\prime}(a_{i^{\prime},j},b_{i}))$.
But we know that $\neg\phi_{k_{0}}(a_{i^{\prime},j},b_{i})$, so for
some $l_{0}$ we have $\neg\phi_{k_{0},l_{0}}^{\prime}(a_{i^{\prime},j},b_{i})\vee\neg\phi_{k_{0},l_{0}}^{\prime\prime}(a_{i^{\prime},j},b_{i})$.
Therefore, we get $\neg\phi_{k_{0},l_{0}}^{\prime\prime}(a_{i^{\prime},j},b_{i})$.
But from $\phi_{k_{0}}(a_{i,j},b_{i})$ we also get $\phi_{k_{0},l_{0}}^{\prime\prime}(a_{i,j},b_{i})$.
Together, this contradicts the fact that all the elements $a_{i,j}$
with $i>j$ have the same residue modulo all $n\leq N$.

Altogether, in $A$, for every $(i,j)\in I_{m}\times J_{m}$ we have
$\phi_{k_{0}}^{\prime}(a_{i,j},b_{i^{\prime}})$ if and only if $i=i^{\prime}$,
and similarly also $\psi_{k_{1}}^{\prime}(a_{i,j},c_{j^{\prime}})$
iff $j=j^{\prime}$. Since $\phi_{k_{0}}^{\prime}$, $\psi_{k_{1}}^{\prime}$
are quantifier-free, and $A$ is a substructure of $\mathcal{M}$,
this holds also in $\mathcal{M}$. As $m$ is arbitrary, this contradicts
the dp-minimality of $Th(\mathbb{Q}_{p}^{-})$.\end{proof}
\begin{lem}
\label{lem:subadditivity_of_dp-rank_with_respect_to_unions_of_non-interacting_languages}Let
$L=\bigcup_{\alpha<\kappa}L_{\alpha}$ be a language such that every
atomic formula in $L$ is in $L_{\alpha}$ for some $\alpha$.
Let T be an $L$-theory that eliminates quantifiers, and for $\alpha<\kappa$
let $T_{\alpha}$ be its reduction to $L_{\alpha}$. Let $\mu_{\alpha}$ be cardinals such that $\mbox{dp-rank}(T_{\alpha})\leq\mu_{\alpha}$. Then $\mbox{dp-rank}(T)\leq\sum_{\alpha<\kappa}\mu_{\alpha}$, where $\sum$ is the cardinal sum.\end{lem}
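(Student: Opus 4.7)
The plan is to use the mutually indiscernible sequences characterization of dp-rank and argue by contradiction. Assume $\mbox{dp-rank}(T) > \mu$, where $\mu := \sum_{\alpha < \kappa} \mu_\alpha$. Unfolding the definition, there exist an element $a$ and a family $(I_t : t < \mu^+)$ of mutually $L$-indiscernible sequences over $\emptyset$ such that \emph{no} $I_t$ is $L$-indiscernible over $a$. My goal is to distribute the $\mu^+$ indices $t$ across the $\kappa$ languages in a way that violates the dp-rank bounds on the reducts.

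The key observation, and the heart of the argument, is a decomposition statement: under QE of $T$ together with the hypothesis that every atomic $L$-formula lies in some $L_\alpha$, an $L$-indiscernible sequence $I$ is $L$-indiscernible over $a$ if and only if it is $L_\alpha$-indiscernible over $a$ for every $\alpha < \kappa$. The forward direction is immediate since $L_\alpha \subseteq L$. For the reverse, QE reduces indiscernibility over $a$ to the symmetry of quantifier-free $L$-formulas on $I$; since $I$ is already indiscernible over $\emptyset$ and symmetry is preserved under boolean combinations, it suffices to check symmetry for each atomic $L$-formula with parameters from $a$, and each such atomic lies in some single $L_\alpha$ by hypothesis. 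Carrying out this reduction carefully, and making sure one really pins down a single $L_\alpha$-witness rather than a mixture across languages, is the step I expect to require the most care.

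Granting the decomposition, for each $t < \mu^+$ I pick $\alpha(t) < \kappa$ such that $I_t$, viewed in the $L_{\alpha(t)}$-reduct, is not $L_{\alpha(t)}$-indiscernible over $a$. Setting $S_\alpha := \{t < \mu^+ : \alpha(t) = \alpha\}$ partitions $\mu^+$ into $\kappa$ pieces. Mutual $L$-indiscernibility of $(I_t)_{t<\mu^+}$ trivially passes to each $L_\alpha$-reduct, so for every $\alpha$ the subfamily $(I_t : t \in S_\alpha)$ is a family of mutually $L_\alpha$-indiscernible sequences over $\emptyset$ none of which is $L_\alpha$-indiscernible over $a$. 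The hypothesis $\mbox{dp-rank}(T_\alpha) \leq \mu_\alpha$ then forces $|S_\alpha| \leq \mu_\alpha$. Summing cardinals yields $\mu^+ = |\bigsqcup_\alpha S_\alpha| \leq \sum_\alpha |S_\alpha| \leq \sum_\alpha \mu_\alpha = \mu$, contradicting $\mu < \mu^+$, and completing the proof.
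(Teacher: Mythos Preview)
Your proof is correct and follows essentially the same route as the paper's: assume the bound fails, obtain $\mu^{+}$ mutually indiscernible sequences none of which is indiscernible over a single element, use quantifier elimination to find for each sequence an atomic witness to non-indiscernibility lying in a single $L_{\alpha}$, and derive a contradiction by cardinal arithmetic. The only cosmetic difference is that the paper picks the atomic witness $\psi_{t}$ directly and then isolates one $\alpha$ with $|S_{\alpha}|>\mu_{\alpha}$, whereas you phrase the atomic step as a general equivalence (``$L$-indiscernible over $a$ iff $L_{\alpha}$-indiscernible over $a$ for all $\alpha$'') and bound every $|S_{\alpha}|\le\mu_{\alpha}$ before summing; these are contrapositives of the same argument.
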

\begin{proof}
Suppose not. Let $\lambda:=\sum_{\alpha<k}\mu_{\alpha}$. Then there
is a family $(\mathcal{I}_{t}\,:\, t<\lambda^{+})$
of mutually indiscernible sequences over $\emptyset$, $\mathcal{I}_{t}=(a_{t,i}\,:\, i\in I_{t})$,
and a singleton $b$, such that for all $t$, $\mathcal{I}_{t}$ is
not indiscernible over $b$. For every $t<\lambda^{+}$, let $\phi_{t}(\bar{x})=\phi_{t}(\bar{x},b)$
be a formula over $b$ and let $\bar c_{t,1}$ and $\bar c_{t,2}$ be two
finite tuples of elements of $\mathcal I _t$ of length $|\bar{x}|$ such that $\phi_{t}(\bar{c}_{t,1})$
and $\neg\phi_{t}(\bar{c}_{t,2})$, i.e. witnessing the non-indiscernibility
of $\mathcal{I}_{t}$ over $b$. By quantifier elimination in $T$,
we may assume that $\phi_{t}$ is quantifier-free. Hence there must
be an atomic formula $\psi_{t}(\bar{x})=\psi_{t}(\bar{x},b)$ such that $\psi_{t}(\bar{c}_{t,1})$ and $\neg\psi_{t}(\bar{c}_{t,2})$.
By the assumption on $L$, there is an $\alpha_{t}<\kappa$ such that
$\psi_{t}(\bar{x},y)$ is in $L_{\alpha_{t}}$. Therefore, there must
be an $\alpha<\kappa$ such that $|\{t<\lambda^{+}\,:\,\alpha_{t}=\alpha\}|>\mu_{\alpha}$,
as otherwise we get $$\lambda^{+}=\left| \bigcup_{\alpha<\kappa}\set{t<\lambda^{+}\,:\,\alpha_{t}=\alpha}\right|\leq\sum_{\alpha<\kappa}\left|\set{t<\lambda^{+}\,:\,\alpha_{t}=\alpha}\right| \leq\sum_{\alpha<\kappa}\mu_{\alpha}=\lambda,$$
a contradiction. But then $(\mathcal{I}_{t}\,:\, t<\lambda^{+},\,\alpha_{t}=\alpha)$
is a family of more than $\mu_{\alpha}$ mutually indiscernible sequences
over $\emptyset$ with respect to $L_{\alpha}$, and for all
$t$ such that $\alpha_t = \alpha$, $\mathcal{I}_{t}$ is not indiscernible over $b$ with respect
to $L_{\alpha}$, a contradiction to $\mbox{dp-rank}(T_{\alpha})\leq\mu_{\alpha}$. 
\end{proof}
Now \thmref{our_result_dp_rank} follows: 
\begin{proof}[Proof of \thmref{our_result_dp_rank}]
$\mbox{dp-rank}(T_{P})\leq|P|$ follows from \propref{dp_minimality_of_one_valuation}
and \lemref{subadditivity_of_dp-rank_with_respect_to_unions_of_non-interacting_languages}
for $L_{P}^{E}=\bigcup_{\alpha<|P|}L_{p_{\alpha}}^{E}$. For $\alpha<|P|$
let $\phi_{\alpha}(x,y)$ be the formula $x|_{p_{\alpha}}y\wedge y|_{p_{\alpha}}x$
(i.e. $v_{p_{\alpha}}(x)=v_{p_{\alpha}}(y)$), and for $\alpha<|P|$,
$i\in\mathbb{N}$ let $a_{\alpha,i}$ be such that $v_{p_{\alpha}}(a_{\alpha,i})=i$.
Let $F\subseteq|P|$ be finite. By Lemma~\ref{lem:Preservation of a solution} \textit{(4)},
for every $\eta:F\to\mathbb{N}$ there is a $b_{\eta}$ such that
for every $\alpha\in F$, $v_{p_{\alpha}}(b_{\eta})=v_{p_{\alpha}}(a_{\alpha,\eta(\alpha)})$.
If $P$ is finite, just take $F=|P|$. Otherwise, by compactness,
there are such $b_{\eta}$ for $F=|P|$ as well. These $\phi_{\alpha}(x,y)$,
$a_{\alpha,i}$ and $b_{\eta}$ form an ict-pattern of length $|P|$,
so $\mbox{dp-rank}(T_{P})\geq|P|$.
\end{proof}
~

\section{There are no intermediate structures between $(\mathbb{Z},+,0)$
And $(\mathbb{Z},+,0,|_{p})$}

In this section we focus on a single valuation. Let $p$ be any prime.
Unless stated otherwise, we work in a monster model $\mathcal{M}=(M,+,0,|_{p})$
of $T_{p}$, and denote its value set by $\Gamma$. We may omit the
subscript $p$ when it is clear from the context. Recall that $\Gamma$
is an elementary extension of $(\mathbb{N},<,0,S)$. 

~

\subsection{Preliminaries\label{sub:Cheese-Feng-Shui}}

For $a\in M$, $\gamma\in\Gamma$, we denote by $B(a,\gamma)$ the
definable set $\{x\,:\, v(x-a)\geq\gamma\}$ and call it the \emph{ball}
of \emph{radius} $\gamma$ around $a$. If $\gamma=\infty$ then $B(a,\gamma)$
is just $\{a\}$, and we call such balls \emph{trivial}. Unless stated
otherwise, balls are assumed to be nontrivial. Of course, $a\in B(a,\gamma)$,
and if $b\in B(a,\gamma)$ then $B(b,\gamma)=B(a,\gamma)$. Also,
by \lemref{immidiate_basic_properties} (\ref{enu:immidiate_v_p_translated_by_y_is_surjective}),
if $\delta\neq\gamma$ then $B(a,\delta)\neq B(a,\gamma)$. So the
radius of a ball is well defined. We denote the radius of a ball $B$
by $rad(B)$.

We call a \emph{swiss cheese} any
non-empty set $F$ that can be written as $F=B_{0}\backslash\bigcup_{i=1}^{n}B_{i}$,
where $\{B_{i}\}_{i=0}^{n}$ are balls. Note that this representation
is not unique. As the intersection of any two balls is either empty
or equals one of them, we may always assume that $\{B_{i}\}_{i=1}^{n}$
are nonempty, pairwise disjoint and contained in $B_{0}$. 
\begin{rem}
\label{rem:A-finite-number-of-proper-holes-cannot-cover-a-ball}Rephrasing
Lemma~\ref{lem:Preservation of a solution} \textit{(2)},
if $B_{0}$, $B_{1},\dots,B_{n}$ are balls such that for all $i\geq1$,
$rad(B_{i})\geq rad(B_{0})+\underline{n}$, then $B_{0}\backslash\bigcup_{i=1}^{n}B_{i}\neq\emptyset$.
In particular, this holds if $|rad(B_{i})-rad(B_{0})|\notin\N$.\end{rem}
\begin{prop}
\label{prop:bottom_ball_well_defined}Let $\emptyset\neq F=B_{0}\backslash\bigcup_{i=1}^{n}B_{i}$
be a swiss cheese. Then there exists a unique ball $B_{0}^{\tg}$
such that $F\subseteq B_{0}^{\tg}$ and $B_{0}^{\tg}$ is minimal
with respect to this property. This $B_{0}^{\tg}$ satisfies $B_{0}^{\tg}\subseteq B_{0}$,
$|rad(B_{0}^{\tg})-rad(B_{0})|\in\N$, and it is also the unique ball
$B\subseteq B_{0}$ such that there are at least two distinct balls
$B_{1}^{\tg\tg}$ and $B_2''$, satisfying $rad(B_{j}^{\tg\tg})=rad(B_{0}^{\tg})+1$
and $B_{j}^{\tg\tg}\cap F\neq\emptyset$ for $j=1,2$.
\end{prop}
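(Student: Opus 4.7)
The plan is to fix a reference point $a\in F$, work with the decreasing chain of balls $D_k:=B(a,\mathrm{rad}(B_0)+\underline{k})$ for $k\in\N$ (so $D_0=B_0\supseteq F$ and $D_{k+1}\subsetneq D_k$), and study $\Sigma:=\set{k\in\N : F\subseteq D_k}$. This $\Sigma$ is downward-closed in $\N$ and contains $0$. Once I show $\Sigma\subseteq\set{0,1,\dots,n}$, I put $k^{*}:=\max\Sigma$ and take $B_0':=D_{k^{*}}$; everything else will follow quickly.

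The crux is the bound $\Sigma\subseteq\set{0,\dots,n}$. Given $k\in\Sigma$, I decompose $B_0\setminus D_k$ canonically as the disjoint union of the $(p-1)k$ balls $C_{j,l}$ ($1\leq j\leq k$, $1\leq l\leq p-1$), where $C_{j,l}$ is one of the $p-1$ children of $D_{j-1}$ at level $\mathrm{rad}(B_0)+\underline{j}$ other than $D_j$. These $C_{j,l}$ are pairwise disjoint since they sit on distinct ``off-$a$'' branches of the ball tree. The hypothesis $F\subseteq D_k$ forces each $C_{j,l}\subseteq\bigcup_{i=1}^{n}B_i$, so by ultrametricity each $C_{j,l}$ is either (A) contained in a single $B_i$ --- and since $a\notin B_i$ while the next-larger ball containing $C_{j,l}$ inside $B_0$ is $D_{j-1}\ni a$, this forces $B_i=C_{j,l}$ --- or (B) covered by the $B_i$'s strictly inside $C_{j,l}$, which by inspecting the $p$ children of $C_{j,l}$ requires at least $p$ such $B_i$'s. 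Since distinct $C_{j,l}$'s are disjoint, the $B_i$'s serving different $(j,l)$'s are distinct, and counting yields $n\geq(p-1)k\geq k$.

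Minimality and uniqueness of $B_0'$ then come from standard ultrametric arguments: any ball $B\supseteq F$ contains $a$, so $B$ and $B_0'$ are nested; if $B\subsetneq B_0'$ then $B=B(a,\gamma)$ with $\gamma>\mathrm{rad}(B_0)+\underline{k^{*}}$, and discreteness of $\Gamma$ (an elementary extension of $(\N,<,0,S)$) in the standard case, together with the fact that a non-standard $\gamma-\mathrm{rad}(B_0)$ exceeds every standard shift, both give $F\subseteq D_{k^{*}+1}$, contradicting $k^{*}=\max\Sigma$. The claims $B_0'\subseteq B_0$ and $|\mathrm{rad}(B_0')-\mathrm{rad}(B_0)|=k^{*}\in\N$ are then immediate. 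For the characterization, $k^{*}+1\notin\Sigma$ produces some $y\in F\setminus D_{k^{*}+1}$ sitting in a level-$(\mathrm{rad}(B_0')+1)$ child of $B_0'$ different from $D_{k^{*}+1}$, and $D_{k^{*}+1}$ itself meets $F$ (it contains $a$), supplying the two required balls; uniqueness follows because any ball $B\subseteq B_0$ other than $B_0'$ either strictly contains $B_0'$ (so $F$ concentrates in the single child of $B$ containing $B_0'$), lies strictly inside one child of $B_0'$, or is disjoint from $B_0'$.

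I expect the main obstacle to be the counting step $k^{*}\leq n$, especially in case (B) where a single $C_{j,l}$ is tiled by small $B_i$'s: one must verify both that any such cover requires at least $p$ disjoint pieces and that the pieces cannot be recycled across different $(j,l)$'s, which relies on pairwise disjointness of the $C_{j,l}$'s.
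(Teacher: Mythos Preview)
Your proof is correct and takes a genuinely different route from the paper's.

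The paper separates the holes $B_1,\dots,B_n$ into those at finite distance from $\mathrm{rad}(B_0)$ (the set $I_1$) and those at infinite distance ($I_2$), applies the structural Lemma~\ref{lem:elimination of redundant upper bounds}\,(2) to rewrite $B_0\setminus\bigcup_{i\in I_1}B_i$ as a disjoint union $\bigsqcup_{j=1}^{l}B_j''$ of balls at a common standard level, and then uses Remark~\ref{rem:A-finite-number-of-proper-holes-cannot-cover-a-ball} to show that any ball $C\supseteq F$ must in fact swallow every $B_j''$. This leaves only finitely many candidates for the minimal ball $B_0'$ between $B_1''$ and $B_0$.

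You bypass both that lemma and the finite/infinite dichotomy entirely: your pigeonhole on the ``off-branch'' balls $C_{j,l}$ gives the bound $k\le n/(p-1)$ directly from the number of holes, so the chain $D_k$ must stop at a standard level. This is more elementary and self-contained, and it yields the slightly sharper bound $\mathrm{rad}(B_0')-\mathrm{rad}(B_0)\le n/(p-1)$. The paper's approach, on the other hand, reuses machinery already in place and ties the proposition more visibly to the swiss-cheese calculus used downstream.

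One minor point: in your uniqueness argument for the final characterization, the case ``$B$ lies strictly inside one child of $B_0'$'' does not by itself preclude two children of $B$ from meeting $F$. However, the paper's proof (and its intended reading of the statement) only asserts uniqueness among balls $B$ with $F\subseteq B$, and for those your case (i) (``$B\supsetneq B_0'$, so $F$ sits in a single child of $B$'') already suffices; the extra cases you list are unnecessary rather than incorrect.
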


\begin{proof}
Let $I_{1}=\{1\leq i\leq n\,:\,|rad(B_{i})-rad(B_{0})|\in\N\}$, $I_{2}=\{1,\dots ,n\}\backslash I_{1}$.
By applying Lemma~\ref{lem:elimination of redundant upper bounds} \textit{(2)}
to $B_{0}\backslash\bigcup_{i\in I_{1}}B_{i}\neq\emptyset$, we see
that $B_{0}\backslash\bigcup_{i\in I_{1}}B_{i}=\bigsqcup_{j=1}^{l}B_{j}^{\tg\tg}$,
where $l\geq1$ and for all $j$, $B_{j}^{\tg\tg}\subseteq B_{0}$
and $rad(B_{j}^{\tg\tg})=\mbox{max}\{rad(B_{i})\,:\, i\in I_{1}\}$.
So $F=\bigsqcup_{j=1}^{l}(B_{j}^{\tg\tg}\backslash\bigcup_{i\in I_{2}}B_{i})$.
By \remref{A-finite-number-of-proper-holes-cannot-cover-a-ball},
for each $j$, $B_{j}^{\tg\tg}\backslash\bigcup_{i\in I_{2}}B_{i}\neq\emptyset$.
If $C$ is a ball such that $F\subseteq C$, then for each $j$, $B_{j}^{\tg\tg}\backslash\bigcup_{i\in I_{2}}B_{i}\subseteq C$,
and we claim that in fact $B_{j}^{\tg\tg}\subseteq C$. Indeed, by
Axiom \ref{enu:axiom_exactly_p^k_possible_extensions}, $B_{j}^{\tg\tg}=\bigsqcup_{t=1}^{p}B_{j,t}^{\tg\tg}$
with $rad(B_{j,t}^{\tg\tg})=rad(B_{j}^{\tg\tg})+1$, and again by
\remref{A-finite-number-of-proper-holes-cannot-cover-a-ball}, for
each $t$, $B_{j,t}^{\tg\tg}\backslash\bigcup_{i\in I_{2}}B_{i}\neq\emptyset$.
So $C\cap B_{j,t}^{\tg\tg}\neq\emptyset$ but $C\not\subseteq B_{j,t}^{\tg\tg}$
(as also for $s\neq t$, $C\cap B_{j,s}^{\tg\tg}\neq\emptyset$),
therefore $B_{j,t}^{\tg\tg}\subseteq C$. This holds for all $t$,
hence $B_{j}^{\tg\tg}\subseteq C$. In particular, $B_{1}^{\tg\tg}\subseteq C$.
As $|rad(B_{1}^{\tg\tg})-rad(B_{0})|\in\N$, there are only finitely
many balls $B$ such that $B_{1}^{\tg\tg}\subseteq B\subseteq B_{0}$,
so we may choose $B_{0}^{\tg}$ to be a minimal one (with respect
to inclusion) among those that also satisfy $F\subseteq B$ (exists,
since $B_{0}$ satisfies this). By this choice, $B_{0}^{\tg}\subseteq B_{0}$
and $|rad(B_{0}^{\tg})-rad(B_{0})|\in\N$. If $B$ is another ball
such that $F\subseteq B$, then $F\subseteq B\cap B_{0}^{\tg}$, and
$B\cap B_{0}^{\tg}\neq\emptyset$ is also a ball. Also, as we have
shown, $B_{1}^{\tg\tg}\subseteq B$, so $B_{1}^{\tg\tg}\subseteq B\cap B_{0}^{\tg}\subseteq B_{0}$.
Hence by the choice of $B_{0}^{\tg}$, $B_{0}^{\tg}=B\cap B_{0}^{\tg}\subseteq B$.
This shows that $B_{0}^{\tg}$ is the unique minimal ball containing
$F$. Finally, let $D$ be a ball and assume $F\subseteq D$. By Axiom \ref{enu:axiom_exactly_p^k_possible_extensions}
write $D=\bigsqcup_{t=1}^{p}D_{t}^{\tg\tg}$ with $rad(D_{t}^{\tg\tg})=rad(D)+1$.
Then $D$ is minimal if and only if for all $t$, $F\not\subseteq D_{t}^{\tg\tg}$,
iff there are $t\neq s$ such that $F\cap D_{t}^{\tg\tg}\neq\emptyset$
and $F\cap D_{s}^{\tg\tg}\neq\emptyset$. 
\end{proof}
Let $F$ be a swiss cheese. By \propref{bottom_ball_well_defined}
we may write $F=B_{0}\backslash\bigcup_{i=1}^{n}B_{i}$ where $B_{0}$
is the unique minimal ball containing $F$. We may also assume that
$\{B_{i}\}_{i=1}^{n}$ are nonempty, pairwise disjoint and contained
in $B_{0}$. Unless stated otherwise, all representations are assumed
to satisfy these conditions. We call $B_{0}$ the \emph{outer ball}
of $F$, and define the \emph{radius} of $F$ to be $rad(F):=rad(B_{0})$.
We also call $\{B_{i}\}_{i=1}^{n}$ the \emph{holes} of $F$. Note
that this representation is still not unique (unless there are no
holes at all), as each hole may always be split into $p$ smaller
holes, and sometimes there are sets of $p$ holes which may each be
combined into a single hole. There is a canonical representation for
$F$, namely, the one with the minimal number of holes. But we will
not use it. Rather, when dealing with holes without mentioning a specific
representation, either the intended representation is clear from
the context (e.g., when using \remref{splitting_swiss_cheeses} (\ref{enu:splitting_general})
or (\ref{enu:splitting_to_proper}) to split a swiss cheese with a given
representation), or we may choose any representation and stick with
it.

We say that $B_{i}$ is a \emph{proper hole} of $F$ if $|rad(B_{i})-rad(B_{0})|\notin\N$.
We call $F$ a \emph{proper cheese} if all of its holes are proper.
Note that by \remref{A-finite-number-of-proper-holes-cannot-cover-a-ball},
being a proper cheese does not depend on the representation of the
holes.\\

~

\begin{rem}
\label{rem:splitting_swiss_cheeses}~
\begin{enumerate}
\item \label{enu:a_proper_cheese_is_always_written_nontrivially}If $B_{0}$,$B_{1},\dots,B_{n}$
are balls such that for all $i\geq1$, $B_{i}\subseteq B_{0}$ and
$|rad(B_{i})-rad(B_{0})|\notin\N$, then $B_{0}$ is the outer ball
of the swiss cheese $F=B_{0}\backslash\bigcup_{i=1}^{n}B_{i}$, which is
therefore proper. 
\item \label{enu:splitting_general}Let $F$ be a swiss cheese, and let
$k\geq1$. Then $F$ may be written as a disjoint union $F=\bigsqcup_{i=1}^{l}F_{i}$,
where $1\leq l\leq p^{k}$, and for each $i$, $F_{i}$ is a swiss
cheese such that $rad(F_{i})\geq rad(F)+k$ and $|rad(F_{i})-rad(F)|\in\N$.
Each hole of $F_{i}$ is already a hole of $F$, and each hole of
$F$ is a hole of at most one of the $\{F_{i}\}_{i}$. \\
If $F$ is proper, then $l=p^{k}$ and each $F_{i}$ is a proper cheese
of radius $rad(F_{i})=rad(F)+k$. In this case, each hole of $F$
is a hole of exactly one of the $\{F_{i}\}_{i}$.
\item \label{enu:splitting_to_proper}Let $F=B_{0}\backslash\bigcup_{i=1}^{n}B_{i}$
be a swiss cheese, let $I_{1}=\{1\leq i\leq n\,:\,|rad(B_{i})-rad(B_{0})|\in\N\}$,
and let $k_{0}=\mbox{max}\{rad(B_{i})-rad(B_{0})\,:\, i\in I_{1}\}\in\N$.
Then for each $k\geq k_{0}$, $F$ may be written as a disjoint union
$F=\bigsqcup_{i=1}^{l}F_{i}$, where $1\leq l\leq p^{k}$, and for
each $i$, $F_{i}$ is a \textbf{proper} swiss cheese of radius $rad(F_{i})=rad(F)+k$.
Each hole of $F_{i}$ is already a proper hole of $F$, and each proper
hole of $F$ is a hole of exactly one of the $\{F_{i}\}_{i}$.
\item Let $F^{\tg}$,$F^{\tg\tg}$ be two swiss cheeses of radiuses $\gamma^{\tg}$,$\gamma^{\tg\tg}$
respectively, and let $\gamma=\mbox{max}\{\gamma^{\tg},\gamma^{\tg\tg}\}$.
Then $F^{\tg}\cap F^{\tg\tg}$ is either empty, or also a swiss cheese
of radius $rad(F^{\tg}\cap F^{\tg\tg})\geq\gamma$ such that $|rad(F^{\tg}\cap F^{\tg\tg})-\gamma|\in\N$.
\item If both $F^{\tg}$,$F^{\tg\tg}$ are proper and $\gamma^{\tg}=\gamma^{\tg\tg}$,
and if $F^{\tg}\cap F^{\tg\tg}$ is not empty, then $F^{\tg}$,$F^{\tg\tg}$
have the same outer ball, and $F^{\tg}\cap F^{\tg\tg}$ is also a
proper cheese of the same outer ball. 
\end{enumerate}
\end{rem}

\begin{lem}\label{lem:union_of_cheeses_with_nonempty_intersection_is_also_a_cheese}
Let $F$,$F^{\tg}$ be two swiss cheeses of radiuses $\gamma \leq \gamma ^{\tg}$
respectively.
If $F \cap F ^{\tg}\neq\emptyset$, then $F \cup F ^{\tg}$
is also a swiss cheese, of radius exactly $\gamma$. The set of holes
of $F \cup F ^{\tg}$ is a subset of the union of the set of
holes of $F $ and the set of holes of $F ^{\tg}$. \end{lem}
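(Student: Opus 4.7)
The plan is to fix representations $F = B_0\setminus\bigcup_{i=1}^{n}B_i$ and $F' = B_0'\setminus\bigcup_{j=1}^{m}B_j'$ with $B_0,B_0'$ the outer balls furnished by \propref{bottom_ball_well_defined}, and then to compute $B_0\setminus(F\cup F')$ explicitly as a finite union of balls drawn from $\{B_i\}_i\cup\{B_j'\}_j$.

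First I would note that $F\cap F'\neq\emptyset$ forces $B_0\cap B_0'\neq\emptyset$, so the nested-or-disjoint dichotomy for balls together with $\gamma\leq\gamma'$ gives $B_0'\subseteq B_0$. In particular $F\cup F'\subseteq B_0$; and since $F$ has outer ball $B_0$, any ball containing $F\cup F'$ must also contain $B_0$. Hence the outer ball of $F\cup F'$ is $B_0$, which already delivers the radius statement.

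The main step is the observation that, fixing any witness $x\in F\cap F'$, we have $x\in B_0'$ while $x\notin B_i$ for every $i$, so $B_0'\not\subseteq B_i$ for any hole $B_i$ of $F$. By the dichotomy again, each $B_i$ is therefore either disjoint from $B_0'$ or contained in $B_0'$. I would then simply unfold: $x\in B_0\setminus(F\cup F')$ iff $x\in\bigcup_i B_i$ and ($x\notin B_0'$ or $x\in\bigcup_j B_j'$). Splitting the union over $i$ by the dichotomy, each $B_i$ disjoint from $B_0'$ contributes $B_i$ itself, and each $B_i\subseteq B_0'$ contributes $\bigcup_j(B_i\cap B_j')$; each nonempty $B_i\cap B_j'$ is itself a ball, equal to whichever of $B_i$ or $B_j'$ has the larger radius. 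Hence $B_0\setminus(F\cup F')$ is a finite union of balls from $\{B_i\}_i\cup\{B_j'\}_j$, so $F\cup F'$ is a swiss cheese with outer ball $B_0$ and with holes among $\{B_i\}_i\cup\{B_j'\}_j$.

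The only obstacle would be the configuration $B_0'\subseteq B_i$ for some hole $B_i$, which would leave $B_i\setminus B_0'$ as a set not expressible using only the prescribed balls; the hypothesis $F\cap F'\neq\emptyset$ is used precisely to forbid this, and that is essentially the entire content of the lemma. The special case $\gamma=\gamma'$ is an immediate instance, since equal-radius balls that meet must coincide, forcing $B_0=B_0'$, so only the ``contained in $B_0'$'' branch of the dichotomy occurs.
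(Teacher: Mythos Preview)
Your proof is correct and follows essentially the same approach as the paper: both arguments use $F\cap F'\neq\emptyset$ to rule out the configuration $B_0'\subseteq B_i$, obtain the dichotomy that each hole $B_i$ is either disjoint from $B_0'$ or contained in it, and conclude that the complement of $F\cup F'$ in $B_0$ is a finite union of balls from $\{B_i\}_i\cup\{B_j'\}_j$. The only cosmetic difference is that the paper computes $F'\setminus F$ first and writes $F\cup F' = F\cup(F'\setminus F)$, whereas you compute $B_0\setminus(F\cup F')$ directly; the content is the same.
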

\begin{proof}
 Write $F =B_{0} \backslash\bigcup_{i=1}^{n}B_{i} $,
$F ^{\tg}=B_{0} ^{\tg}\backslash\bigcup_{j=1}^{m}B_{j} ^{\tg}$. If
$F \cap F ^{\tg}\neq\emptyset$ then $B_{0} \cap B_{0} ^{\tg}\neq\emptyset$,
hence $B_{0} \supseteq B_{0} ^{\tg}$. Therefore, 
\[
F ^{\tg}\backslash F =F ^{\tg}\backslash\left(B_{0} \backslash\bigcup_{i=1}^{n}B_{i} \right)=F ^{\tg}\backslash B_{0} \cup\left(F ^{\tg}\cap\bigcup_{i=1}^{n}B_{i} \right)=\bigcup_{i=1}^{n}F ^{\tg}\cap B_{i} \mbox{.}
\]
For each $i$: if $B_{0} ^{\tg}\cap B_{i} =\emptyset$ then $F ^{\tg}\cap B_{i} =\emptyset$.
Otherwise, as $B_{0} \supseteq B_{0} ^{\tg}$, we also get $B_{i} \subseteq B_{0} ^{\tg}$
($B_{i} \supseteq B_{0} ^{\tg}$ is impossible, as it implies
$F \cap F ^{\tg}=\emptyset$), and in this case, $F ^{\tg}\cap B_{i} =B_{i} \backslash\bigcup_{j=1}^{m}(B_{i} \cap B_{j} ^{\tg})$.
Together, we get 
\[
F \cup F ^{\tg}=F \cup(F ^{\tg}\backslash F )=B_{0} \backslash\left(\bigcup_{i\in I_1}
B_{i} \cup \bigcup_{i\in I_2} \bigcup_{j=1}^{m}(B_{i} \cap B_{j} ^{\tg})\right)
\]
where $I_1$ is the set of $i$ such that $B_0'\cap B_i = \emptyset$ and $I_2$ is the set of $i$ such that $B_i\subseteq B_0'$.
This is a swiss cheese, and as $F \subseteq F \cup F ^{\tg}\subseteq B_{0} $
and $rad(F )=rad(B_{0} )=\gamma $, also $rad(F \cup F ^{\tg})=\gamma$
and $B_{0} $ is its outer ball. For each $i$ such that $B_{i} \subseteq B_{0} ^{\tg}$
and each $j$, either $B_{i} \cap B_{j} ^{\tg}=\emptyset$ (in
which case $B_{i} \cap B_{j} ^{\tg}$ does not appear as a hole of $F\cup F'$), or $B_{i} \cap B_{j} ^{\tg}=B_{i} $
or $B_{i} \cap B_{j} ^{\tg}=B_{j} ^{\tg}$, so the last part
holds.
\end{proof}
~

Sometimes we want disjoint swiss cheeses to also have disjoint outer
balls, but unfortunately, that is not always possible. An example
for this is a union of two swiss cheeses, $F_{1}\cup F_{2}$, with $F_{2}\subseteq B$
where $B$ is one of the holes of $F_{1}$. If $|rad(B)-rad(F_{1})|\in\N$,
we may rewrite $F_{1}$ as a union of swiss cheeses of radius $rad(B)$,
and, together with $F_{2}$, we have a union of swiss cheeses with disjoint
outer balls. But if $|rad(B)-rad(F_{1})|\notin\N$, we cannot
do such a thing. 

\begin{defn}
A \emph{pseudo swiss cheese} is a definable set $P$ such that there
is a swiss cheese $F$ with outer ball $B$ such that $F\subseteq P\subseteq B$.
By the following remark, we may call $B$ the \emph{outer ball} of
$P$, and define the \emph{radius} of $P$ to be $rad(P):=rad(B)$.
We also call $P$ \emph{pseudo proper cheese} if there is a proper cheese $F$ with
outer ball $B$ such that $F\subseteq P\subseteq B$.
 \end{defn}
 

\begin{rem}
\begin{enumerate}
\item In the previous definition, $B$ is uniquely determined by $P$. Indeed, suppose $F_{1}$,$F_{2}$ are two swiss cheeses with outer balls
$B_{1}$,$B_{2}$ respectively, such that $F_{1}\subseteq P\subseteq B_{1}$
and $F_{2}\subseteq P\subseteq B_{2}$. Then $rad(B_{1})=rad(F_{1})\geq rad(B_{2})$
and $rad(B_{2})=rad(F_{2})\geq rad(B_{1})$, so $rad(B_{1})=rad(B_{2})$.
Also, $P\subseteq B_{1}\cap B_{2}\neq\emptyset$, so we must have
$B_{1}=B_{2}$.
\item For every $k\geq1$,
every proper pseudo swiss cheese of radius $\gamma$ can be written
as a union of exactly $p^{k}$ proper pseudo cheeses with disjoint
outer balls of radius exactly $\gamma+k$. 
\item Note that the analogue to \remref{splitting_swiss_cheeses} (\ref{enu:splitting_general})
is not true for pseudo swiss cheeses. For example, let $B$ be a ball of
radius $\gamma$, let $\{B_{i}\}_{i=0}^{p-1}$ be all the balls of
radius $\gamma+1$ contained in $B$, let $\{B_{i,j}\}_{j=0}^{p-1}$
be all the balls of radius $\gamma+2$ contained in $B_{i}$, and
let $C \subseteq B_{0,1}$ be a ball of radius $\delta>\gamma$
such that $|\delta-\gamma|\notin\N$. Then $P=C\sqcup\bigsqcup_{i=0}^{p-1}B_{i,0}$
is a pseudo swiss cheese of radius $\gamma$, but cannot be written as $\leq p$
pseudo swiss cheeses of radius $\geq\gamma+1$, because $P\cap B_{0}$ is
not a pseudo swiss cheese. 
Also note that the intersection of two pseudo swiss cheeses is not necessarily
a single pseudo swiss cheese. For example, take $P\cap B_{0}$ from above.
\end{enumerate}
\end{rem}

\begin{lem}
\label{lem:about_unions_of_pseudo_cheeses}~
\begin{enumerate}
\item \label{enu:union_of_pseudo_cheeses_with_intersecting_balls_is_also_a_pseudo_cheese}Let
$P_{1}$,$P_{2}$ be two pseudo swiss cheeses with outer balls $B_{1}$,$B_{2}$
respectively, such that $rad(B_{1})\geq rad(B_{2})$. If $B_{1}\cap B_{2}\neq\emptyset$
then $P_{1}\cup P_{2}$ is also a pseudo swiss cheese, with outer
ball $B_{2}$. If $P_{2}$ is proper, then $P_{1}\cup P_{2}$ is also
proper.
\item \label{enu:writing_unions_of_pseudo_cheeses_to_be_with_disjoint_balls}Any
finite union of pseudo swiss cheeses may be written as a union of
pseudo swiss cheeses having disjoint outer balls. Also, any finite
union of pseudo proper cheeses may be written as a union of
pseudo proper cheeses having disjoint outer balls. 
\end{enumerate}
\end{lem}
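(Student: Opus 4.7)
The plan for part \textit{(1)} is to combine the ultrametric dichotomy for balls with the very definition of pseudo swiss cheese. First, choose swiss cheeses $F_i\subseteq P_i\subseteq B_i$ with outer ball $B_i$, for $i=1,2$, with $F_2$ taken to be a proper cheese if $P_2$ is pseudo proper. Since any two balls are either disjoint or nested, and $B_1\cap B_2\neq\emptyset$ with $rad(B_1)\geq rad(B_2)$, we have $B_1\subseteq B_2$, so $P_1\cup P_2\subseteq B_2$. Then the chain $F_2\subseteq P_2\subseteq P_1\cup P_2\subseteq B_2$ exhibits a swiss cheese with outer ball $B_2$ inside $P_1\cup P_2$, so the union is a pseudo swiss cheese with outer ball $B_2$ (uniqueness of the outer ball was recorded in the preceding remark), and the witness $F_2$ is a proper cheese whenever $P_2$ is pseudo proper.

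For part \textit{(2)}, I would proceed by a greedy merging procedure on the given family $P_1,\dots,P_n$ of pseudo swiss cheeses. As long as there exist indices $i\neq j$ with $B_i\cap B_j\neq\emptyset$, replace $P_i$ and $P_j$ by the single set $P_i\cup P_j$, which is again a pseudo swiss cheese by part \textit{(1)}. Each such step strictly decreases the number of terms, hence the procedure terminates after at most $n-1$ steps in a decomposition of $P_1\cup\dots\cup P_n$ as a union of pseudo swiss cheeses with pairwise disjoint outer balls. For the proper version, whenever both $P_i$ and $P_j$ are pseudo proper so is the one whose outer ball becomes the outer ball of the merged set (namely, the one with the larger ball, i.e.\ the smaller radius), so part \textit{(1)} yields a pseudo proper cheese at every merging step and the final decomposition consists of pseudo proper cheeses.

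The only point requiring some care is the asymmetry in the pseudo properness clause of \textit{(1)}: the union $P_1\cup P_2$ inherits pseudo properness only from the factor $P_2$ whose outer ball coincides with the outer ball of the union. In the merging procedure this is harmless because both factors are pseudo proper to begin with, so a suitable proper cheese with the correct outer ball is always available. Apart from this small bookkeeping issue, both parts of the lemma are routine manipulations built on top of the ultrametric inequality, so I do not anticipate any genuine obstacle.
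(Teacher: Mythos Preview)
Your proof is correct and essentially the same as the paper's. Part \textit{(1)} is identical; for part \textit{(2)} the paper directly identifies the maximal balls among the $B_i$ and groups the $P_i$ accordingly in one step, whereas you merge pairs iteratively, but this is a cosmetic difference and your handling of the asymmetry in the proper case is exactly right.
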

\begin{proof}
~
 We prove \textit{(\ref{enu:union_of_pseudo_cheeses_with_intersecting_balls_is_also_a_pseudo_cheese})}. $B_{1}\cap B_{2}\neq\emptyset$ and $rad(B_{1})\geq rad(B_{2})$,
so $B_{1}\subseteq B_{2}$ and therefore also $P_{1}\subseteq B_{2}$.
Let $F_{2}$ be a swiss cheese with outer ball $B_{2}$ such that
$F_{2}\subseteq P_{2}\subseteq B_{2}$. Then $F_{2}\subseteq P_{1}\cup P_{2}\subseteq B_{2}$.
If $P_{2}$ is proper, then we may take $F_{2}$ to be proper, and
so $P_{1}\cup P_{2}$ is also proper.\\
We prove \textit{(\ref{enu:writing_unions_of_pseudo_cheeses_to_be_with_disjoint_balls})}. Let $A=\bigcup_{i=1}^{n}P_{i}$ such that for each $i$, $P_{i}$
is a pseudo swiss cheese with outer ball $B_{i}$. Let $\{B_{j}^{\tg}\}_{j=1}^{m}$
be the set of all the maximal balls (with respect to inclusion) among
$\{B_{i}\}_{i=1}^{n}$. Then $\{B_{j}^{\tg}\}_{j=1}^{m}$ are pairwise
disjoint. For each $1\leq j\leq m$, let $I_{j}=\{i\,:\, B_{i}\cap B_{j}^{\tg}\neq\emptyset\}$
and $P_{j}^{\tg}=\bigcup_{i\in I_{j}}P_{i}$. So $\{1,\dots,n\}=\bigsqcup_{j=1}^{m}I_{j}$
and therefore $A=\bigcup_{j=1}^{m}P_{j}^{\tg}$. By \textit{(\ref{enu:union_of_pseudo_cheeses_with_intersecting_balls_is_also_a_pseudo_cheese})},
$P_{j}^{\tg}$ is a pseudo swiss cheese with outer ball $B_{j}^{\tg}$.
If for each $i$, $P_{i}$ is proper, then by \textit{(\ref{enu:union_of_pseudo_cheeses_with_intersecting_balls_is_also_a_pseudo_cheese})},
for each $j$, $P_{j}^{\tg}$ is also proper.
\end{proof}

\begin{rem}
\label{rem:D_m_with_m_coprime_to_p_are_dense}The valuation $v_{p}$
induces a topology on $\mathcal{M}$, generated by the balls. By Lemma \ref{lem:Preservation of a solution} \textit{(3)},
if $\mbox{gcd}(m,p)=1$, then the sets defined by $D_{m}(x-r)$ are
dense in $\mathcal{M}$. 
\end{rem}

\begin{lem}
\label{lem:reduction_single_pseudo_cheese_to_ball}Let $P$ be a pseudo
swiss cheese with outer ball $B$ and radius $\alpha$, and assume
$0\in B$. Let $G$ be a dense subgroup of $\mathcal{M}$, and let
$A=P\cap G$. Then there exists $N\in\N$ and $a_{1},\dots,a_{N}\in B\cap G$
such that $\bigcup_{i=1}^{N}(A+a_{i})=B\cap G$.\end{lem}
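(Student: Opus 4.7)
The plan is to exploit the subgroup structure of $B$ (the hypothesis $0\in B$ forces $B=\{x\,:\,v(x)\geq\alpha\}$, which is a subgroup of $\mathcal{M}$) together with the density of $G$, and to cover $B\cap G$ by exactly $p^{k}$ translates of $A$ for a well-chosen standard integer $k$.

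First I would unpack the pseudo cheese: by definition there is a swiss cheese $F=B\setminus\bigcup_{j=1}^{n}B_{j}$ with outer ball $B$ satisfying $F\subseteq P$, the $B_{j}$ being pairwise disjoint nontrivial balls in $B$ of radii $\beta_{j}>\alpha$. Since $F\cap G\subseteq A$, and since $A+a\subseteq B\cap G$ whenever $a\in B\cap G$ (using that both $B$ and $G$ are subgroups of $\mathcal{M}$), it suffices to find $a_{1},\dots,a_{N}\in B\cap G$ such that for every $g\in B\cap G$ some $g-a_{i}$ lies in $F\cap G$.

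Next, choose a standard integer $k$ with $p^{k}>n$ and $k\geq\beta_{j}-\alpha$ for every $j$ for which $\beta_{j}-\alpha\in\N$; such $k$ exists because there are only finitely many holes and only finitely many of them have standard radius difference. Set $H:=\{x\,:\,v(x)\geq\alpha+k\}$, a subgroup of $B$. By Axiom~\ref{enu:axiom_exactly_p^k_possible_extensions}, $B$ decomposes as a disjoint union of exactly $p^{k}$ cosets of $H$, each a nonempty ball of radius $\alpha+k$. With this choice of $k$, every hole $B_{j}$ lies inside a single coset of $H$: if $\beta_{j}-\alpha$ is nonstandard then $\beta_{j}>\alpha+k$ automatically, while if $\beta_{j}-\alpha$ is a standard integer then $\beta_{j}\geq\alpha+k$ by construction.

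By density of $G$, pick one representative $a_{i}\in G$ from each of the $p^{k}$ cosets of $H$ in $B$. For any $g\in B\cap G$, the map $a\mapsto g-a$ permutes the cosets of $H$ in $B$, so the elements $g-a_{1},\dots,g-a_{p^{k}}$ hit each coset exactly once. Since each hole $B_{j}$ is contained in a single coset, $g-a_{i}\in B_{j}$ holds for at most one $i$, and therefore at most $n<p^{k}$ indices $i$ satisfy $g-a_{i}\in\bigcup_{j}B_{j}$. Some $i$ then yields $g-a_{i}\in F$; as $g,a_{i}\in G$ this gives $g-a_{i}\in F\cap G\subseteq A$ and $g\in A+a_{i}$. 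Taking $N=p^{k}$ produces the desired conclusion. The main subtlety is the uniform choice of $k$: nonstandard differences $\beta_{j}-\alpha$ automatically exceed any standard $k$, while the finitely many standard differences can all be dominated by a single standard $k$.
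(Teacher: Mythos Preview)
Your pigeonhole idea is sound, but the inequality you impose on $k$ points the wrong way. You require $k\geq\beta_{j}-\alpha$ for the standard differences and then assert that ``$\beta_{j}\geq\alpha+k$ by construction''; in fact $k\geq\beta_{j}-\alpha$ gives $\beta_{j}\leq\alpha+k$, so such a hole $B_{j}$ is a \emph{union} of $p^{\,k-(\beta_{j}-\alpha)}$ cosets of $H$, not contained in one. Consequently the count ``at most $n$ of the $g-a_{i}$ lie in a hole'' breaks down: a single standard-difference hole can swallow many of the $g-a_{i}$.

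Reversing the inequality to $k\leq\beta_{j}-\alpha$ does make each hole sit inside one coset, but then the two constraints $p^{k}>n$ and $k\leq\min_{j}(\beta_{j}-\alpha)$ need not be simultaneously satisfiable. For instance (with $p=2$), take $F=B\setminus\bigcup_{j=1}^{100}B_{j}$ where the $B_{j}$ are pairwise disjoint with radii $\alpha+1,\alpha+2,\dots,\alpha+100$: the minimum standard difference is $1$, forcing $k\leq1$, yet $n=100>2=p^{1}$. So the argument as written has a genuine gap.

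The paper closes this gap by first producing an actual ball $D\subseteq F$ of radius $\alpha+k'$ for some standard $k'$ (via \remref{splitting_swiss_cheeses}\,(\ref{enu:splitting_to_proper}) and then \remref{splitting_swiss_cheeses}\,(\ref{enu:splitting_general}) together with $p^{s}>s$), after which the translates of $D\cap G$ already cover $B\cap G$. Your pigeonhole argument can be repaired along the same lines: first pass from $F$ to a proper sub-cheese $F'\subseteq F$ of radius $\alpha+k_{0}$ (eliminating all standard-difference holes), then run your argument inside the outer ball $B'$ of $F'$ with any $k$ satisfying $p^{k}>|\text{holes of }F'|$; every remaining hole now has nonstandard difference and hence lies in a single coset of the new $H$. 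A final layer of $p^{k_{0}}$ translates (with shifts in $G$) passes from $B'\cap G$ back up to $B\cap G$.
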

\begin{proof}
Observe that $B$ is a subgroup of $\mathcal M$ since $0\in B$. Let $F$ be a swiss cheese with outer ball $B$ such that $F\subseteq P\subseteq B$.
By \remref{splitting_swiss_cheeses} (\ref{enu:splitting_to_proper}),
for some finite $k$ we may find a proper cheese $F^{\tg}\subseteq F$
of radius $\alpha+k$. Let $s$ be the number of holes in $F^{\tg}$.
By \remref{splitting_swiss_cheeses} (\ref{enu:splitting_general}),
we may write $F^{\tg}$ as a union of exactly $p^{s}$ proper cheeses
of radius $\alpha+k+s$. As $p^{s}>s$, at least one of these
proper cheeses must have no holes, i.e., must be a ball, say $D$. Let $x\in D$ and $D_0 = D-x$. Then $D_0$
is a subgroup of $B$ of index $N:=p^{k+s}$. Let $x_{1},\dots,x_{N}$
be representatives of the cosets, so $B=\bigcup_{i=1}^{N}x_{i}+D_0$.
For each $i$, let $a_{i}\in x_{i}+D_0\cap G$. As $a_{i}\in B\cap G$
and $A\subseteq B\cap G$, we have $(A+a_{i})\subseteq B\cap G$, and
therefore $\bigcup_{i=1}^{N}(A+a_{i})\subseteq B\cap G$. On the other
hand, as $A\supseteq D\cap G$, we also have $\bigcup_{i=1}^{N}(A+a_{i})\supseteq B\cap G$.
\end{proof}

\begin{lem}\label{lem:From_union_of_cheeses_of_different_radiuses_to_a_single_radius}
Let $A=G\cap\bigsqcup_{i=1}^{n}F_{i}$ where $G$ is a dense subgroup
of $\mathcal{M}$ and $\{F_{i}\}_{i=1}^{n}$ are disjoint proper
cheeses with nonstandard radiuses. Then there are $N,m\in\N$ and
$c_{1},\dots,c_{N}\in G$ such that $\bigcap_{i=1}^{N}(A-c_{i})=G\cap\bigsqcup_{i=1}^{m}P_{i} $
with $P_{i} $ \uline{pseudo}  proper cheeses with disjoint
outer balls, all of the same nonstandard radius, and $0\in P_{1} $.
 \end{lem}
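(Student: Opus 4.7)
The strategy is to translate once so that $0$ lies in some $F_i$, then intersect with further translates of $A$ to force all nonempty pieces to share a common outer-ball radius, and finally reorganize using Lemma~\ref{lem:about_unions_of_pseudo_cheeses}~\textit{(2)} to obtain disjoint outer balls.

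\emph{Step 1 (translation).} Since $G$ is a dense subgroup of $\mathcal{M}$ and every proper cheese contains a ball (by the splitting remarks together with Lemma~\ref{lem:reduction_single_pseudo_cheese_to_ball}), we pick $c_1 \in G \cap F_1$. Replacing $A$ by $A - c_1 = G \cap \bigsqcup_i(F_i - c_1)$ puts $0 \in F_1 - c_1$; after renaming, assume $0 \in F_1$, while each $F_i$ retains its nonstandard radius $\gamma_i$.

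\emph{Step 2 (intersection with translates).} Let $\gamma^{*} := \max_i \gamma_i$ and let $i_0$ realize this maximum. For each $i \in \{2,\ldots,n\}$ pick $c_i \in G \cap F_i$. Writing $B_i$ for the outer ball of $F_i$, pairwise disjointness of the $F_j$ yields
\[
\bigcap_{i=1}^{n}(A-c_i) \;=\; G\cap\bigsqcup_{(j_1,\ldots,j_n)\in [n]^n}\bigcap_{i=1}^{n}(F_{j_i}-c_i).
\]
The diagonal piece $\bigcap_i(F_i-c_i)$ contains $0$ and, since each $B_i-c_i$ contains $0$, its outer ball is the smallest among them --- namely $B_{i_0}-c_{i_0}$, of radius exactly $\gamma^{*}$; this will become $P_1$. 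For any other tuple $(j_1,\ldots,j_n)$, the corresponding piece, when nonempty, has outer ball $\bigcap_i (B_{j_i}-c_i)$ of radius $\max_i \gamma_{j_i}$.

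\emph{Main obstacle and conclusion.} The delicate point is to ensure that every nonempty piece has outer radius \emph{exactly} $\gamma^{*}$ and is pseudo \emph{proper}. For the radius issue, tuples $\mathbf{j}$ not involving $i_0$ would yield outer balls of strictly smaller radius; we force such pieces to be empty by inserting additional translates of $A$ into the intersection, for instance extra translates $A - c'$ for further $c' \in F_{i_0}\cap G$ chosen in general position (using density of $G$ and Lemma~\ref{lem:Preservation of a solution}~\textit{(4)} to engineer the required incompatibility of constraints whenever $i_0 \notin \{j_1,\ldots,j_n\}$). For properness, the holes of each remaining piece are unions of translated holes of the underlying $F_{j_i}$ trimmed to the new outer ball; a hole originating in some $F_j$ with $\gamma_j < \gamma^{*}$ may fail to have nonstandard distance from $\gamma^{*}$, in which case we enlarge it to the unique enclosing ball whose radius exceeds $\gamma^{*}$ by a nonstandard amount. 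This enlargement shrinks each pseudo cheese but preserves pseudo-proper-cheesehood and keeps $0$ inside its piece. Once every nonempty piece is a pseudo proper cheese of outer radius $\gamma^{*}$, Lemma~\ref{lem:about_unions_of_pseudo_cheeses}~\textit{(2)} rewrites the union so the outer balls are pairwise disjoint, completing the proof.
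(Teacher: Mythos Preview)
Your approach has a genuine gap at the ``main obstacle'' step, and the direction you chose --- aiming for the \emph{maximum} radius $\gamma^{*}$ --- is the source of the trouble.

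Consider the following configuration, which is allowed by the hypotheses: $n=2$, $\gamma_1 < \gamma_2 = \gamma^{*}$, and $B_2 \subseteq B_1$ (so $F_2$ sits inside a proper hole of $F_1$). Now take any finite family of translates $c' \in F_{i_0} \cap G = F_2 \cap G$ as you propose. For each such $c'$ we have $c' \in B_1$, hence $B_1 - c'$ is the unique ball of radius $\gamma_1$ containing $0$; it is the \emph{same} ball for every choice of $c'$. The ``all-$1$'s'' piece $\bigcap_{c'} (F_1 - c')$ therefore has outer ball of radius $\gamma_1$, and since each $F_1 - c'$ is a proper cheese with that outer ball, their intersection is again a nonempty proper cheese of radius $\gamma_1$ (by \remref{splitting_swiss_cheeses}~(5) and \remref{A-finite-number-of-proper-holes-cannot-cover-a-ball}). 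No finite number of extra translates by points of $F_2$ --- or indeed of $B_1$ --- will kill it. Your ``general position'' phrase hides an impossibility, not a technicality.

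This is precisely why the paper goes the other way: it targets the \emph{minimum} radius $\gamma_1$ and uses pseudo cheeses to absorb the higher-radius pieces (via \lemref{about_unions_of_pseudo_cheeses}~(\ref{enu:writing_unions_of_pseudo_cheeses_to_be_with_disjoint_balls})). The intersection step in the paper is designed to reduce the number $s_1$ of pieces at radius $\gamma_1$, not to reach $\gamma^{*}$; and the key arithmetic is that when you intersect two translates chosen from cheeses whose outer balls of radius $\gamma_1$ are \emph{distinct}, the count $s_1$ strictly drops.

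A second, independent problem: your properness fix (``enlarge the hole to a ball whose radius exceeds $\gamma^{*}$ by a nonstandard amount'') changes the set. The lemma requires the output to be \emph{exactly} $\bigcap_i (A - c_i)$, not a convenient subset of it; you are not free to shrink the pseudo cheeses after the fact.
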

 
\begin{proof}
It is of course enough to prove the lemma without the requirement
$0\in P_{1} $, as we may then arrange that by shifting by some
$c\in G\cap P_{1} $.

\emph{Preparation step.} By \remref{splitting_swiss_cheeses} \textit{(2)}, if $F$ is a proper cheese of infinite radius $\gamma$ then, for all $k\geq 0$, $F$ can be written as a disjoint union of proper cheeses of radius $\gamma+k$. So there exists $\gamma_1,\dots,\gamma_n$, in distinct archimedean classes of $\Gamma$, such that
we can write
$$\bigsqcup_{i = 1} ^{n} F_i = \bigsqcup_{i=1}^m\bigsqcup_{j = 1}^{s_i} F_j^i,$$
where $s_1,\dots, s_m \geq 1$ and  for all $1\leq i\leq m$ and $1\leq j\leq s_i$, $rad(F_j^i) = \gamma_i$ and $F_j^i$ has a swiss cheese representation in which the radiuses of all the holes are in $$R:= \set{\alpha\in \Gamma : \text{for all $1\leq k\leq m$, if $\abs{\alpha - \gamma_k}\in \N$ then $\alpha\leq \gamma_k$}}.$$
We call this representation of A a \emph{good representation of A with respect to $\set{\gamma_i}_{i=1}^{m}$}.

~

If $m=1$, we already have what we
want, so we may assume that $m>1$. For each $i,j$, let $B_{j}^{i}$ be the outer ball of $F_{j}^{i}$. There
are two cases:

~

\emph{Case 1:} For every $1 < l\leq m$ and every $1\leq u\leq s_{l}$
there is some $1\leq v \leq s_{1}$ such that $B_{v}^{1}\cap B_{u}^{l}\neq\emptyset$.

This means that $\{B_{j}^{1}\}_{j=1}^{s_{1}}$
is the set of all the maximal balls with respect to inclusion among
$\{B_{j}^{i}\,:\, 1\leq i\leq m\,,\,1\leq j\leq s_{i}\}$. It follows that  $\{B_{j}^{1}\}_{j=1}^{s_{1}}$ are outer balls of pseudo proper cheese containing all the $F_j^i$. Indeed,
by the proof of Lemma~\ref{lem:about_unions_of_pseudo_cheeses} \textit{(2)},
we may write $$\bigsqcup_{i=1}^{m}\bigsqcup_{j=1}^{s_{i}}F_{j}^{i}=\bigsqcup_{j=1}^{s_{1}}P_{j},$$
where for each $j$, $P_{j}$ is a pseudo proper cheese such that $F_{j}^{1}\subseteq P_{j}\subseteq B_{j}^{1}$.
So these are pseudo proper cheeses with disjoint outer balls, all
of the same radius $\gamma_{1}$. So in this case we are done. 

~

\emph{Case 2:} There are $1< l\leq m$ and $1\leq v \leq s_{l}$
such that for every $1\leq j\leq s_{1}$, $B_{j}^{1}\cap B_{v}^{l}=\emptyset$.

Let $a\in F_{1}^{1}\cap G$ and $b\in F_{v}^{l}\cap G$ and set
$A^{\tg}=(A-a)\cap(A-b)$. Then $0\in A^{\tg}\neq\emptyset$. We show that $A'$ has a good representation with respect to a subset of $\set{\gamma_i}_{i=1}^{m}$, of the form $$A^{\tg}=G\cap\bigsqcup_{i=1}^{m'}\bigsqcup_{j=1}^{s_{i}^{\tg}}{\tilde{F}}_{j}^{i}$$
 such that either there are no more proper cheeses of radius $\gamma_1$, or the number $s_1'$ of proper cheeses of radius $\gamma_1$ is strictly less than $s_1$. By reiterating this process, it will terminate either to the case in which every proper cheese is of the same radius or to \emph{Case 1}, which proves the Lemma.

Write $A^{\tg}=G\cap(\bigsqcup_{i=1}^{m}\bigsqcup_{j=1}^{s_{i}}\bigsqcup_{q=1}^{m}\bigsqcup_{r=1}^{s_{i}}(F_{j}^{i}-a)\cap(F_{r}^{q}-b))$.
By the good representation, for each $i,j$ we write $F_{j}^{i}=B_{j}^{i}\backslash\bigsqcup_{t}B_{j,t}^{i}$
with $rad(B_{j,t}^{i})\in R$.

For every $i$ and $j,k$, if $B_{j}^{i}-a\neq B_{k}^{i}-b$,
then $(F_{j}^{i}-a)\cap(F_{k}^{i}-b)=\emptyset$, and if $B_{j}^{i}-a=B_{k}^{i}-b$,
then $(F_{j}^{i}-a)\cap(F_{k}^{i}-b)$ is a proper cheese
of radius $\gamma_{i}\geq\gamma_{1}$ such that all its holes can be written with radiuses in $R$. 

For every $i<i'$ and $j,k$, if $(B_{j}^{i}-a)\cap(B_{k}^{i^{\tg}}-b)=\emptyset$,
then also $(F_{j}^{i}-a)\cap(F_{k}^{i^{\tg}}-b)=\emptyset$.
Otherwise, $(B_{j}^{i}-a)\supseteq(B_{k}^{i^{\tg}}-b)$ and

$$(F_{j}^{i}-a)\cap(F_{k}^{i^{\tg}}-b)=((B_{k}^{i^{\tg}}-b)\backslash\bigsqcup_{t^{\tg}}(B_{k,t^{\tg}}^{i^{\tg}}-b))\backslash\bigsqcup_{t}(B_{j,t}^{i}-a).$$
For each $t$ such that $(B_{j,t}^{i}-a)\cap(B_{k}^{i^{\tg}}-b)\neq\emptyset$
there are three cases:
\begin{enumerate}
\item $rad(B_{k}^{i^{\tg}}-b)>rad(B_{j,t}^{i}-a)$. Then $(B_{k}^{i^{\tg}}-b)$ is included in the hole $(B_{j,t}^{i}-a)$ hence $(F_{j}^{i}-a)\cap(F_{k}^{i^{\tg}}-b)=\emptyset$. 
\item $rad(B_{k}^{i^{\tg}}-b)\leq rad(B_{j,t}^{i}-a)$ and $rad(B_{j,t}^{i}-a)$ is at finite distance from $\gamma_{i'}$.
As $rad(B_{j,t}^{i}-a)=rad(B_{j,t}^{i})\in  R$, we get $$rad(B_{k}^{i^{\tg}}-b)=rad(B_{k}^{i^{\tg}})=\gamma_{i^{\tg}}\geq rad(B_{j,t}^{i}-a).$$
So $rad(B_{k}^{i^{\tg}}-b)=rad(B_{j,t}^{i}-a)$, and so $(B_{k}^{i^{\tg}}-b)=(B_{j,t}^{i}-a)$
and therefore $(F_{j}^{i}-a)\cap(F_{k}^{i^{\tg}}-b)=\emptyset$. 
\item $rad(B_{k}^{i^{\tg}}-b)\leq rad(B_{j,t}^{i}-a)$ and $rad(B_{j,t}^{i}-a)$ is not at finite distance from $\gamma_{i'}$.
Then $B_{j,t}^{i}-a$ is a proper hole of $(F_{j}^{i}-a)\cap(F_{k}^{i^{\tg}}-b)$. 
\end{enumerate}
Therefore $(F_{j}^{i}-a)\cap(F_{k}^{i^{\tg}}-b)$ is either
empty or a proper cheese of radius $\gamma_{i^{\tg}}>\gamma_{i}\geq\gamma_{1}$
such that all its holes can be written with radiuses in $R$.

~

So $A'$ has a good representation that is the intersection of $G$ with a (nonempty) disjoint union
of proper cheeses, with radiuses among \emph{$\{\gamma_{i}\}_{i=1}^{m}$},
such that all their holes have radiuses in $R$. Now either $s_1 = 1$, hence $F_1^1$ is the only cheese of radius $\gamma_1$ in the good representation of $A$ and hence in the good representation of $A'$ there are no more proper cheeses of radius $\gamma_1$. Otherwise we have a good representation with respect to a subset of $\set{\gamma_i}_{i=1}^m$ of the form$$A^{\tg}=G\cap\bigsqcup_{i=1}^{m'}\bigsqcup_{j=1}^{s_{i}^{\tg}}\tilde{F}_{j}^{i}$$
where $s_1',\dots,s_{m'}' \geq 1$, and $s_1'$ is the number of cheese of radius $\gamma_1$. For
every $1\leq l \leq s_{1}^{\tg}$, there must be $j,k$ such
that $\tilde{F}_{l}^{1}=(F_{j}^{1}-a)\cap(F_{k}^{1}-b)$.
As $(F_{j}^{1}-a)\cap(F_{k}^{1}-b)\neq\emptyset\iff B_{j}^{1}-a=B_{k}^{1}-b$,
for every $j$ there is at most one $k$ such that $(F_{j}^{1}-a)\cap(F_{k}^{1}-b)\neq\emptyset$,
therefore $s_{1}^{\tg}\leq s_{1}$. Suppose towards contradiction
that $s_{1}^{\tg}=s_{1}$. Then for every $j$ there is exactly one
$k$ such that $(F_{j}^{1}-a)\cap(F_{k}^{1}-b)\neq\emptyset$,
in particular, for $j=1$ there is exactly one $l$ such that
$(F_{1}^{1}-a)\cap(F_{l}^{1}-b)\neq\emptyset$, and so also $B_{1}^{1}-a=B_{l}^{1}-b$.
By the choice of $a,b$, we have $0\in(B_{1}^{1}-a)\cap(B_{v}^{l}-b)=(B_{l}^{1}-b)\cap(B_{v}^{l}-b)$,
so $b\in B_{l}^{1}\cap B_{v}^{l}\neq\emptyset$, a contradiction.
Therefore $s_{1}^{\tg}<s_{1}$.
\end{proof}
~

\begin{lem}
\label{lem:From_union_of_cheeses_with_the_same_radius_to_single_cheese}Let
$A=G\cap\bigsqcup_{i=1}^{n}P_{i}$ where $G$ is a dense subgroup
of $\mathcal{M}$ and $\{P_{i}\}_{i=1}^{n}$ are pseudo proper cheeses
with disjoint outer balls, all of the same nonstandard radius $\alpha$,
such that $0\in P_{1}$. Then there exists $N\in\N$ and $c_{1},\dots,c_{N}\in G$
such that $\bigcap_{i=1}^{N}(A-c_{i})=G\cap P$ for some pseudo
proper cheese $P$ of nonstandard radius such that $0\in P$.\end{lem}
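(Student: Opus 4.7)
The plan is to induct on $n$. The base case $n=1$ is immediate: take $N=1$, $c_1 = 0 \in G \cap P_1$, which gives $\bigcap(A - c_i) = A = G \cap P_1$ with $P = P_1$ of nonstandard radius $\alpha$ and $0 \in P$.

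For the inductive step $n \geq 2$, let $B = B_1$ be the outer ball of $P_1$ (so $0 \in B$), choose coset representatives $b_i \in B_i$ with $b_1 = 0$, and consider the finite subset $V = \{b_1 + B, \dots, b_n + B\} \subseteq \mathcal{M}/B$, which contains $0$. I would split into two cases depending on whether $V$ is a subgroup of $\mathcal{M}/B$.

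In Case~(a), $V$ is a subgroup. Then $V_B := \bigsqcup_i B_i$ is a definable subgroup of $\mathcal{M}$ containing $B$ with $[V_B : B] = n$. By \lemref{Characterizing_definable_subgroups}, $V_B = D_m \cap \{v(x) \geq \gamma\}$ for some $m$ with $\gcd(m,p)=1$ and some $\gamma$. The inclusion $B \subseteq D_m$ forces $m = 1$: in the standard model, $\{v(x) \geq \alpha\} \subseteq D_m$ with $\gcd(m,p) = 1$ holds only for $m = 1$, so by elementarity the same is true in $\mathcal{M}$. Therefore $V_B = \{v(x) \geq \gamma\}$ is a ball of nonstandard radius $\gamma = \alpha - \log_p n$. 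Then $\bigsqcup_i P_i$ is itself a pseudo proper cheese with outer ball $V_B$: it contains the proper cheese $\bigsqcup_i F_i$ (where $F_i \subseteq P_i$ are the internal proper cheeses), whose holes are exactly the holes of the $F_i$'s and therefore still proper relative to the larger outer ball $V_B$. So we take $N = 1$, $c_1 = 0$ and set $P = \bigsqcup_i P_i$.

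In Case~(b), $V$ is not a subgroup, so there exists $b_j \in V$ with $V - b_j \neq V$ (necessarily $j \neq 1$). Using \remref{splitting_swiss_cheeses}~(2) every $P_k$ contains a ball of nonstandard radius, which meets the dense subgroup $G$, so I can pick $c \in P_j \cap G$. Setting $A' := A \cap (A - c)$, I compute
\[
A' = G \cap \bigsqcup_{k, l} \bigl(P_k \cap (P_l - c)\bigr),
\]
where the term indexed by $(k,l)$ is nonempty only when $B_k = B_l - c$, i.e.\ when $b_l \equiv b_k + b_j \pmod{B}$. The surviving pieces are indexed by $k$ with $b_k + b_j \in V$, i.e.\ by $V \cap (V - b_j)$, which has fewer than $n$ elements. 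For each such $(k,l)$, $F_k \cap (F_l - c)$ is nonempty (finitely many proper holes cannot cover $B_k$, by \remref{A-finite-number-of-proper-holes-cannot-cover-a-ball}) and is a proper cheese with outer ball $B_k$ by \remref{splitting_swiss_cheeses}~(5), so $P_k \cap (P_l - c)$ is a pseudo proper cheese with outer ball $B_k$. These outer balls are disjoint and all of nonstandard radius $\alpha$, and $0$ lies in the piece $P_1 \cap (P_j - c)$. Applying the inductive hypothesis to $A'$ yields witnesses $c'_1,\dots,c'_{N'} \in G$ with $\bigcap_i (A' - c'_i) = G \cap P$ for some pseudo proper cheese $P$ of nonstandard radius with $0 \in P$. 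Expanding $A' - c'_i = (A - c'_i) \cap (A - c - c'_i)$, the list $\{c'_i\} \cup \{c'_i + c\}$ of length $2N'$ does the job.

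The main obstacle is Case~(a): verifying that when $V$ happens to be a subgroup, the subgroup $V_B \subseteq \mathcal{M}$ is genuinely a ball (not merely of the form $D_m \cap \{v(x) \geq \gamma\}$). This relies on combining \lemref{Characterizing_definable_subgroups} with an elementarity argument using the nonstandard radius of $B$. The rest is bookkeeping, provided one carefully tracks nonemptiness of the cheese intersections via the proper-holes argument.
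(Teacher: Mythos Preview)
Your proof is correct and follows essentially the same approach as the paper: induct on $n$, pass to the quotient $\mathcal{M}/B$, and split into the case where the images of the outer balls form a subgroup (in which case the union of the $P_i$ is already a single pseudo proper cheese, using \lemref{Characterizing_definable_subgroups} to identify the union of outer balls as a ball) versus the case where they do not (in which case a well-chosen shift by an element of $G\cap P_j$ strictly reduces the number of pieces). The only cosmetic differences are your use of the quotient group $\mathcal{M}/B$ in place of the paper's set $S$ of balls of radius $\alpha$, and your elementarity phrasing for the argument that $m=1$; both are equivalent to what the paper does.
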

\begin{proof}
It is of course enough to prove the lemma without the requirement
$0\in P$. We proceed by induction on $n$. For $n=1$ we have nothing
to prove. Suppose that the lemma holds for all $n^{\tg}<n$. For each
$1\leq i\leq n$ let $B_{i}$ be the outer ball of $P_{i}$, and
let $F_{i}$ be a proper cheese with outer ball $B_{i}$ such that
$F_{i}\subseteq P_{i}\subseteq B_{i}$. Let $S$ be the set of all
the balls of radius $\alpha$, and let $S^{\tg}=\{B_{i}\,:\,1\leq i\leq n\}$.
Observe that $(S,+)$ is an infinite group with neutral element $B_{1}$
(since $0\in P_{1}\subseteq B_{1}$), and in particular, $S^{\tg}\subsetneq S$.
Let $C:=\bigcup S^{\tg}=\bigsqcup_{i=1}^{n}B_{i}$. 
\begin{claim*}
If for every $1\leq i\leq n$ there is $a\in B_{i}$ such that $S^{\tg}-a=S^{\tg}$,
then $S^{\tg}$ is a subgroup of $S$. \end{claim*}
\begin{proof}[Proof of claim]
If $B,B^{\tg}\in S$ then $rad(B)=rad(B^{\tg})$, hence $(B-a)\cap B^{\tg}\neq\emptyset\Rightarrow B-a=B^{\tg}$.
Also, for all $B^{\tg\tg}\in S$ and $a,a^{\tg}\in B^{\tg\tg}$, $a-a^{\tg}\in B_{1}$
and therefore $B-a^{\tg}=(B-a)+(a-a^{\tg})=B-a$. From this and the
hypothesis of the claim it follows that for each $1\leq i\leq n$,
$S^{\tg}-B_{i}:= \set{B-B_i : B\in S'} = S^{\tg}$, which implies that $S^{\tg}$ is a subgroup of $S$. 
\end{proof}
There are two cases:\\

\emph{Case 1}: $S^{\tg}$ is a subgroup of $S$. Then $(C,+)$ is
a subgroup of $(M,+)$, and $S^{\tg}$ is the quotient group $C/B_{1}$.
As $(C,+)$ is definable, by \lemref{Characterizing_definable_subgroups}
it must be of the form $C=B(0,\beta)$ (as $B_{1}\not\subseteq mM$
for every $m>1$ with $\mbox{gcd}(m,p)=1$). In fact, since $|S^{\tg}|=n$,
it must be that $\beta=\alpha-k$, where $k$ satisfies $n=p^{k}$.
In particular, $\beta$ is nonstandard. For each $i$, let $H_{i}$
be (any choice for) the set of holes of $F_{i}$, and let $H=\bigcup_{i}H_{i}$.
Then we can rewrite $\bigsqcup_{i=1}^{n}F_{i}$ as $F=B(0,\beta)\backslash\bigcup H$,
which is a single proper cheese, with outer ball $B(0,\beta)$.
Let $P=\bigsqcup_{i=1}^{n}P_{i}$. Then $F\subseteq P\subseteq B(0,\beta)$,
so $P$ is a pseudo proper cheese, and we are done.

~

\emph{Case 2}: $S^{\tg}$ is not a subgroup of $S$. Then by the claim,
there is some $1\leq i_{0}\leq n$ such that for all $a\in B_{i_{0}}$,
$S^{\tg}-a\neq S^{\tg}$ (in fact $1<i_{0}$). Let $a\in G\cap P_{i_{0}}\subseteq B_{i_{0}}$
(which exists because $G$ is dense), and let $A^{\tg}=A\cap(A-a)$.
Then $0\in A^{\tg}\neq\emptyset$. 

Write $A^{\tg}=G\cap(\bigsqcup_{i=1}^{n}\bigsqcup_{j=1}^{n}P_{i}\cap(P_{j}-a))$.
Then 
$$G\cap\bigsqcup_{i=1}^{n}\bigsqcup_{j=1}^{n}F_{i}\cap(F_{j}-a)\subseteq A^{\tg}\subseteq G\cap\bigsqcup_{i=1}^{n}\bigsqcup_{j=1}^{n}B_{i}\cap(B_{j}-a).$$
For all $1\leq i,j\leq n$, $rad(B_{i})=rad(B_{j})=\alpha$ and therefore,
as in \lemref{From_union_of_cheeses_of_different_radiuses_to_a_single_radius},
$B_{i}\cap(B_{j}-a)\neq\emptyset\iff B_{i}=B_{j}-a\iff F_{i}\cap(F_{j}-a)\neq\emptyset$,
and in this case, $F_{i}\cap(F_{j}-a)$ is a proper cheese with
outer ball $B_{i}$. We also have that $F_{i}\cap(F_{j}-a)\subseteq P_{i}\cap(P_{j}-a)\subseteq B_{i}\cap(B_{j}-a)$,
so $P_{i}\cap(P_{j}-a)\neq\emptyset\iff B_{i}\cap(B_{j}-a)\neq\emptyset$,
and in this case, $P_{i}\cap(P_{j}-a)$ is a pseudo proper cheese
with outer ball $B_{i}$. Therefore, $G\cap(\bigsqcup_{i=1}^{n}\bigsqcup_{j=1}^{n}B_{i}\cap(B_{j}-a))=G\cap(\bigsqcup_{i=1}^{n^{\tg}}B_{i}^{\tg})$,
$G\cap(\bigsqcup_{i=1}^{n}\bigsqcup_{j=1}^{n}F_{i}\cap(F_{j}-a))=G\cap(\bigsqcup_{i=1}^{n^{\tg}}F_{i}^{\tg})$,
and $A^{\tg}=G\cap(\bigsqcup_{i=1}^{n^{\tg}}P_{i}^{\tg})$, where
for each $i$, $B_{i}^{\tg}\in S^{\tg}$, $F_{i}^{\tg}$ is a proper
swiss cheese with outer ball $B_{i}^{\tg}$, and $P_{i}^{\tg}$ is
a pseudo proper cheese such that $F_{i}^{\tg}\subseteq P_{i}^{\tg}\subseteq B_{i}^{\tg}$.

Moreover, for every $i$ there is at most one $j$ such that $B_{i}\cap(B_{j}-a)\neq\emptyset$,
therefore $n^{\tg}\leq n$. But by the choice of $a$, $S^{\tg}-a\neq S^{\tg}$,
so there is an $1\leq i\leq n$ such that $B_{i}\neq B_{j}-a$ for
all $1\leq j\leq n$. Therefore $n^{\tg}<n$, and by the induction
hypothesis we are done.
\end{proof}
~ 

~

\subsection{Proof of the theorem}

To prove \thmref{our_result_unstable_p_adic_elementary_extension_version}
we first prove a lemma that enables us to reduce the problem to single
variable formulas. Recall the following:
\begin{fact}[{\cite[Theorem 2.13]{Shelah1990}}]
\label{fact:shelah_lemma}A theory $T$ is stable if and only if
all formulas $\phi(x,y)$ over $\emptyset$ with $|x|=1$ are stable.
\end{fact}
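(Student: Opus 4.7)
The plan is to prove the two directions separately, the nontrivial one being that stability of all one-variable formulas (over $\emptyset$) propagates to stability of $T$. The trivial direction is immediate from the definitions: if $T$ is stable then every formula over $\emptyset$ is stable, in particular those with $|x|=1$.

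For the converse, I would argue by counting types. The key reduction is the classical fact that $T$ is $\lambda$-stable if and only if $|S_1(A)| \leq \lambda$ for every parameter set $A$ with $|A|\leq\lambda$; the passage from $S_1$ to $S_n$ is done by induction on $n$ using the projection $S_{n+1}(A)\to S_1(A)$, whose fiber over a $1$-type $p$ (realized by $a$) is controlled by $S_n(Aa)$. So it is enough to find some infinite $\lambda \geq |T|$ with $\lambda^{|T|}=\lambda$ such that $|S_1(A)|\leq \lambda$ whenever $|A|\leq \lambda$.

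Now fix such an $A$. Every complete $1$-type over $A$ is determined by its restrictions to the individual $\phi$-types $p\!\restriction\!\phi \in S_\phi(A)$ as $\phi(x,y)$ ranges over $L$-formulas with $|x|=1$ (and parameters from $A$). By hypothesis each such $\phi$ is stable, so by the Shelah rank / local stability bound there is a cardinal $\mu_\phi(|A|)$ controlling $|S_\phi(A)|$; a standard computation gives $|S_\phi(A)|\leq (|A|+|T|)^{|T|}= \lambda$ for each stable $\phi$ over $\emptyset$. Taking the product over the at most $|T|$ many such formulas, one obtains
\[
|S_1(A)| \;\leq\; \prod_{\phi(x,y)\,\in\, L,\ |x|=1} |S_\phi(A)| \;\leq\; \lambda^{|T|} \;=\; \lambda,
\]
which gives the desired bound.

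The main obstacle is the local stability input $|S_\phi(A)|\leq (|A|+|T|)^{|T|}$ for a stable formula $\phi$; this is the heart of the matter and is what Shelah establishes via the finiteness of the $R(\cdot,\phi,k)$ ranks and the equivalence between not having the order property, not having the binary tree property for $\phi$, and the bound on $|S_\phi(A)|$. Once that input is granted, the propagation from $1$-types to $n$-types and from formulas with $|x|=1$ to arbitrary $|x|$ is a clean cardinality computation as sketched above.
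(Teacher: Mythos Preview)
The paper does not give a proof of this statement at all: it is recorded as a \emph{Fact} with a citation to \cite[Theorem 2.13]{Shelah1990} and used as a black box in the next lemma. So there is no paper-side argument to compare against.

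Your sketch is the standard type-counting proof and is essentially correct. Two minor remarks. First, your bound $|S_\phi(A)|\leq (|A|+|T|)^{|T|}$ is generous; for a stable formula one actually gets $|S_\phi(A)|\leq |A|+2^{\aleph_0}$ via definability of $\phi$-types, but your weaker bound already suffices once you pick $\lambda=\lambda^{|T|}\geq 2^{|T|}$. Second, in the induction step from $S_n$ to $S_{n+1}$ you should note that the fiber is bounded by $|S_n(Aa)|$ for \emph{each} realization $a$ of the base $1$-type, and that $|Aa|\leq\lambda$ so the inductive bound applies; you have this implicitly but it is the step where working in a monster (so that realizations exist) matters. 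With those caveats, your outline matches the classical argument in Shelah's book.
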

Using this, we can prove:
\begin{lem}
\label{lem:reduction_to_single_variable}Let $L$ be any language
and let $T$ be an unstable $L$-theory with monster model $\mathcal{M}$.
Let $L^{-}\subseteq L$ be such that $T|_{L^{-}}$ is stable. Then
there exists an $L$-formula $\phi(x,y)$ over $\emptyset$ with $|x|=1$
and $b\in\mathcal{M}$ such that $\phi(x,b)$ is not $L^{-}$-definable
with parameters in $\mathcal{M}$.\end{lem}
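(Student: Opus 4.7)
The plan is to prove the contrapositive. Assume that every $L$-formula $\phi(x,y)$ over $\emptyset$ with $|x|=1$ has the property that every instance $\phi(x,b)$ (for $b \in \mathcal{M}$) is $L^{-}$-definable with parameters from $\mathcal{M}$. I will deduce that $T$ is stable. By \factref{shelah_lemma}, it suffices to show that every $L$-formula $\phi(x,y)$ over $\emptyset$ with $|x|=1$ is stable. So fix such a $\phi$, and suppose toward contradiction that $\phi$ has the order property.

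Then there are tuples $(a_i, b_i)_{i<\omega}$ in $\mathcal{M}$ witnessing the order property: $\models \phi(a_i, b_j)$ iff $i < j$. By standard Ramsey/compactness arguments one can further assume that $(a_i b_i)_{i<\omega}$ is $L$-indiscernible over $\emptyset$ without affecting the order property. In particular, all $b_i$ realize the same complete $L$-type $p(y)$ over $\emptyset$. For each $b \models p$, by the assumption there exist an $L^{-}$-formula $\psi_b(x,z)$ and a parameter $c_b \in \mathcal{M}$ such that $\phi(\mathcal{M}, b) = \psi_b(\mathcal{M}, c_b)$.

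The next step is to uniformize the $L^{-}$-formula. For each $L^{-}$-formula $\psi(x,z)$, let $\chi_\psi(y)$ denote the $L$-formula $\exists z\, \forall x\,(\phi(x,y) \leftrightarrow \psi(x,z))$. By our assumption, for every $b \models p$ some $\chi_\psi(b)$ holds, so the partial type $p(y) \cup \set{\neg \chi_\psi(y) : \psi \text{ is an } L^{-}\text{-formula}}$ is inconsistent. By compactness, there are finitely many $L^{-}$-formulas $\psi_1,\dots,\psi_N$ such that $p(y) \vdash \bigvee_{k=1}^{N} \chi_{\psi_k}(y)$. By the pigeonhole principle, one of them, call it $\psi$, satisfies $\chi_\psi(b_i)$ for infinitely many $i$; passing to that subsequence (which still witnesses the order property for $\phi$) we obtain parameters $c_i \in \mathcal{M}$ with $\phi(\mathcal{M}, b_i) = \psi(\mathcal{M}, c_i)$ for all $i < \omega$.

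Now $\models \psi(a_i, c_j)$ iff $\models \phi(a_i, b_j)$ iff $i < j$, so the $L^{-}$-formula $\psi(x,z)$ has the order property in $\mathcal{M}$, contradicting the stability of $T|_{L^{-}}$. Hence $\phi$ must be stable, and therefore $T$ is stable, contrary to hypothesis. The main obstacle is the uniformization step: the $L^{-}$-formulas defining the instances $\phi(x, b_i)$ may a priori depend on $i$, and one needs the compactness argument above (together with completeness of the type $p$) to collapse them to a single formula $\psi$ that then inherits the order property.
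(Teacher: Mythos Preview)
Your proof is correct and follows essentially the same strategy as the paper: use \factref{shelah_lemma} to reduce to a single unstable $\phi(x,y)$ with $|x|=1$, take an indiscernible witness to the order property so that all $b_i$ have the same complete $L$-type over $\emptyset$, uniformize the $L^{-}$-defining formula across the $b_i$'s, and conclude that this $L^{-}$-formula inherits the order property.

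The one place where you take a longer route than necessary is the uniformization step. You invoke compactness on the partial type $p(y)\cup\{\neg\chi_\psi(y)\}$ and then pigeonhole to a single $\psi$. But $p=\mathrm{tp}(b_0/\emptyset)$ is a \emph{complete} type and each $\chi_\psi(y)$ is an $L$-formula over $\emptyset$, so $\chi_\psi(y)\in p$ or $\neg\chi_\psi(y)\in p$; once some $\psi$ works for $b_0$, it automatically works for every $b_i$. The paper phrases this via automorphisms: pick $\psi(x,c_0)$ defining $\phi(x,b_0)$, then for each $k$ choose $\sigma_k\in\mathrm{Aut}(\mathcal{M})$ with $\sigma_k(b_0)=b_k$ and set $c_k=\sigma_k(c_0)$, so the same $\psi$ defines $\phi(x,b_k)$. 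This bypasses compactness and pigeonhole entirely.
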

\begin{proof}
By \ref{fact:shelah_lemma} there is an unstable $L$-formula $\phi(x,y)$
over $\emptyset$ with $|x|=1$. Let $(a_{i})_{i\in\Z}$, $(b_{i})_{i\in\Z}$
be two indiscernible sequences in $\mathcal{M}$ witnessing the instability
of $\phi(x,y)$, i.e., $\phi(a_{i},b_{j})$ if and only if $i<j$. Assume towards
contradiction that $\phi(x,b_{0})$ is definable by an $L^{-}$-formula
$\psi(x,c_{0})$ with parameters $c_{0}$ in $\mathcal{M}$. For each
$k\in\Z\backslash\{0\}$, as $tp(b_{k}/\emptyset)=tp(b_{0}/\emptyset)$
there is an automorphism of $L$-structures $\sigma_{k}\in Aut(\mathcal{M}/\emptyset)$
such that $\sigma_{k}(b_{0})=b_{k}$. Let $c_{k}=\sigma_{k}(c_{0})$.
Then $\phi(x,b_{k})$ is equivalent to $\psi(x,c_{k})$, and hence
$\psi(a_{i},c_{j})$ if and only if $i<j$, a contradiction to the stability of
$T|_{L^{-}}$.
\end{proof}
~

Lemma \ref{lem:reduction_to_single_variable} allows us to
give a simple proof for the unstable case of \corref{conant_result_order_elementary_extension_version}:
\begin{thm}[Conant, Unstable case of \corref{conant_result_order_elementary_extension_version}]
\label{thm:conant_result_order_unstable_elementary_extension_version}Let
$(N,+,0,1,<)$ be an elementary extension of $(\Z,+,0,1,<)$. Then
$(N,+,0,1,<)$ is \textbf{$\emptyset$}-minimal among the \uline{unstable}
\textbf{$\emptyset$}-proper \textbf{$\emptyset$}-expansions of $(N,+,0,1)$.\end{thm}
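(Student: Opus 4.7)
The plan is to show that any $\cN$ in the target class --- unstable, $\emptyset$-proper $\emptyset$-expansion of $(N,+,0,1)$, and $\emptyset$-reduct of $(N,+,0,1,<)$ --- already $\emptyset$-defines $<$, i.e., is $\emptyset$-interdefinable with $(N,+,0,1,<)$. By \obsref{reducts_preserved_by_elementary_substructures}, this is equivalent to the corresponding statement in any sufficiently saturated elementary extension $\cN^{*}\succ\cN$ which is simultaneously an $\emptyset$-reduct of an elementary extension $(N^{*},+,0,1,<)\succ(N,+,0,1,<)$; I pass to such a $\cN^{*}$.

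First I apply \lemref{reduction_to_single_variable} with $L$ the language of $\cN$ and $L^{-}=\{+,0,1\}\subseteq L$. Since $Th(N,+,0,1)$ is superstable of $U$-rank one while $Th(\cN)$ is unstable by hypothesis, the lemma produces an $L$-formula $\phi(x,y)$ over $\emptyset$ with $|x|=1$ and a tuple $b$ in $\cN^{*}$ such that $\phi(x,b)$ is not $\{+,0,1\}$-definable in $\cN^{*}$ with any parameters.

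Next I analyze $\phi$ using that $\cN^{*}$ is an $\emptyset$-reduct of Presburger arithmetic with order: by quantifier elimination in $(N^{*},+,0,1,<)$, the formula $\phi(x,y)$ is equivalent to a Boolean combination of atomic formulas of the forms $nx+my=k$, $D_{l}(nx+my-k)$, and $nx+my<k$, with standard integer coefficients. If $\phi$ contained no atomic inequality with nonzero coefficient on $x$, then every substitution $y:=b$ would leave a Boolean combination in $x$ of cosets of finite-index subgroups and finite sets (the subformulas depending only on $y$ acquire fixed truth values), hence $\phi(x,b)$ would be $\{+,0,1\}$-definable over $\{b\}$, contradicting the choice of $\phi$. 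So at least one atomic inequality $n_{0}x+m_{0}y<k_{0}$ with $n_{0}\neq 0$ must occur essentially in $\phi$.

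The main work is to extract such an atomic inequality as an $\emptyset$-definable binary relation in $\cN^{*}$. All cosets of each subgroup $n\Z$ are $\emptyset$-definable in $(N,+,0,1)$ and hence in $\cN$; restricting $\phi$ to a single residue class modulo the common period appearing in the congruence atomic subformulas of $\phi$ decouples the congruences from the inequalities, and taking Boolean combinations of translates $\phi(x+\underline{k},y+\underline{l})$ for standard integers $k,l$ then successively removes the non-chosen inequalities. This yields an $\emptyset$-definable $R(x,y)$ in $\cN^{*}$ equivalent to $n_{0}x+m_{0}y<k_{0}$. Choosing $\{+,-,0,1\}$-terms $t_{1}(a,c)$, $t_{2}(a,c)$ so that $n_{0}t_{1}(a,c)+m_{0}t_{2}(a,c)-k_{0}$ is a positive multiple of $a-c$, the formula $R(t_{1}(a,c),t_{2}(a,c))$ is then equivalent to $a<c$ in $\cN^{*}$. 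By \obsref{reducts_preserved_by_elementary_substructures} this also defines $<$ in $\cN$, yielding the desired interdefinability. The main obstacle is precisely the extraction step: isolating a single atomic inequality from a complicated Boolean combination requires a careful induction on the number of cells in the Presburger cell decomposition of $\phi(x,b)$, using the density of $\emptyset$-definable translates to separate cells.
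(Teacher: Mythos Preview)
Your opening is fine: the reduction to a single-variable $\phi(x,b)$ via \lemref{reduction_to_single_variable} and the Presburger analysis are exactly how the paper begins. The gap is in the extraction step, and it is not a matter of missing detail --- the method you describe cannot work.

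You propose to isolate a single atomic inequality $n_{0}x+m_{0}y<k_{0}$ as an $\emptyset$-definable $R(x,y)$ by taking Boolean combinations of translates $\phi(x+\underline{k},y+\underline{l})$ with \emph{standard} $k,l$. But standard shifts only move the constants in each atomic inequality by bounded amounts; they cannot eliminate one inequality while preserving another. Concretely, suppose $\phi(x,y)$ happens to be equivalent in $L_{<}$ to $(0<x)\wedge(x<y)$. Every translate $\phi(x+\underline{k},y+\underline{l})$ is $(-\underline{k}<x)\wedge(x<y+\underline{l}-\underline{k})$, and any finite Boolean combination of such formulas still defines, for each fixed $y$, a set bounded below by some standard integer. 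You will never recover $x<y$ this way. The hint about ``density of $\emptyset$-definable translates'' does not help: the only $\emptyset$-definable elements of $N$ are the standard integers, so the available translates are not dense in any useful sense. Your closing remark about inducting on the cell decomposition of $\phi(x,b)$ conflates two different objects: $\phi(x,b)$ is a one-variable set (a finite union of intervals), whereas the $R(x,y)$ you want is a relation in $1+|y|$ variables.

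The paper's route avoids this entirely. Instead of seeking an $\emptyset$-definable order directly, it works \emph{with parameters}: from $\phi(x,b)$ it builds, via translations by elements of the monster (not just standard integers), an $L$-definable formula $\theta(x,b')$ equivalent to an interval $[0,a']$ with $a'$ nonstandard. The passage back to $\emptyset$ is then done by writing a first-order $L$-formula $\chi(y,z)$ asserting ``$\theta(\cdot,z)$ is the interval $[0,y]$'' (this is expressible because closure under successor and predecessor, together with endpoint conditions, pins down an interval), proving that arbitrarily large such intervals exist, and setting $\delta(x):=\exists y,z\,(\chi(y,z)\wedge\theta(x,z))$. This $\delta$ is $\emptyset$-definable in $L$ and cuts out exactly $x\geq 0$ on $N$. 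The essential idea you are missing is this two-stage maneuver: first obtain the desired shape over parameters, then recover $\emptyset$-definability by quantifying over the parameters using a first-order characterization of that shape.
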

\begin{proof}
Let $\cN$ be any unstable structure with universe $N$, which is
a \textbf{$\emptyset$}-proper \textbf{$\emptyset$}-expansion of
$(N,+,0,1)$ and a \textbf{$\emptyset$}-reduct of $(N,+,0,1,<)$.
We show that $\cN$ is \textbf{$\emptyset$}-interdefinable with $(N,+,0,1,<)$.
It is enough to show that $x\geq0$ is definable over $\emptyset$
in $\cN$. Let $L$ be the language of $\cN$, $L^{-}=\{+,0,1\}$
and $L_{<}=\{+,0,1,<\}$. We may expand all these languages by adding
the symbols $\{-\}\cup\{D_{n}\,:\, n\geq1\}$, as all of them are
already definable over $\emptyset$ in all three languages. As $\cN$
is a \textbf{$\emptyset$}-expansion of $(N,+,0,1)$ and a \textbf{$\emptyset$}-reduct
of $(N,+,0,1,<)$, we may replace $L$ with $L\cup L^{-}$ and $L_{<}$
with $L_{<}\cup L\cup L^{-}$ without adding new $\emptyset$-definable
sets to any structure. So we may assume that $L^{-}\subseteq L\subseteq L_{<}$. 

Let $\mathcal{M}$ be a monster model for $Th(\Z,+,0,1,<)$, so $\mathcal{M}|_{L}$
is a monster for $Th(\cN)$. As $(N,+,0,1)$ is stable but $\cN$
is not, by \lemref{reduction_to_single_variable} there exist an $L$-formula
$\phi(x,y)$ over $\emptyset$ with $|x|=1$ and $b\in\mathcal{M}$
such that $\phi(x,b)$ is not $L^{-}$-definable with parameters in
$\mathcal{M}$. By quantifier elimination in $Th(\Z,+,0,1,<)$ and \lemref{reduction_to_a_single_D_m_with_m_coprime_to_all_p_k}
(\ref{enu:reduction to a single D_m}) (which is a theorem of $Th(\Z,+,0,1)$),
$\phi(x,b)$ is equivalent to a formula of the form 
\[
\bigvee_{i}(D_{m_{i}}(x-k_{i})\wedge x\in[c_{i},c_{i}^{\tg}])
\]
where $c_{i},c_{i}^{\tg}\in M\cup\{-\infty,+\infty\}$ and $[c_{i},c_{i}^{\tg}]$
denotes the closed interval except if one of the bound is infinite, in which case it is open on the infinite side. Let $m=\prod_{i}m_{i}$.
As each formula of the form $D_{m_{i}}(x-k)$ is equivalent to a disjunction
of formulas of the form $D_{m}(x-k^{\tg})$, we can rewrite this as
\[
\bigvee_{i}(D_{m}(x-k_{i})\wedge x\in[c_{i},c_{i}^{\tg}])
\]
(with possibly different $k_{i}$'s and numbering). By grouping together
disjuncts with the same $k_{i}$, we can rewrite this as 
\[
\bigvee_{i}(D_{m}(x-k_{i})\wedge\bigvee_{j}x\in[c_{i,j},c_{i,j}^{\tg}])
\]
where for $i_{1}\neq i_{2}$, $k_{i_{1}}\not\equiv k_{i_{2}}\mbox{ mod }m$.
As this formula is equivalent to $\phi(x,b)$, which is not $L^{-}$-definable
with parameters in $\mathcal{M}$, there must be an $i_{0}$ such
that $D_{m}(x-k_{i_{0}})\wedge\bigvee_{j}x\in[c_{i_{0},j},c_{i_{0},j}^{\tg}]$
is not $L^{-}$-definable with parameters in $\mathcal{M}$. This
latter formula, which we denote by $\phi_{i_{0}}(x,b)$, is equivalent
to $\phi(x,b)\wedge D_{m}(x-k_{i_{0}})$, and so is $L$-definable.
Let $\psi(x,b)$ be the formula $\phi_{i_{0}}(mx+k_{i_{0}},b)$. Then
$\psi(x,b)$ is $L$-definable and equivalent to just $\bigvee_{j}mx+k_{i_{0}}\in[c_{i_{0},j},c_{i_{0},j}^{\tg}]$.
This substitution is reversible as $\phi_{i_{0}}(x,b)$ is equivalent
to $D_{m}(x-k_{i_{0}})\wedge\psi(\frac{x-k_{i_{0}}}{m},b)$, therefore
also $\psi(x,b)$ is not $L^{-}$-definable with parameters in $\mathcal{M}$.
Each formula of the form $mx+k\in[c,c^{\tg}]$ is equivalent to the
formula $x\in[\lceil\frac{c-k}{m}\rceil,\lfloor\frac{c^{\tg}-k}{m}\rfloor]$,
so we can rewrite $\psi(x,b)$ as $\bigvee_{i=1}^{n}x\in[c_{i},c_{i}^{\tg}]$.
By reordering and combining intersecting intervals, we may assume
that the intervals are disjoint and increasing, i.e., for all $i<n$,
$c_{i}^{\tg}< c_{i+1}$. 

~

Now we show how from $\psi(x,b)$ we can get an $L$-definable formula
equivalent to $[0,a]$, for $a$ a positive nonstandard integer in
$\mathcal{M}$. For each $i$, if $[c_{i},c_{i}^{\tg}]$ defines in
$\mathcal{M}$ a finite set then it is $L^{-}$-definable, and so
$\psi(x,b)\wedge\neg x\in[c_{i},c_{i}^{\tg}]$ is also $L$-definable
but not $L^{-}$-definable (since $(\psi(x,b)\wedge\neg x\in[c_{i},c_{i}^{\tg}])\vee x\in[c_{i},c_{i}^{\tg}]$
is again equivalent to $\psi(x,b)$). So we may assume that for all
$i$, $[c_{i},c_{i}^{\tg}]$ is infinite. Note that as $\psi(x,b)$
is not $L^{-}$-definable, it cannot be empty. 

We want $\psi(x,b)$ to have a lower bound, i.e., $-\infty<c_{1}$.
If $c_{1}=-\infty$ but $c_{n}^{\tg}\neq+\infty$, then we can just
replace $\psi(x,b)$ with $\psi(-x,b)$. If both $c_{1}=-\infty$
and $c_{n}^{\tg}=+\infty$, we can replace $\psi(x,b)$ with $\neg\psi(x,b)$
and again remove all finite intervals. In both cases, $\psi(x,b)$
is still $L$-definable but not $L^{-}$-definable, so it is still
a nonempty disjunction of infinite disjoint intervals. 

By replacing $\psi(x,b)$ with $\psi(x+c_{1},b)$ we may assume
that $c_{1}=0$, so the leftmost interval is $[0,c_{1}^{\tg}]$. If $c_{1}^{\tg}\neq+\infty$
let $a^{\tg}=c_{1}^{\tg}$, otherwise let $a^{\tg}\in\mathcal{M}$
be any positive nonstandard integer. Let $\theta(x,b^{\tg})$ denote
the formula $\psi(x,b)\wedge\psi(a^{\tg}-x,b)$. Then $\theta(x,b^{\tg})$
is $L$-definable and equivalent to the infinite interval $[0,a^{\tg}]$. The proof of the following claim is an obvious consequence of quantifier elimination for Presburger arithmetic and is left to the reader.
\begin{claim}
\label{claim:first_claim_end_proof_order}For every $c\geq0$ there
exist $a>c$ and $b$ such that $\theta(x,b)$ is equivalent to the
interval $[0,a]$.\end{claim}
In particular, as $N$ is a small subset of $\cM$, there exists $c\in\cM$
bigger than all elements of $N$. By the claim, there exist $\tilde{a}>c$
and $\tilde{b}$ such that $\theta(x,\tilde{b})$ is equivalent to
the interval $[0,\tilde{a}]$, and so $\theta(N,\tilde{b})=\{s\in N\,:\, s\geq0\}$.

~

Let $\chi(y,z)$ be the formula $\chi_{1}(y,z)\wedge\chi_{2}(y,z)\wedge\chi_{3}(y,z)$
where:
\begin{itemize}
\item $\chi_{1}(y,z)$ is the formula $\theta(0,z)\wedge\theta(y,z)\wedge\neg\theta(-1,z)\wedge\neg\theta(y+1,z)\wedge\neg\theta(2y,z)$. 
\item $\chi_{2}(y,z)$ is the formula $\forall w((w\neq0\wedge\theta(w,z))\rightarrow\theta(w-1,z))$.
\item $\chi_{3}(y,z)$ is the formula $\forall w((w\neq y\wedge\theta(w,z))\rightarrow\theta(w+1,z))$.
\end{itemize}
So $\chi(y,z)$ is $L$-definable over $\emptyset$.
\begin{claim}
\label{claim:second_claim_end_proof_order}For every $a,b\in\cM$,
$\mathcal{M}\vDash\chi(a,b)$ if and only if $a>0$ and $\theta(\cM,b)=[0,a]$.\end{claim}
\begin{proof}
This can be formulated as a first order sentence in $L_{<}$ without
parameters:
\[
\cM\vDash\forall y,z(\chi(y,z)\leftrightarrow(y>0\wedge\forall x(\theta(x,z)\leftrightarrow0\leq x\leq y))),
\]
so it is enough to prove this for $\Z$. Let $a,b\in\Z$. If $a>0$
and $\theta(\Z,b)=[0,a]$, then clearly $\mathcal{\Z}\vDash\chi(a,b)$.
Suppose $\mathcal{\Z}\vDash\chi(a,b)$, and denote $A:=\theta(\Z,b)$.
By $\chi_{1}$, $0,a\in A$ and $-1,a+1,2a\notin A$. Suppose towards
contradiction that $a<0$. Then from $\chi_{2}$ it follows by induction
that $(-\infty,a]\subseteq A$. But then $2a\in A$, a contradiction.
So $a\geq0$. If $a=0$ then again $2a\in A$ is a contradiction.
So $a>0$. From $\chi_{2}$ it follows by induction that $[0,a]\subseteq A$.
Also, from $a+1\notin A$ and $\chi_{2}$ it follows by induction
that $[a+1,\infty)\cap A=\emptyset$, and from $-1\notin A$ and $\chi_{3}$
it follows by induction that $(-\infty,-1]\cap A=\emptyset$. So $A=[0,a]$.
\end{proof}
Now, let $\delta(x)$ be the formula 
\[
\exists y,z(\chi(y,z)\wedge\theta(x,z))\mbox{.}
\]
Then $\delta(x)$ is $L$-definable over $\emptyset$, and we claim
that it defines $x\geq0$ in $\cN$: For $s\in N$, if $\cN\vDash\delta(s)$
then there are $a,b\in N$ such that $\cN\vDash\chi(a,b)\wedge\theta(s,b)$,
so by \claimref{second_claim_end_proof_order}, $s\in[0,a]$ hence
$s\geq0$. On the other hand, suppose $s\geq0$. By the choice of
$\tilde{a},\tilde{b}$, $\cM\vDash\chi(\tilde{a},\tilde{b})\wedge\theta(s,\tilde{b})$,
so $\cM\vDash\delta(s)$, and by elementarity, $\cN\vDash\delta(s)$.
Therefore, $x\geq0$ is definable over $\emptyset$ in $\cN$.
\end{proof}
~
\begin{rem}
\label{rem:any_order_intermediate_with_new_one_dimensional_set_is_unstable}The
part in the proof where we start with an $L$-formula $\phi(x,y)$
over $\emptyset$ with $|x|=1$ and $b\in\mathcal{M}$ such that $\phi(x,b)$
is not $L^{-}$-definable with parameters in $\mathcal{M}$, and show
that there exists a formula $\theta(x,b^{\tg})$ which is $L$-definable
and equivalent to the infinite interval $[0,a^{\tg}]$, works the
same for any structure $\cN$ which is a proper expansion of $(N,+,0,1)$
and a reduct of $(N,+,0,1,<)$. $\cN$ does not have to be a \textbf{$\emptyset$}-expansion
of $(N,+,0,1)$ or a \textbf{$\emptyset$}-reduct of $(N,+,0,1,<)$,
nor unstable, as long as such $\phi(x,y)$ and $b$ exist (being a
\textbf{$\emptyset$}-reduct is needed in the proof for $\phi(x,y)$
to also be $\emptyset$-definable in $L_{<}$). So in any structure
$\cN$ which is a proper expansion of $(N,+,0,1)$ and a reduct of
$(N,+,0,1,<)$, and which has a definable one-dimensional set which
is not definable in $(N,+,0,1)$, there exists a definable infinite
interval, and hence it is unstable.
\end{rem}
~

Combined with \factref{conant_pillay_stable_expansions_finite_dp_rank},
we recover \corref{conant_result_order_elementary_extension_version}
and \thmref{conant_result_order}:
\begin{proof}[Proof of \corref{conant_result_order_elementary_extension_version}]
Suppose for a contradiction that there exists a structure $\cN$
with universe $N$, which is a \textbf{$\emptyset$}-proper \textbf{$\emptyset$}-expansion
of $(N,+,0,1)$ and a \textbf{$\emptyset$}-proper \textbf{$\emptyset$}-reduct
of $(N,+,0,1,<)$. So $\cN$ is dp-minimal, and by \thmref{conant_result_order_unstable_elementary_extension_version},
it must also be stable. By \obsref{reducts_preserved_by_elementary_substructures},
relativization to $\Z$ gives us a structure $\cZ\prec\cN$ which
is a \textbf{$\emptyset$}-proper \textbf{$\emptyset$}-expansion
of $(\Z,+,0,1)$ and a \textbf{$\emptyset$}-proper \textbf{$\emptyset$}-reduct
of $(\Z,+,0,1,<)$. As every element of $(\Z,+,0,1)$ is $\emptyset$-definable,
a reduct of $(\Z,+,0,1)$ is in fact a \textbf{$\emptyset$}-reduct,
and so a \textbf{$\emptyset$}-proper \textbf{$\emptyset$}-expansion
of $(\Z,+,0,1)$ is in fact a proper \textbf{$\emptyset$}-expansion
of $(\Z,+,0,1)$, which is of course a proper expansion. So $\cZ$
is a stable dp-minimal proper expansion of $(\Z,+,0,1)$, a contradiction
to \factref{conant_pillay_stable_expansions_finite_dp_rank}.
\end{proof}
~
\begin{proof}[Proof of Theorem~\ref{conant_result_order}]
Suppose for a contradiction that there exists a structure $\cZ$
with universe $\Z$, which is a proper expansion of $(\Z,+,0,1)$
and a proper reduct of $(\Z,+,0,1,<)$. In $\cZ$, $+$, $0$, and
$1$ are definable, but not necessarily $\emptyset$-definable. We
expand $\cZ$ to a structure $\cZ^{\tg}$ by adding $+$, $0$, and
$1$ to the language. So $\cZ^{\tg}$ is a proper \textbf{$\emptyset$}-expansion
of $(\Z,+,0,1)$, and still a proper reduct of $(\Z,+,0,1,<)$. As
every element of $(\Z,+,0,1,<)$ is $\emptyset$-definable, a reduct
of $(\Z,+,0,1,<)$ is in fact a \textbf{$\emptyset$}-reduct. So $\cZ^{\tg}$
is a proper \textbf{$\emptyset$}-expansion of $(\Z,+,0,1)$, and
a proper \textbf{$\emptyset$}-reduct of $(\Z,+,0,1,<)$. As a proper
\textbf{$\emptyset$}-expansion/reduct is obviously a \textbf{$\emptyset$}-proper
\textbf{$\emptyset$}-expansion/reduct, this contradicts \corref{conant_result_order_elementary_extension_version}. 
\end{proof}
~

The proof of \thmref{our_result_unstable_p_adic_elementary_extension_version}
is similar, but more involved and relies on \subref{Cheese-Feng-Shui}.
\begin{proof}[Proof of \thmref{our_result_unstable_p_adic_elementary_extension_version}]
Let $\cN$ be any unstable structure with universe $N$, which is
a \textbf{$\emptyset$}-proper \textbf{$\emptyset$}-expansion of
$(N,+,0,1)$ and a \textbf{$\emptyset$}-reduct of $(N,+,0,1,|_{p})$.
We show that $\cN$ is \textbf{$\emptyset$}-interdefinable with $(N,+,0,1,|_{p})$.
It is enough to show that $x|_{p}y$ is definable over $\emptyset$
in $\cN$. Let $L$ be the language of $\cN$ and $L^{-}=\{+,0,1\}$.
As in the proof of \thmref{conant_result_order_unstable_elementary_extension_version},
we may assume that all languages contain $\{-\}\cup\{D_{n}\,:\, n\geq1\}$,
and (by being a \textbf{$\emptyset$}-reduct and \textbf{$\emptyset$}-expansion)
that $L^{-}\subseteq L\subseteq L_{p}^{E}$.

Let $\mathcal{M}$ be a monster model for $T_{p}$, so $\mathcal{M}|_{L}$
is a monster for $Th(\cN)$. As $(N,+,0,1)$ is stable but $\cN$
is not, by \lemref{reduction_to_single_variable} there exist an $L$-formula
$\phi(x,y)$ over $\emptyset$ with $|x|=1$ and $b\in\mathcal{M}$
such that $\phi(x,b)$ is not $L^{-}$-definable with parameters in
$\mathcal{M}$. By \thmref{QE} (quantifier elimination) and \remref{QE_one_variable_simpler_form},
$\phi(x,b)$ is equivalent to a formula of the form
\[
\bigvee_{i}\left(D_{m}(x-r_{i})\wedge kx\in F_{i} \wedge \bigwedge_{j}k^{\tg}x\neq a_{i,j} \right)\vee\bigvee_{i^{\tg}}x=c_{i^{\tg}}
\]
where $m,k,k^{\tg},r_{i}\in\Z$, $\mbox{gcd}(m,p)=\mbox{gcd}(k,p)=1,$
$k^{\tg}=p^{l}  k$ for some $l\geq0$, $a_{i,j},c_{i^{\tg}}\in\mathcal{M}$
and each $F_{i}$ is a swiss cheese in $\mathcal{M}$. 

The first step of the proof is to show the existence of an $L$-definable
formula which is equivalent to a formula of the form $D_{m}(x)\wedge x\in B(0,\gamma)$,
i.e. $D_{m}(x)\wedge v(x)\geq\gamma$, for some nonstandard $\gamma\in \Gamma$
and integer $m$ such that $\mbox{gcd}(m,p)=1$. Let $\phi^{\tg}(x,b)$ be the formula 
\[
\bigvee_{i}(D_{m}(x-r_{i})\wedge kx\in F_{i})\mbox{.}
\]
The symmetric difference $\phi(x,b)\triangle\phi^{\tg}(x,b)$ is finite,
hence $L^{-}$-definable, and therefore $\phi^{\tg}(x,b)$ is also
$L$-definable but not $L^{-}$-definable. So we may replace $\phi(x,b)$
by $\phi^{\tg}(x,b)$. For each $i$, the formula $D_{m}(x-r_{i})$
is equivalent to $D_{k  m}(kx-k  r_{i})$, so $\phi(x,b)$
is equivalent to the formula 
\[
\bigvee_{i}(D_{k  m}(kx-k  r_{i})\wedge kx\in F_{i})\mbox{.}
\]
Let $\phi^{\tg}(x,b)$ be the formula $D_{k}(x)\wedge\phi(\frac{x}{k},b)$.
Then $\phi^{\tg}(x,b)$ is $L$-definable and equivalent to the formula
\[
\bigvee_{i}(D_{m^{\tg}}(x-r_{i}^{\tg})\wedge x\in F_{i})
\]
where $m^{\tg}=k  m$ and $r_{i}^{\tg}=k  r_{i}$. This substitution
is reversible as $\phi(x,b)$ is equivalent to $\phi^{\tg}(kx,b)$,
therefore also $\phi^{\tg}(x,b)$ is not $L^{-}$-definable. So again
we may replace $\phi(x,b)$ by $\phi^{\tg}(x,b)$.

We want each $F_{i}$ to have a nonstandard radiuses. For each $i$,
choose a representation for $F_{i}$ as a swiss cheese $F_{i}=B_{i,0}\backslash\bigcup_{j=1}^{n_{i}}B_{i,j}$,
where $B_{i,j}=B(a_{i,j},\gamma_{i,j})$. Let $J_{i}=\{1\leq j\leq n_{i}\,:\,\gamma_{i,j}\notin\N\}$,
i.e., the set of indices of the infinite holes, and let $$B_{i,0}^{\tg}=\begin{cases}
B(0,0) & \gamma_{i,0}\in\N\\
B_{i,0} & \gamma_{i,0}\notin\N
\end{cases}\text{ and } B_{i,0}^{\tg\tg}=\begin{cases}
B_{i,0} & \gamma_{i,0}\in\N\\
B(0,0) & \gamma_{i,0}\notin\N
\end{cases}$$(note that $B(0,0)=M$). Let $F_{i}^{\tg}=B_{i,0}^{\tg}\backslash\bigcup_{j\in J_{i}}B_{i,j}$,
and let $F_{i}^{\tg\tg}=B_{i,0}^{\tg\tg}\backslash\bigcup_{j\notin J_{i}}B_{i,j}$.
Then $F_{i}=F_{i}^{\tg}\cap F_{i}^{\tg\tg}$, and so $\phi(x,b)$
is equivalent to 
\[
\bigvee_{i}(D_{m^{\tg}}(x-r_{i}^{\tg})\wedge x\in F_{i}^{\tg\tg} \wedge x\in F_{i}^{\tg}) \mbox{.}
\]
 Each hole of $F_{i}^{\tg}$ has nonstandard radius, and its outer
ball either has an nonstandard radius or has radius $0$. On the other
hand, both the outer ball and all the holes of $F_{i}^{\tg\tg}$
have finite radiuses. In general, if $B(a,\gamma)$ has finite radius,
then the formula $x\in B(a,\gamma)$ is equivalent to $D_{p^{\gamma}}(x-a)$.
So $x\in F_{i}^{\tg\tg}$ is equivalent to a boolean combination of
such formulas, and therefore, by \lemref{reduction_to_a_single_D_m_with_m_coprime_to_all_p_k}
(\ref{enu:reduction to a single D_m}) (choosing the same $m^{\tg\tg}$
for all the $i$'s and rearranging the disjunction), $\phi(x,b)$
is equivalent to a formula of the form 
\[
\bigvee_{i}(D_{m^{\tg\tg}}(x-r_{i}^{\tg})\wedge x\in F_{i}^{\tg})
\]
where each hole of $F_{i}^{\tg}$ has a nonstandard radius, and
its outer ball either has an nonstandard radius or has radius $0$.
Note that now it may be that $p|m^{\tg\tg}$. By grouping together
disjuncts with the same $r_{i}^{\tg}$, we can rewrite this as 
\[
\bigvee_{i}(D_{m^{\tg\tg}}(x-r_{i}^{\tg})\wedge\bigvee_{j}x\in F_{i,j}^{\tg})
\]
where for $i_{1}\neq i_{2}$, $r_{i_{1}}^{\tg}\not\equiv r_{i_{2}}^{\tg}\mbox{ mod }m^{\tg\tg}$.
As this formula is equivalent to $\phi(x,b)$, which is not $L^{-}$-definable
with parameters in $\mathcal{M}$, there must be an $i_{0}$ such
that $D_{m^{\tg\tg}}(x-r_{i_{0}}^{\tg})\wedge\bigvee_{j}x\in F_{i_{0},j}^{\tg}$
is not $L^{-}$-definable with parameters in $\mathcal{M}$. This
latter formula, which we denote by $\phi_{i_{0}}(x,b)$, is equivalent
to $\phi(x,b)\wedge D_{m^{\tg\tg}}(x-r_{i_{0}}^{\tg})$, and so is
$L$-definable. So we may replace $\phi(x,b)$ by $\phi_{i_{0}}(x,b)$.
For simplicity of notation we rewrite this as
\[
D_{m}(x-r)\wedge\bigvee_{i}x\in F_{i}\mbox{.}
\]
By \lemref{union_of_cheeses_with_nonempty_intersection_is_also_a_cheese}
we may assume that $\{F_{i}\}_{i}$ are pairwise disjoint, and still
have that for each $i$, all the holes of $F_{i}$ have nonstandard radiuses
and its outer ball either has a nonstandard radius or has radius $0$.
By \remref{A-finite-number-of-proper-holes-cannot-cover-a-ball} two
proper cheeses having the same outer ball must intersect. Applying
this to all the $F_{i}$'s having radius $0$ (which are all proper,
as all the holes are of nonstandard radius), we see that there can be
at most one $i$ such that $F_{i}$ has radius $0$. 

We want all proper cheeses to have nonstandard radius. If there is $i_{0}$
such that the proper cheese $F_{i_{0}}$ has radius $0$, let $\phi^{\tg}(x,b)$
be the formula $D_{m}(x-r)\wedge\neg\phi(x,b)$. Then $\phi^{\tg}(x,b)$
is $L$-definable and, as $\phi(x,b)$ is equivalent to $D_{m}(x-r)\wedge\neg\phi^{\tg}(x,b)$,
it is also not $L^{-}$-definable. The formula $\phi^{\tg}(x,b)$
is equivalent to 
\[
D_{m}(x-r)\wedge\bigwedge_{i}x\in F_{i}^{c}\mbox{.}
\]

We may write $F_{i_{0}}=B(0,0)\backslash\bigcup_{j=1}^{n}B_{j}$,
where for each $j$, $rad(B_{j})$ is nonstandard. So $F_{i_{0}}^{c}=\bigcup_{j=1}^{n}B_{j}$,
and $\phi^{\tg}(x,b)$ is equivalent to 

\[
D_{m}(x-r)\wedge\bigvee_{j=1}^{n}(x\in B_{j}\wedge\bigwedge_{i\neq i_{0}}x\in F_{i}^{c})\mbox{.}
\]

For each $i\neq i_{0}$, $F_{i}^{c}$ is a finite union of swiss cheeses
(specifically, a union of a single swiss cheese of radius $0$ and
a finite number of balls). Therefore, by \remref{splitting_swiss_cheeses} \textit{(4)},
for each $j$, $B_{j}\cap\bigcap_{i\neq i_{0}}F_{i}^{c}$ is a finite
union of swiss cheeses, each of radius at least $rad(B_{j})$, so
nonstandard. So $\phi^{\tg}(x,b)$ is equivalent to a formula of the
form 

\[
D_{m}(x-r)\wedge\bigvee_{i}x\in F_{i}^{\tg}
\]
where each $F_{i}^{\tg}$ is a swiss cheese of nonstandard radius. Again by
\lemref{union_of_cheeses_with_nonempty_intersection_is_also_a_cheese},
we may assume in addition that $\{F_{i}^{\tg}\}_{i}$ are pairwise
disjoint. As $\phi^{\tg}(x,b)$ is not $L^{-}$-definable, the disjunction
cannot be empty. So we may replace $\phi(x,b)$ by $\phi^{\tg}(x,b)$
and rename $F_{i}^{\tg}$ as $F_{i}$. 

We may assume that for each $i$, $D_{m}(x-r)\wedge x\in F_{i}$ defines
a nonempty set, as otherwise we may just drop the $i$'th disjunct.
Write $m=p^{k}  m^{\tg}$ with $\mbox{gcd}(m^{\tg},p)=1$. Then
$D_{m}(x-r)$ is equivalent to $D_{m^{\tg}}(x-r_{1})\wedge(v_{p}(x-r_{2})\geq k)$,
where $r_{1}=r\mbox{ mod }m'$ and $r_{2}=r\mbox{ mod }p^{k}$. So
$\phi(x,b)$ is equivalent to 
\[
D_{m^{\tg}}(x-r_{1})\wedge\bigvee_{i}(v_{p}(x-r_{2}\geq k)\wedge x\in F_{i})\mbox{.}
\]
The formula $v_{p}(x-r_{2})\geq k$ defines the ball $B(r_{2},k)$,
of finite radius $k$, and for each $i$, the outer ball of $F_{i}$
has a nonstandard radius. As $D_{m}(x-r)\wedge x\in F_{i}$ defines
a nonempty set, so too does $v_{p}(x-r_{2})\geq k\wedge x\in F_{i}$,
and hence the outer ball of $F_{i}$ is contained in $B(r_{2},k)$.
Therefore $v_{p}(x-r_{2})\geq k\wedge x\in F_{i}$ is equivalent
to just $x\in F_{i}$, and so $\phi(x,b)$ is equivalent to 
\[
D_{m^{\tg}}(x-r_{1})\wedge\bigvee_{i}x\in F_{i}\mbox{.}
\]
By \remref{splitting_swiss_cheeses} (\ref{enu:splitting_to_proper})
we may assume that each $F_{i}$ is a proper cheese. We replace $\phi(x,b)$
by $\phi(x+r_{1},b)$, and rename $m^{\tg}$ as $m$ and each $F_{i}-r_{1}$
as $F_{i}$. Altogether, $\phi(x,b)$ is equivalent to a formula of
the form 
\[
D_{m}(x)\wedge\bigvee_{i}x\in F_{i}
\]
where $\mbox{gcd}(m,p)=1$, and $\{F_{i}\}_{i}$ are disjoint proper cheeses having nonstandard radiuses. As $\phi(x,b)$ is not $L^{-}$-definable,
the disjunction cannot be empty. 

By \remref{D_m_with_m_coprime_to_p_are_dense}, $D_{m}(x)$ defines
a dense subgroup of $\mathcal{M}$. By successively applying Lemmas
\ref{lem:From_union_of_cheeses_of_different_radiuses_to_a_single_radius},
\ref{lem:From_union_of_cheeses_with_the_same_radius_to_single_cheese}
and \ref{lem:reduction_single_pseudo_cheese_to_ball}, we get an $L$-definable
formula of the form 
\begin{equation}
D_{m}(x)\wedge x\in B(0,\gamma)\tag{\ensuremath{\star}}\label{eq:phi_form_final}
\end{equation}
with $\gamma$ nonstandard and $\mbox{gcd}(m,p)=1$. We will now assume that $\phi(x,b)$ is of this form.\\
~

To finish, we need the following: 
\begin{claim}
\label{claim:finishing_claim_valuation}Let $\psi(x,z)$ be any $L_{p}$-formula
with $|x|=1$.
\begin{enumerate}
\item \label{enu:finishing_claim_valuation_part_1}Suppose there exists
$a\in\mathcal{M}$ with $v(a)$ nonstandard, for which there exists
$b$ such that $\psi(x,b)$ is equivalent to $v(x)\geq v(a)$. Then
for any $c$ such that $v(c)$ is nonstandard there is $b^{\tg}\in\mathcal{M}$
such that $tp(b^{\tg}/\emptyset)=tp(b/\emptyset)$ (in $L_{p}$) and
$\psi(x,b^{\tg})$ is equivalent to $v(x)\geq v(c)$. 
\item Let $\theta(z)$ be an $L_{p}$-formula. Then there exists $K\in\N$
such that for any $a\in\mathcal{M}$ with $v(a)\geq K$, if there
exists $b$ such that $\theta(b)$ holds and $\psi(x,b)$ is equivalent
to $v(x)\geq v(a)$, then for any $c$ such that $v(c)\geq K$ there
is $b^{\tg}\in\mathcal{M}$ such that $\theta(b^{\tg})$ and $\psi(x,b^{\tg})$
is equivalent to $v(x)\geq v(c)$. That is, let $\alpha(w)$ be the
formula defined by 
\[
\exists z(\theta(z)\wedge\forall x(\psi(x,z)\leftrightarrow v(x)\geq v(w)))
\]
 and let $\chi(w)$ be the formula defined by 
\[
\alpha(w)\rightarrow\forall w^{\tg}(v(w^{\tg})\geq K\rightarrow\alpha(w^{\tg})).
\]
Then $\chi(w)$ is satisfied by any $a$ such that $v(a)\geq K$.
\end{enumerate}
\end{claim}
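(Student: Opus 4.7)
The plan is to prove \textit{(1)} by a finite-satisfiability argument exploiting that all nonstandard elements of $\Gamma$ realize the same complete type over $\emptyset$, and then to deduce \textit{(2)} from \textit{(1)} by a standard compactness argument in the monster $\cM$. The underlying fact I will use is that the only quantifier-free atomic $L_p^M$-formulas in a single free variable $u$ of sort $\Gamma$ are Boolean combinations of $u = \underline{k}$, $u <\underline{k}$, and $u = \infty$ for $k\in \N$, so by \thmref{QE} any two nonstandard elements of $\Gamma$ share the same complete type over $\emptyset$.

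For \textit{(1)}, I would fix $c$ with $v(c)$ nonstandard and show that the partial type
\[
p(y) := \mathrm{tp}(b/\emptyset) \cup \set{\forall x\,(\psi(x,y) \leftrightarrow v(x) \geq v(c))}
\]
is finitely satisfiable. Given a finite fragment $\phi_{1}(y),\dots,\phi_{n}(y) \in \mathrm{tp}(b/\emptyset)$, the existence of a simultaneous realization with the universal condition is asserted by
\[
\eta(w) := \exists y\Bigl(\bigwedge_{i} \phi_{i}(y) \wedge \forall x\,(\psi(x,y) \leftrightarrow v(x) \geq v(w))\Bigr).
\]
The crucial observation is that $w$ appears in $\eta$ only through $v(w)$, so $\eta(w)$ is $\cM$-equivalent to a formula $\tilde{\eta}(u)$ in a single free $\Gamma$-sort variable $u$. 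Since $\eta(a)$ holds (witnessed by $b$) and $v(a)$ and $v(c)$ have the same type over $\emptyset$, we conclude $\eta(c)$. Saturation of $\cM$ now yields a realization $b^{\tg}$ of $p(y)$, and completeness of $\mathrm{tp}(b/\emptyset)$ forces $\mathrm{tp}(b^{\tg}/\emptyset) = \mathrm{tp}(b/\emptyset)$, while the universal conjunct delivers $\psi(x, b^{\tg}) \leftrightarrow v(x) \geq v(c)$.

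For \textit{(2)}, I plan to argue by contradiction: if no standard $K$ works, then for each $K\in \N$ there are $w_{K}, w_{K}^{\tg}\in \cM$ with $v(w_{K}), v(w_{K}^{\tg}) \geq \underline{K}$, $\alpha(w_{K})$, and $\neg \alpha(w_{K}^{\tg})$. By saturation of $\cM$, the partial type $\set{v(y_{1}) \geq \underline{k},\; v(y_{2}) \geq \underline{k} : k \in \N}\cup \set{\alpha(y_{1}),\; \neg \alpha(y_{2})}$ is then realized by some pair $w^{*}, w^{*\tg}$, both with nonstandard valuation. From $\alpha(w^{*})$ we get $b$ with $\theta(b)$ and $\psi(x,b) \leftrightarrow v(x) \geq v(w^{*})$. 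Applying part \textit{(1)} to $b$ with $c := w^{*\tg}$ produces $b^{\tg}$ with $\mathrm{tp}(b^{\tg}/\emptyset) = \mathrm{tp}(b/\emptyset)$ (hence $\theta(b^{\tg})$) and $\psi(x, b^{\tg}) \leftrightarrow v(x) \geq v(w^{*\tg})$, contradicting $\neg \alpha(w^{*\tg})$.

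The main thing to get right is the observation in \textit{(1)} that $\eta(w)$ depends only on $v(w)$, so that its truth is controlled by the type of $v(w)$ in $\Gamma$; together with the uniqueness of the type of a nonstandard element of $\Gamma$, this is what transfers $\eta(a)$ to $\eta(c)$. Once this structural point is in place, the rest is routine saturation bookkeeping.
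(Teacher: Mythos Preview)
Your proof is correct, and part \textit{(2)} is essentially the paper's compactness argument written out in slightly more detail. Part \textit{(1)}, however, takes a genuinely different route from the paper.

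The paper proves \textit{(1)} by working entirely in the $Z$-sort: it shows directly that the partial type $\mathrm{tp}(a/\emptyset)\cup\{v(x)=v(c)\}$ is finitely satisfiable, using the explicit density statement of \lemref{Preservation of a solution} \textit{(4)} to produce a witness in each finite fragment. This yields $a'$ with $\mathrm{tp}(a'/\emptyset)=\mathrm{tp}(a/\emptyset)$ and $v(a')=v(c)$, and then an automorphism sends $b$ to the desired $b'$. Your argument instead passes to the $\Gamma$-sort: you observe that $\eta(w)$ factors through $v(w)$, and then invoke \thmref{QE} to conclude that any two nonstandard finite elements of $\Gamma$ have the same complete type over $\emptyset$ (since the only quantifier-free $\Gamma$-formulas in one variable are Boolean combinations of $S^k(u)\,\square\,\underline{m}$ and $S^k(u)=\infty$). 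This is cleaner and more conceptual---it isolates exactly the structural fact being used (uniqueness of the nonstandard type in $\Gamma$) and appeals to QE once, rather than reproving a special case by hand. The paper's approach, by contrast, is more self-contained: it uses only the explicit axioms and \lemref{Preservation of a solution}, without needing to analyze the $\Gamma$-sort or invoke the full QE result, which may be preferable if one wants to keep the dependency graph of the argument minimal.
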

\begin{proof}[Proof of claim]
~
Proof of (1). We show that we can find $a^{\tg}\in\mathcal{M}$ such that $tp(a^{\tg}/\emptyset)=tp(a/\emptyset)$
and $v(a^{\tg})=v(c)$. Indeed, let $\Sigma(x)$ be the partial type
$tp(a/\emptyset)\cup\{v(x)=v(c)\}$. We show that it is consistent.
Let $F\subseteq\Sigma(x)$ be a finite subset. As $v(a)$ is nonstandard,
we may assume that $F$ is of the form $$\{x\neq j\,:\,-n\leq j\leq n\}\cup\{D_{m_{k}}(x-r_{k})\,:\,1\leq k\leq s\}\cup\{v(x)=v(c)\}.$$
Let $m=\prod_{k}m_{k}$, and write $m=p^{l}  m^{\tg}$ with $\mbox{gcd}(m^{\tg},p)=1$.
By Lemma~\ref{lem:Preservation of a solution} \textit{(4)},
there exists $\tilde{a}\in\mathcal{M}$ satisfying the formula $D_{m^{\tg}}(x-a)\wedge(v(x)=v(c))$.
So $v(\tilde{a})=v(c)$ is nonstandard. As $v(a)$ is also nonstandard,
$\tilde{a}$ also satisfies $D_{p^{l}}(x-a)$, so it satisfies $D_{m}(x-a)$,
and therefore it satisfies $\{D_{m_{k}}(x-r_{k})\,:\,1\leq k\leq s\}$.
Also, as $v(\tilde{a})$ is nonstandard, $\tilde{a}\notin\Z$. Together
we have that $\tilde{a}$ satisfies $F$. \\
So $\Sigma(x)$ is consistent. Let $a^{\tg}\in\mathcal{M}$ be a realization
of $\Sigma(x)$. As $tp(a^{\tg}/\emptyset)=tp(a/\emptyset)$, there
is an automorphism of $L_{p}$-structures $\sigma\in Aut(\mathcal{M}/\emptyset)$
such that $\sigma(a)=a^{\tg}$. Let $b^{\tg}=\sigma(b)$. So $tp(b^{\tg}/\emptyset)=tp(b/\emptyset)$
and $\psi(x,b^{\tg})$ is equivalent to $v(x)\geq v(a^{\tg})$. As
$v(a^{\tg})=v(c)$, we have what we want. \\
Proof of (2). Let $\xi(w,w^{\tg})$ be the formula defined by $\alpha(w)\rightarrow\alpha(w^{\tg})$.
By (\ref{enu:finishing_claim_valuation_part_1}), $\xi(a,c)$ holds
for any $a,c$ such that $v(a)$ and $v(c)$ are nonstandard, so the
result follows by compactness.
\end{proof}
~

Now, let $\theta(z)$ be the formula expressing that $(\phi(x,z) ,+)$ is
a subgroup. By \lemref{Characterizing_definable_subgroups} there
are $n_{1},\dots,n_{k}$, having $\mbox{gcd}(n_{i},p)=1$ for each $i$,
such that for all $c\in\mathcal{M}$ for which $\theta(c)$ holds,
$\phi(x,c)$ is equivalent to a formula of the form $D_{n_{i}}(x)\wedge v(x)\geq v(d)$
for some $i$ and some $d\in\mathcal{M}$. As $(N,+,0,|_{p})$ is
an elementary substructure, if $c\in N$ then there exists such $d\in N$.
Let $n=\prod_{i}n_{i}$, and let $\psi(x,z)$ be the formula $\phi(nx,z)$.
Then for all $c\in\mathcal{M}$ for which $\theta(c)$ holds, $\psi(x,c)$
is equivalent to $v(x)\geq v(d)$, for the same $d$ corresponding
to $\phi(x,c)$ (as $v(n)=0$). 

Let $K\in\N$ be as given by the claim for $\psi(x,z)$ and $\theta(z)$,
and let $\alpha(w)$ and $\chi(w)$ be as in the claim. 
We have that $\psi(x,b)$ is equivalent
to $v(x)\geq\gamma$. In particular, the formula $\rho(z)$ defined
by 
\[
\theta(z)\wedge\exists w(v(w)\geq K\wedge\forall x(\psi(x,z)\leftrightarrow v(x)\geq v(w)))
\]
is satisfied by $b$. Since $\rho(z)$ contains no parameters, there
exists $c\in N$ such that $(N,+,0,|_{p})\vDash\rho(c)$. So $\theta(c)$
holds and there exists $d\in N$ such that $v(d)\geq K$ and $\psi(x,c)$
is equivalent to $v(x)\geq v(d)$. So $(N,+,0,|_{p})\vDash\alpha(d)$.
As $v(d)\geq K$, by the claim we have $\mathcal{M}\vDash\chi(d)$.
Since $\chi(w)$ contains no parameters, also $(N,+,0,|_{p})\vDash\chi(d)$.
Hence, as $v_{p}$ is surjective, for every $\gamma\in\Gamma(N)$
such that $\gamma\geq K$ there exists $c_{\gamma}\in N$ such that
$\theta(c_{\gamma})$ holds and $\psi(x,c_{\gamma})$ is equivalent
to $v(x)\geq\gamma$.

Let $\delta(x,y)$ be the formula 
\[
\bigwedge_{k=1}^{K-1}(D_{p^{k}}(x)\rightarrow D_{p^{k}}(y))\wedge \forall z(\theta(z)\rightarrow(\psi(x,z)\rightarrow\psi(y,z)))\mbox{.}
\]
Then $\delta(x,y)$ is $L$-definable over $\emptyset$, and we claim
that it defines $v(x)\leq v(y)$ in $\cN$: Let $a_{1},a_{2}\in N$,
and suppose $v(a_{1})\leq v(a_{2})$. Then of course $\bigwedge_{k=1}^{K-1}(D_{p^{k}}(a_{1})\rightarrow D_{p^{k}}(a_{2}))$.
Let $c\in N$ such that $\theta(c)$. Then there exists $d\in N$
such that $\psi(x,c)$ is equivalent to $v(x)\geq v(d)$, and therefore
also $\psi(a_{1},c)\rightarrow\psi(a_{2},c)$. So we have $\delta(a_{1},a_{2})$.
On the other hand, suppose $\delta(a_{1},a_{2})$. If $v(a_{1})\leq K-1$,
then by $\bigwedge_{k=1}^{K-1}(D_{p^{k}}(a_{1})\rightarrow D_{p^{k}}(a_{2}))$
we get $v(a_{1})\leq v(a_{2})$. Otherwise, we have that $\gamma:=v(a_{1})\geq K$
and hence $\psi(a_{1},c_{\gamma})$. From $\forall z(\theta(z)\rightarrow(\psi(a_{1},z)\rightarrow\psi(a_{2},z)))$,
as $\theta(c_{\gamma})$ holds, we get in particular $\psi(a_{1},c_{\gamma})\rightarrow\psi(a_{2},c_{\gamma})$,
and therefore we get $\psi(a_{2},c_{\gamma})$, which means $v(a_{2})\geq\gamma=v(a_{1})$.
Therefore, $v(x)\leq v(y)$ is definable over $\emptyset$ in $\cN$.
\end{proof}
~

Combined with \factref{conant_pillay_stable_expansions_finite_dp_rank}
and \thmref{our_result_dp_rank}, we obtain \thmref{our_result_full_p_adic_elementary_extension_version}
and \corref{our_result_full_p_adic}:
\begin{proof}[Proof of \thmref{our_result_full_p_adic_elementary_extension_version}]
Identical to the proof of \corref{conant_result_order_elementary_extension_version}
from \thmref{conant_result_order_unstable_elementary_extension_version}.
\end{proof}
~
\begin{proof}[Proof of \corref{our_result_full_p_adic}]
Identical to the proof of \thmref{conant_result_order} from \corref{conant_result_order_elementary_extension_version}.
\end{proof}
~

\section{Intermediate structures
in elementary extensions}\label{sec:counterexamples_elementary_extensions}

In this section, we show that \factref{conant_pillay_stable_expansions_finite_dp_rank},
\thmref{conant_result_order} and \corref{our_result_full_p_adic}
are no longer true if we replace $\mathbb{Z}$ by an elementarily
equivalent structure. In the case of \corref{our_result_full_p_adic},
there are both stable and unstable counterexamples. For \thmref{conant_result_order}
there are unstable counterexamples, but we do not know whether there
are stable ones.

~

For each of the above we give a family of counterexamples. 
~
\begin{prop}
\label{prop:counterexample_stable_dp_minimal_expansion_of_N_equiv_Z}Let
$(N,+,0,1,|_{p})$ be a nontrivial elementary extension of $(\mathbb{Z},+,0,1,|_{p})$,
let $b\in N$ be such that $\gamma:=v_{p}(b)$ is nonstandard, and let
$B=\{a\in N\,:\, v_{p}(a)\geq\gamma\}$. Then
$(N,+,0,1,B)$ is a stable proper expansion of $(N,+,0,1)$ of dp-rank
$1$. In particular, it is a proper reduct of $(N,+,0,1,|_{p})$.\end{prop}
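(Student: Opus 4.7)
The plan is to prove four things: that $B$ is not definable in $(N,+,0,1)$; that $\mbox{dp-rank}(N,+,0,1,B)=1$; that $(N,+,0,1,B)$ is stable; and as a consequence of stability, that it is a proper reduct of $(N,+,0,1,|_{p})$.

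For non-definability of $B$: since $(N,+,0,1)$ is Presburger arithmetic without order and eliminates quantifiers in $\{+,-,0,1,(D_{n})_{n\geq1}\}$, the only subgroups of $N$ definable in this structure (even with parameters) are the $nN$ for $n\in\N$. On the other hand, $B$ is a subgroup, and by Axiom~\ref{enu:axiom_exactly_p^k_possible_extensions} of \propref{the_axioms} applied to the ball of radius $0$, we have $[N:B]=p^{\gamma}$ with $\gamma$ nonstandard, so $[N:B]$ exceeds every standard integer and hence $B\neq nN$ for any $n\in\N$. For the dp-rank: since $B(x)\leftrightarrow b|_{p}x$, the structure $(N,+,0,1,B)$ is a reduct of the expansion $(N,+,0,1,|_{p},b)$ by the constant $b$; naming a single constant preserves dp-rank, and by \thmref{our_result_dp_rank} the dp-rank of $(N,+,0,1,|_{p})$ is $1$, so $\mbox{dp-rank}(N,+,0,1,B)\leq1$, with equality because the structure is infinite.

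The main step is stability. My plan is to view $(N,+,B)$ as an abelian group together with a distinguished subgroup---equivalently, as a two-sorted abelian-group structure $(N,N/B,\pi)$ induced by the short exact sequence $0\to B\to N\to N/B\to 0$---and invoke the classical Baur--Monk theorem that the theory of any (multi-sorted) module is stable via positive-primitive elimination. Expansion by the constant $1$ and the predicates $D_{n}$, which are parameter-free subgroup/coset conditions, preserves stability. The main technical obstacle is to verify that every atomic formula of $(N,+,0,1,B)$ reduces to a Boolean combination of pp-formulas over the two sorts; the key point is that the atomic formula $B(nx+my-c)$ is the pullback along $\pi$ of the equation $n\pi(x)+m\pi(y)=\pi(c)$ in $N/B$, and the $D_{n}$ descend similarly.

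With stability in hand, the last claim is immediate: the formula $x|_{p}y$ defines a total preorder on $N$ whose value set $\Gamma(N)$ is infinite, so $(N,+,0,1,|_{p})$ has the order property and is unstable, whereas $(N,+,0,1,B)$ is stable. The two structures cannot be interdefinable; combined with the fact that $B$ is definable in $(N,+,0,1,|_{p})$ via the parameter $b$, this shows that $(N,+,0,1,B)$ is a proper reduct of $(N,+,0,1,|_{p})$.
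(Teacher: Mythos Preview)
Your proof is correct and follows essentially the same line as the paper's. The paper handles stability by directly citing Wagner's result that any \emph{abelian structure} (an abelian group equipped with predicates for subgroups of cartesian powers) is stable; since $B$ is a subgroup of $N$, the structure $(N,+,0,1,B)$ qualifies and stability is immediate. Your route through the two-sorted short-exact-sequence presentation $(N,N/B,\pi)$ and Baur--Monk pp-elimination is the same argument under the hood---Wagner's theorem is precisely the extension of Baur--Monk from modules to abelian structures---but the direct citation avoids setting up and verifying the bi-interpretation. You also supply detail the paper omits: an explicit reason why $B$ is not $(N,+,0,1)$-definable (infinite index versus the finite-index subgroups $nN$), and the observation that properness as a reduct of $(N,+,0,1,|_p)$ follows from stability since $|_p$ gives the order property.
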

\begin{proof}
 It is clear that $(N,+,0,1,B)$ is a proper expansion of $(N,+,0,1)$,
and, as a reduct of $(N,+,0,1,|_{p})$, by \thmref{our_result_dp_rank}
it is of dp-rank $1$. It remains to show stability. In \cite[Example 0.3.1 and Theorem 4.2.8]{Wagner_1997},
Wagner defines an abelian structure to be an abelian group together
with some predicates for subgroups of powers of this group. Every
module is an abelian structure. Wagner proves that, as with modules,
in an abelian structure every definable set is equal to a boolean
combination of cosets of $acl(\emptyset)$-definable subgroups. As
a consequence, every abelian structure is stable. Under the assumptions
of \propref{counterexample_stable_dp_minimal_expansion_of_N_equiv_Z},
$B$ is a subgroup of $N$, so $(N,+,0,1,B)$ is an abelian structure, hence stable.
 \end{proof}

~

Let $(N,+,0,1,|_{p})$ be a nontrivial elementary extension of $(\mathbb{Z},+,0,1,|_{p})$. For $\gamma\in \Gamma$ we define $$C_\gamma =\set{(a,b)\in N^{2}\,:\, v_{p}(a)\leq\gamma\wedge v_{p}(b)\leq\gamma\wedge v_{p}(a)\leq v_{p}(b)}.$$

\begin{prop}
\label{prop:counterexample_unstable_valuation_intermediate}
There is a nontrivial elementary extension $(N,+,0,1,|_{p})$ of $(\mathbb{Z},+,0,1,|_{p})$ and a nonstandard $\gamma\in \Gamma$ such that $(N,+,0,1,C_\gamma)$ is an unstable expansion of $(N,+,0,1)$ and a proper reduct of $(N,+,0,1,|_p)$.
\end{prop}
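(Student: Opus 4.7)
The plan is to take $N$ to be an $\aleph_1$-saturated elementary extension of $(\Z,+,0,1,|_p)$ and let $\gamma$ be any nonstandard element of $\Gamma(N)$. Three of the four requirements are immediate. The structure $(N,+,0,1,C_\gamma)$ is an expansion of $(N,+,0,1)$ because $C_\gamma$ witnesses instability whereas $(N,+,0,1)$ is stable: the standard sequence $(p^i)_{i\in\N}$ satisfies $C_\gamma(p^i,p^j)\leftrightarrow i\leq j$ (since $\gamma$ is nonstandard, so $i,j\leq\gamma$ automatically), giving an infinite linear order. The structure is a reduct of $(N,+,0,1,|_p)$ because, choosing any $b\in N$ with $v_p(b)=\gamma$, one has $C_\gamma(x,y)\leftrightarrow x|_p b\wedge y|_p b\wedge x|_p y$.

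The main task is to show properness of the reduct, i.e., that $|_p$ is not $(N,+,0,1,C_\gamma)$-definable even with parameters from $N$. Suppose for contradiction that some $(N,+,0,1,C_\gamma)$-formula $\phi(x,y,\bar{c})$ defined $|_p$. I would derive a contradiction by exhibiting pairs $(a_1,a_2)$ and $(b_1,b_2)$ in $N$ with $v_p(a_1)<v_p(a_2)$ but $v_p(b_2)<v_p(b_1)$ sharing the same type over $\bar{c}$ in the reduct. By $\aleph_1$-saturation this type equality extends to an automorphism $\sigma$ of $(N,+,0,1,C_\gamma)$ fixing $\bar{c}$ with $\sigma(a_i)=b_i$. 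Since $\sigma$ preserves $\phi(x,y,\bar{c})$ it would preserve $|_p$; but $a_1|_p a_2$ holds while $b_1|_p b_2$ fails, a contradiction.

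To construct the four elements, I would use saturation to choose $\alpha_1<\alpha_2$ and $\beta_2<\beta_1$ in $\Gamma(N)$ all exceeding $\gamma+n$ for every standard $n$ and exceeding $v_p(t(\bar{c}))$ for every $\Z$-linear term $t(\bar{c})$ of finite $v_p$-value, and then pick $a_1,a_2,b_1,b_2$ realizing these valuations and all congruent to each other modulo every standard $m$ coprime to $p$ (using Lemma \ref{lem:Preservation of a solution}). The guiding heuristic is that $C_\gamma$ is ``blind'' above $\gamma$: for any $\Z$-linear combination $s=n_1 a_i+n_2 a_j+r(\bar{c})$ with at least one $n_k\neq 0$, either $v_p(s)=v_p(r(\bar{c}))$ (when the latter is $\leq\gamma$, since $v_p(n_1 a_i+n_2 a_j)>\gamma$ by the size assumption) or $v_p(s)>\gamma$, and these alternatives depend only on $\bar{c}$ and the divisibility-congruence data that the four elements share. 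Consequently, every atomic $C_\gamma$- or $D_m$-fact about $(a_1,a_2,\bar{c})$ agrees with the corresponding fact about $(b_1,b_2,\bar{c})$.

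The main obstacle is that we have no quantifier elimination for the reduct, so the type equality cannot be reduced to atomic formulas. I would close the gap by a back-and-forth argument: given any additional element $e$ to extend $(a_1,a_2,\bar{c})$, find $e'$ extending $(b_1,b_2,\bar{c})$ with the same quantifier-free type over the extended tuple, using again that any fresh element can be freely adjusted (by an element of valuation exceeding $\gamma$ and the current parameters) without changing its $C_\gamma$- and $D_m$-relations. This symmetric extension property, combined with saturation, yields the desired automorphism of $(N,+,0,1,C_\gamma)$ fixing $\bar{c}$ and swapping the ordered pair $(a_1,a_2)$ with the reverse-ordered pair $(b_1,b_2)$, completing the contradiction.
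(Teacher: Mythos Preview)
Your approach is quite different from the paper's. The paper builds one specific example by an ultraproduct: for each standard $m$, the relation $C_m=\{(a,b)\in\Z^2:v_p(a),v_p(b)\leq m,\ v_p(a)\leq v_p(b)\}$ is already $\emptyset$-definable in $(\Z,+,0,1)$ (a boolean combination of the predicates $D_{p^k}$ for $k\leq m+1$). A nonprincipal ultraproduct of the structures $(\Z,+,0,1,|_p,C_m)$ produces some $(N,+,0,1,|_p,C_\gamma)$ with $\gamma$ nonstandard; if $|_p$ were definable from $C_\gamma$ by a formula $\phi(x,y,d)$, {\L}o\'s would give an $m$ with $|_p$ definable in $(\Z,+,0,1,C_m)$, hence in $(\Z,+,0,1)$, a contradiction. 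No analysis of types in the reduct is needed. Your route, by contrast, aims directly at an automorphism of the reduct that breaks $|_p$, and if it goes through it yields the stronger statement (any elementary extension, any nonstandard $\gamma$) that the paper only records as a remark.

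The setup and the quantifier-free type agreement of $(a_1,a_2)$ and $(b_1,b_2)$ over $\bar c$ are fine. The gap is in the back-and-forth step. Your heuristic --- that perturbing by an element of valuation $>\gamma$ leaves all $C_\gamma$- and $D_m$-relations unchanged --- only says such perturbations are harmless; it does not tell you how to \emph{produce} the required $e'$ over $(b_1,b_2,\bar c,\bar d')$ matching an arbitrary $e$ over $(a_1,a_2,\bar c,\bar d)$. After even one round the accumulated parameters $\bar d,\bar d'$ may have valuations $\leq\gamma$, and the $C_\gamma$-constraints on $e'$ amount to prescribing the entire profile of truncated valuations $\min(v_p(ne'-t(\bar b,\bar d')),\gamma{+}1)$ over all integer combinations, together with $D_m$-residues and (non)equalities. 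Realising such a profile is plausible --- one expects it to follow from quantifier elimination in the ambient $T_p$, since the valuation constraints of level $\leq\gamma$ assemble into a coherent system of balls and the constraints of level $>\gamma$ collapse --- but this is genuine work that your sketch does not do, and it does not reduce to a single ``adjustment'' of $e$. Without it, the type equality (and hence the automorphism) is not established.
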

\begin{proof}
For each $m\in\N$, let 
\begin{align*}
C_m &= \set{(a,b)\in\Z^{2}\,:\, a|_{p}p^{m}\wedge b|_{p}p^{m}\wedge a|_{p}b}\\
&=\{(a,b)\in\Z^{2}\,:\,\neg D_{p^{m+1}}(a)\wedge\neg D_{p^{m+1}}(b)\wedge\bigwedge_{i=1}^{m}(D_{p^{i}}(a)\rightarrow D_{p^{i}}(b))\}.
\end{align*}
Let $\cZ_{m}=(\Z,+,0,1,|_{p},C_{m})$. Let $\mathcal{U}$ be a
non-principal ultrafilter on $\N$, and let $\cN=\prod_{\mathcal{U}}\cZ_{m}=(N,+,0,1,|_{p},C)$.
Let $\psi(z)$ be the formula $\forall x,y(C(x,y)\leftrightarrow x|_{p}z\wedge y|_{p}z\wedge x|_{p}y)$.
For any $m\geq k\geq0$, $\cZ_{m}\models \exists z\psi(z) \wedge\forall z(\psi(z)\rightarrow p^{k}|_{p}z)$,
and therefore also $\cN\models \exists z\psi(z)\wedge\forall z(\psi(z)\rightarrow p^{k}|_{p}z)$.
Hence there exists $c\in N$ such that $\gamma:=v_{p}(c)$ is nonstandard
and $C=C_\gamma$.

Suppose for a contradiction that $|_{p}$ is definable in $(N,+,0,1,C)$.
Then there is a formula $\phi(x,y,z)$ in the language of $(N,+,0,1,C)$
with $|x|=|y|=1$, and there is $d\in N$, such that $\cN\models\forall x,y(x|_{p}y\leftrightarrow\phi(x,y,d))$.
Let $(d_{m})_{m\in\N}$ be a representative for $d$ mod $\mathcal{U}$.
Then there exists $m\in\N$ such
that $\cZ_{m}\models\forall x,y(x|_{p}y\leftrightarrow\phi(x,y,d_{m}))$.
Hence $|_{p}$ is definable in $(\Z,+,0,1,C_{m})$. But $C_{m}$ is
definable in $(\Z,+,0,1)$, a contradiction. 

It is clear
that $(N,+,0,1,C)$ is an unstable proper expansion of $(N,+,0,1)$.
\end{proof}
~
\begin{prop}
\label{prop:counterexample_unstable_order_intermediate}
There is a nontrivial elementary extension $(N,+,0,1,<)$ of $(\Z,+,0,1,<)$, and a positive nonstandard $b\in N$, such that $(N,+,0,1,[0,b])$ is an unstable expansion of $(N,+,0,1)$ and a proper reduct of $(N,+,0,1,<)$.
\end{prop}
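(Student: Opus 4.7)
My plan is to mirror the ultraproduct construction used in the proof of \propref{counterexample_unstable_valuation_intermediate}. For each $m\in\N$ I would set $C_m=\set{a\in\Z \,:\, 0\leq a\leq m}$, which is $\emptyset$-definable in $(\Z,+,0,1,<)$, and define $\cZ_m=(\Z,+,0,1,<,C_m)$. Fixing a non-principal ultrafilter $\mathcal U$ on $\N$, I would form $\cN=\prod_{\mathcal U}\cZ_m=(N,+,0,1,<,C)$ and take $b\in N$ to be the class of the sequence $(m)_{m\in\N}$; clearly $b>\underline{k}$ for every $k\in\N$, so $b$ is positive nonstandard. Writing $\psi(z):=\forall x(C(x)\leftrightarrow 0\leq x\wedge x\leq z)$ and noting that $\cZ_m\models\psi(m)$ for every $m$, \L o\'s's theorem yields $\cN\models\psi(b)$, so $C=[0,b]^{N}$.

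The main step will be to show that $<$ is not definable in the reduct $(N,+,0,1,C)$. Suppose to the contrary that $\phi(x,y,z)$ is a $\set{+,0,1,C}$-formula and $d\in N$ satisfies $\cN\models\forall x,y(x<y\leftrightarrow\phi(x,y,d))$. Choosing a representative $(d_m)_{m\in\N}$ of $d$ modulo $\mathcal U$, \L o\'s's theorem produces some $m\in\N$ with $\cZ_m\models\forall x,y(x<y\leftrightarrow\phi(x,y,d_m))$. But $C_m=\set{0,1,\dots,m}$ is finite, hence $\emptyset$-definable in $(\Z,+,0,1)$ via $\bigvee_{i=0}^{m}x=\underline{i}$; therefore $(\Z,+,0,1,C_m)$ is $\emptyset$-interdefinable with the stable structure $(\Z,+,0,1)$, in which $<$ is famously not definable. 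This is the required contradiction.

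To conclude I would verify that $(N,+,0,1,C)$ is an expansion of $(N,+,0,1)$ (trivial) and a reduct of $(N,+,0,1,<)$, the latter because $C=[0,b]$ is defined by $0\leq x\wedge x\leq b$ using the parameter $b$; properness of this reduct is precisely what was just established. For unstability, the formula $\theta(x,y):=C(y-x)$ restricted to the standard copy $\N\subseteq N$ is equivalent to $0\leq j-i\leq b$, which, since $b$ is nonstandard, simplifies to $i\leq j$, giving an infinite linear order and hence the order property for $\theta$. I do not anticipate a serious obstacle: the situation is even easier than \propref{counterexample_unstable_valuation_intermediate}, because each $C_m$ is a finite set and so manifestly $\emptyset$-definable in $(\Z,+,0,1)$.
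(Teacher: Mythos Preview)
Your proposal is correct and follows essentially the same approach as the paper: an ultraproduct of the structures $(\Z,+,0,1,<,[0,m])$ over a non-principal ultrafilter, followed by a \L o\'s argument to see that definability of $<$ in $(N,+,0,1,C)$ would force $<$ to be definable in some $(\Z,+,0,1,[0,m])$, which is impossible since $[0,m]$ is finite. The only cosmetic differences are that the paper obtains $b$ by an existence sentence rather than as the class of $(m)_m$, and phrases instability as ``$C(y-x)$ defines the ordering on $C$'' rather than via the standard copy of $\N$.
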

\begin{proof}
For each $m\in\N$, let $B_{m}=[0,m]=\{0,1,\dots,m\}$, and let
$\cZ_{m}=(\Z,+,0,1,<,B_{m})$. Let $\cN=\prod_{\mathcal{U}}\cZ_{m}=(N,+,0,1,<,B)$
be the ultraproduct of $\{\cZ_{m}\}_{m}$ with respect to some nonprincipal ultrafilter $\mathcal{U}$ over $\N$.
For any $m\geq k\geq 0$, $\cZ_{m}\models\exists!x(\forall y(B_m(y)\leftrightarrow0\leq y\leq x)\wedge x\geq\underline{k})$
and therefore also $\cN\models\exists!x(\forall y(B(y)\leftrightarrow0\leq y\leq x)\wedge x\geq\underline{k})$.
Hence there exists a positive nonstandard element $b\in N$ such that
$B=[0,b]$. 
Suppose for a contradiction that $<$ is definable in $(N,+,0,1,B)$.
Then there is a formula $\phi(x,y,z)$ in the language of $(N,+,0,1,B)$
with $|x|=|y|=1$, and there is $c\in N$, such that $\cN\models\forall x,y(x<y\leftrightarrow\phi(x,y,c))$.
Let $(c_{m})_{m\in\N}$ be a representative for $c$ mod $\mathcal{U}$.
Then there exists $m\in\N$ such
that $\cZ_{m}\models\forall x,y(x<y\leftrightarrow\phi(x,y,c_{m}))$.
Hence $<$ is definable in $(\Z,+,0,1,B_{m})$, a contradiction. 
 It is clear
that $(N,+,0,1,B)$ is a proper expansion of $(N,+,0,1)$. The formula
$B(y-x)$ defines the ordering on $B$, so this structure is unstable.
\end{proof}
~
\begin{rem}
The conclusions of Propositions~\ref{prop:counterexample_unstable_valuation_intermediate} and~\ref{prop:counterexample_unstable_order_intermediate} in fact hold for \emph{any} nontrivial elementary extension and nonstandard $\gamma\in \Gamma$ or positive nonstandard $b\in N$, respectively. In both cases, this can be proved by showing that any structure of this form is sufficiently elementarily equivalent to the specific examples in Propositions~\ref{prop:counterexample_unstable_valuation_intermediate} and~\ref{prop:counterexample_unstable_order_intermediate}. We leave this as an exercice.
\end{rem}

\subsection*{Acknowledgements: }

This paper is part of the first named author\textquoteright s M.Sc., written
at the Hebrew University of Jerusalem under the supervision of Itay
Kaplan, and part of the second named author\textquoteright s Ph.D., written
at Université Claude Bernard Lyon 1, at the time under the supervision
of Pierre Simon. We would like to thank both of them for their useful
suggestions and feedback. In particular, we thank Pierre Simon for
introducing the idea for proving Conant\textquoteright s theorem,
and we thank Itay Kaplan for suggesting the proofs in \textsection ~\ref{sec:counterexamples_elementary_extensions},
and for carefully reading this paper and pointing out mistakes and
inaccuracies. Finally we would like to thank the referee for their tremendous work. Their numerous corrections, suggestions, and advices played a crucial role in making this article readable.

\bibliographystyle{plain}
\bibliography{intermediate_structures}

\begin{thebibliography}{10}

\bibitem{Aschenbrenner_et_al_II_2013}
Matthias Aschenbrenner, Alf Dolich, Deirdre Haskell, Dugald Macpherson, and
  Sergei Starchenko.
\newblock Vapnik{\textendash}chervonenkis density in some theories without the
  independence property, {II}.
\newblock {\em Notre Dame Journal of Formal Logic}, 54(3-4):311--363, 2013.

\bibitem{Aschenbrenner_et_al_I_2015}
Matthias Aschenbrenner, Alf Dolich, Deirdre Haskell, Dugald Macpherson, and
  Sergei Starchenko.
\newblock Vapnik-chervonenkis density in some theories without the independence
  property, i.
\newblock {\em Transactions of the American Mathematical Society},
  368(8):5889--5949, dec 2015.

\bibitem{Conant_Sparsity_2017}
Gabriel Conant.
\newblock Stability and sparsity in sets of natural numbers.
\newblock {\em arXiv:1701.01387 [math.LO]}, 2017.

\bibitem{Conant_No_Intermediate_2016}
Gabriel Conant.
\newblock There are no intermediate structures between the group of integers
  and presburger arithmetic.
\newblock {\em Journal of Symbolic Logic}, 83:187--207, 2018.

\bibitem{ConantPillay2016}
Gabriel Conant and Anand Pillay.
\newblock Stable groups and expansions of $(\mathbb{Z},+,0)$.
\newblock {\em arXiv:1601.05692v2 [math.LO]}, 2016.

\bibitem{DolichGoodrick2015}
Alfred Dolich and John Goodrick.
\newblock Strong theories of ordered abelian groups.
\newblock {\em arXiv:1511.08274 [math.LO]}, 2015.

\bibitem{Dolich2009}
Alfred Dolich, John Goodrick, and David Lippel.
\newblock Dp-minimality: basic facts and examples.
\newblock {\em arXiv:0910.3189v2 [math.LO]}, 2009.

\bibitem{Guignot2012}
F.~Guignot.
\newblock {\em Th\'{e}orie des mod\`{e}les du groupe valu\'{e} $(\mathbb{Z},+,
  v_p)$. S\'{e}minaires de structures alg\'{e}briques ordonn\'{e}es 2012-2013,
  No. 89}.

\bibitem{KaplanShelahPrimes2016}
Itay Kaplan and Saharon Shelah.
\newblock Decidability and classification of the theory of integers with
  primes.
\newblock {\em arXiv:1601.07099 [math.LO]}, 2016.

\bibitem{LambottePoint2017}
Quentin Lambotte and Fran\c{c}oise Point.
\newblock On expansions of $(\mathbf{Z},+,0)$.
\newblock {\em arXiv:1702.04795 [math.LO]}, 2017.

\bibitem{Mariaule}
Nathana\"{e}l Mariaule.
\newblock The field of $p$-adic numbers with a predicate for the powers of an
  integer.
\newblock Unpublished.

\bibitem{Marker_1990}
David Marker.
\newblock Semialgebraic expansions of c.
\newblock {\em Transactions of the American Mathematical Society}, 320(2):581,
  aug 1990.

\bibitem{Palacin2014}
Daniel Palacin and Rizos Sklinos.
\newblock On superstable expansions of free abelian groups.
\newblock {\em arXiv:1405.0568v3 [math.LO]}, 2014.

\bibitem{Pillay_1987}
Anand Pillay and Charles Steinhorn.
\newblock Discrete o-minimal structures.
\newblock {\em Annals of Pure and Applied Logic}, 34(3):275--289, jun 1987.

\bibitem{Poizat_2000}
Bruno Poizat.
\newblock {\em A Course in Model Theory}.
\newblock Springer New York, 2000.

\bibitem{Poizat_2014}
Bruno Poizat.
\newblock Superg{\'{e}}n{\'{e}}rix.
\newblock {\em Journal of Algebra}, 404:240--270, feb 2014.

\bibitem{Shelah1990}
S.~Shelah.
\newblock {\em Classification Theory}.
\newblock Elsevier Science, 1990.

\bibitem{Simon_2015}
Pierre Simon.
\newblock {\em A Guide to {NIP} Theories}.
\newblock Cambridge University Press ({CUP}), 2015.

\bibitem{Wagner_1997}
F.~Wagner.
\newblock {\em Stable Groups}.
\newblock Cambridge University Press, 1997.

\end{thebibliography}

\end{document}